\numberwithin{equation}{section}
\theoremstyle{plain}
\newaliascnt{theorem}{equation}
\newtheorem{thm}[theorem]{Theorem}
\newaliascnt{prop}{equation}
\newtheorem{prop}[prop]{Proposition}
\newaliascnt{lemma}{equation}
\newtheorem{lemma}[lemma]{Lemma}
\newaliascnt{corollary}{equation}
\newtheorem{corollary}[corollary]{Corollary}
\theoremstyle{definition}
\newaliascnt{remark}{equation}
\newtheorem{remark}[remark]{Remark}
\newaliascnt{defn}{equation}
\newtheorem{defn}[defn]{Definition}
\crefname{theorem}{Theorem}{Theorems}
\crefname{prop}{Proposition}{Propositions}
\crefname{section}{Section}{Sections}
\crefname{subsection}{Section}{Sections}
\crefname{defn}{Definition}{Definitions}
\crefname{remark}{Remark}{Remarks}
\crefname{lemma}{Lemma}{Lemmas}
\crefname{corollary}{Corollary}{Corollaries}
\newcommand{\mb}[1]{\mathbf{#1}}
\DeclareMathOperator{\Ad}{Ad}
\DeclareMathOperator{\tr}{tr}
\DeclareMathOperator{\Sym}{Sym}
\newcommand{\wt}{\widetilde}
\newcommand{\wh}{\widehat}
\newcommand{\ot}{\otimes}
\newcommand{\ol}[1]{\overline{#1}}
\newcommand{\comments}[1]{}
\def\al{\alpha}
\def\be{\beta}
\def\de{\delta}
\def\De{\Delta}
\def\io{\iota}
\def\la{\lambda}
\def\th{\theta}
\def\si{\sigma}
\def\om{\omega}
\def\Om{\Omega}
\def\ep{\epsilon}
\def\Th{\Theta}
\def\Ga{\Gamma}
\def\Bb{{\mathcal B}}
\def\Dd{{\mathcal D}}
\def\Gg{{\mathcal G}}
\def\Ll{{\mathcal L}}
\def\Pp{{\mathcal P}}
\def\Qq{{\mathcal Q}}
\def\Uu{{\mathcal U}}
\def\CC{{\mathbb C}}
\def\NN{{\mathbb N}}
\def\ZZ{{\mathbb Z}}
\def\bb{{\mathfrak b}}
\def\dd{{\mathfrak d}}
\newcommand{\fg}{\mathfrak g}
\def\hh{{\mathfrak h}}
\def\ll{{\mathfrak l}}
\def\nn{{\mathfrak n}}
\def\ss{{\mathfrak s}}
\def\sl{\ss\ll}
\begin{document}

\title[Double Bruhat Cells and Integrable Systems]{Double Bruhat Cells in Kac-Moody Groups and Integrable Systems}

\subjclass[2010]{Primary 17B80, Secondary 20G44}
\keywords{Kac-Moody groups, integrable systems, Poisson geometry}

\author{Harold Williams}
\address{Harold Williams\newline
University of California, Berkeley\newline
Department of Mathematics\newline
Berkeley CA 94720\newline
USA}
\email{harold@math.berkeley.edu}

\begin{abstract}

We construct a family of integrable Hamiltonian systems generalizing the relativistic periodic Toda lattice, which is recovered as a special case.  The phase spaces of these systems are double Bruhat cells corresponding to pairs of Coxeter elements in the affine Weyl group.  In the process we extend various results on double Bruhat cells in simple algebraic groups to the setting of Kac-Moody groups.  We also generalize some fundamental results in Poisson-Lie theory to the setting of ind-algebraic groups, which is of interest beyond our immediate applications to integrable systems.

\end{abstract}

\maketitle

\section{Introduction}

This paper describes a class of completely integrable Hamiltonian systems generalizing the relativistic periodic Toda lattice, introduced in \cite{Ruijsenaars1990}.  We identify the phase space of this particular system with a double Bruhat cell of the $A_n^{(1)}$ affine Kac-Moody group, and its Hamiltonians with restrictions of invariant functions.   This refines the well-known observation that it admits a Lax form which is Hamiltonian with respect to the Poisson-Lie bracket induced by the trigonometric $r$-matrix \cite{Suris1991}.  A larger family of systems can then be obtained by transporting the construction to other double Bruhat cells and other groups.  On a general double Bruhat cell the invariant functions will not necessarily restrict to a maximal set of Poisson-commuting functions, but we show that a sufficient condition for this is that the cell correspond to a pair of Coxeter elements in the affine Weyl group.  This construction generalizes that of \cite{Hoffmann2000}, which treated semisimple algebraic groups and where the term Coxeter-Toda lattice was introduced for the resulting systems.   

The double Bruhat cells of a semisimple algebraic group are fundamental objects in Poisson-Lie theory, total positivity, and the theory of cluster algebras \cite{Fomin1999,Berenstein2005}.  Our construction requires an extension of various results on these cells to the setting of Kac-Moody groups.  In particular, we show that the double Bruhat cells of a symmetrizable Kac-Moody group are smooth finite-dimensional Poisson varieties equipped with distinguished factorization coordinates generalizing those of \cite{Fomin1999}.  

One of the main challenges in this extension is the absence of an adequate foundation for the Poisson-Lie theory of ind-algebraic groups.  Accordingly, we include a self-contained treatment of the necessary infinite-dimensional Poisson-Lie theory, which is of interest beyond our immediate application.  Though Poisson brackets on loop groups have a long history in mathematical physics, they are often dealt with less precisely than their finite-dimensional counterparts.  Our results provide one general framework in which they can be treated rigorously.

Another family of generalized relativistic Toda systems was introduced recently in \cite{Eager2011}.  These systems are constructed from certain periodic dimer models, following \cite{Goncharov2011}.  Their phase spaces are moduli spaces of flat line bundles on a bipartite graph, and their Hamiltonians are derived from the dimer partition function.  As shown in \cite{Fock2012}, this provides a complementary description of the systems we construct in type $A_n^{(1)}$.  We note in passing that applications of relativistic Toda systems to gauge theory (as in \cite{Nekrasov1998}) are the principal motivation for their study in \cite{Eager2011}.

The connection of relativistic Toda systems with double Bruhat cells lets us exploit the combinatorial structure of the latter in a number of ways.  For example, we introduce and make essential use of the affine version of the factorization coordinates introduced in the study of total positivity \cite{Fomin1999}.  These coordinates are closely related with the cluster algebra structure on the coordinate ring of the double Bruhat cell \cite{Williams}, and are crucial for understanding the relationship between our systems and those constructed from dimer models \cite{Marshakov2012}.   Moreover, we will see that total positivity provides the natural link between our complex-algebraic construction and the usual real form of the relativistic Toda system.  Finally, the theory of cluster algebras provides a natural setting for the study of discrete symmetries of integrable systems, for example as worked out in detail for the $GL_n$ Coxeter-Toda systems in \cite{Gekhtman2011}. 

The layout of the paper is as follows.  In \cref{sec:kmgroups} we recall the needed background on affine Kac-Moody algebras and groups.  \Cref{sec:plgroups} is devoted to the Poisson-Lie theory of ind-algebraic groups, in particular symmetrizable Kac-Moody groups and their standard Poisson structure.  In \Cref{sec:dbc} we prove some geometric results about double Bruhat cells in Kac-Moody groups, and in particular describe their factorization coordinates and Poisson brackets.  Finally, in \cref{sec:acts} we show that the reduced Coxeter double Bruhat cells of an affine Kac-Moody group possess canonical integrable systems, and derive the relativistic Toda lattice from this point of view.  

\textsc{Acknowledgments.} 
I would like to thank Nicolai Reshetikhin for his generous support and for suggesting the topic of this publication.  I also thank Kevin Schaeffer, Theo Johnson-Freyd, Ed Frenkel, David Kazhdan, and Lauren Williams for valuable discussions and comments.  This research was supported by NSF grant DMS-0901431 and the Centre for Quantum Geometry of Moduli Spaces at Aarhus University.

\section{Background on Kac-Moody Algebras and Groups}\label{sec:kmgroups}

In this section we recall the needed background on Kac-Moody algebras and groups, paying particular attention to the affine case \cite{Kac1994,Kumar2002,Mathieu1988}.

\subsection{Kac-Moody Algebras} A generalized Cartan matrix $C$ is an $r \times r$ integer matrix such that
\begin{enumerate}
\item $C_{ii} = 2$ for all $1 \leq i \leq r$
\item $C_{ij} \leq 0$ for $i \neq j$
\item $C_{ij} = 0$ if and only if $C_{ji} = 0$.
\end{enumerate}
To the matrix $C$ is associated a Lie algebra $\fg:=\fg(C)$.  The Cartan subalgebra $\hh \subset \fg$ contains simple coroots $\{ h_1, \dots , h_r\}$, its dual contains simple roots $\{ \al_1, \dots, \al_r\}$, and these satisfy $\langle \al_j | h_i \rangle = C_{ij}$.  The algebra $\fg$ is generated by $\hh$ and the Chevalley generators $\{ e_1, f_1, \dots, e_r, f_r\}$, subject to the relations
\begin{enumerate}
\item $[h,h'] = 0$ for all $h, h' \in \hh$
\item $[h,e_i] = \langle \al_i | h \rangle e_i$, $[h,f_i] = - \langle \al_i | h \rangle f_i$ for all $h \in \hh$
\item $[e_i,f_i] = h_i$
\item $[e_i,f_j] = ad(e_i)^{1-C_{ij}}e_j = ad(f_i)^{1-C_{ij}}f_j = 0$ for all $i \neq j$.
\end{enumerate}

We assume throughout that $C$ is symmetrizable; that is, there exist positive numbers $d_i$ such that $d_iC_{ij} = d_jC_{ji}$ for all $i,j$.  In this case there is a corresponding symmetric nondegenerate invariant bilinear form on $\fg$.  It restricts nondegenerately to $\hh$, and may be normalized so that $\frac{\|\al_i\|}{2} = d_i$.

The roots of $\fg$ are the elements $\al \in \hh^*$ such that
\[
\fg_{\al} =  \{X \in \fg\: |\: [h,X] = \langle \al|h \rangle X \mathrm{\:\: for\: all\:\:} h \in \hh \}
\]
is nonzero.  Any nonzero root is a sum of simple roots with either all positive or all negative integer coefficients, and we say it is positive or negative accordingly.  We then have subalgebras
\[
\nn_+ = \bigoplus_{\al > 0} \fg_{\al}, \quad \nn_- = \bigoplus_{\al < 0} \fg_{\al}.
\]
If $\fg'$ denotes the derived subalgebra of $\fg$ and $\hh' = \bigoplus_{i=1}^r \CC h_i$, then we have vector space decompositions
\[
\fg = \nn_- \oplus \hh \oplus \nn_+, \quad \fg' = \nn_- \oplus \hh' \oplus \nn_+.
\]

The Weyl group $W$ of $\fg$ is the subgroup of $\mathrm{Aut}(\hh^*)$ generated by the simple reflections
\[
s_i: \be \mapsto \be - \langle \be|h_i \rangle \al_i.
\]
A nonzero root is said to be real if it is conjugate to a simple root under $W$, and imaginary otherwise.  A reduced word for an element of $W$ is an expression $w = s_{i_1} \cdots s_{i_n}$ such that $n$ is as small as possible; the length $\ell(w)$ is then defined as the length of such a reduced word.  

The set of dominant integral weights is $P_+ := \{\la \in \hh^* : \langle \la | h_i \rangle \geq 0 \text{ for all } 1 \leq i \leq r\}$.  For each $\la \in P_+$ there is an irreducible $\fg$-representation $L(\la)$ with highest weight $\la$, unique up to isomorphism.  The representation $L(\la)$ is the direct sum of finite-dimensional $\hh$-weight spaces, and its graded dual $L(\la)^\vee$ is an irreducible lowest-weight representation. 

We say $\fg(C)$ is of finite type if $C$ is positive definite, and affine type if $C$ is positive semidefinite.  In the former case it is a finite-dimensional semisimple Lie algebra, while in the latter it admits an alternative description in terms of loop algebras.  

More precisely, let $\fg(C)$ be a semisimple Lie algebra with Cartan matrix $C$.  Its loop algebra $L\fg := \fg(C) \otimes \CC[z^{\pm 1}]$ has a universal central extension $\widetilde{L\fg} := \CC c \oplus L\fg$ with bracket
\[
[Xz^m + Ac, Yz^n + Bc] = [X,Y]z^{m+n} + \de_{m+n,0} \langle X, Y \rangle c.
\]
The action of $\frac{d}{dz}$ on $L\fg$ by derivations extends to an action on $\wt{L\fg}$, so we have the semidirect product $\wh{L\fg} := \CC\frac{d}{dz} \ltimes \wt{L\fg}$. There is an extended Cartan matrix $\wt{C}$ such that $\wh{L\fg} \cong \fg(\wt{C})$ and $\wt{L\fg} \cong \fg'(\wt{C})$.  To form $\wt{C}$ we adjoin an extra row and column to $C$ by setting
\[
C_{0,0} = 2, \quad C_{k,0} = -\th(h_k), \quad \mathrm{and} \quad C_{0,i} = -\al_i(h_\th).
\]
Here $\th = \sum_{i=1}^r \th_i \al_i$ is the highest root of $\fg(C)$, and we will always normalize the form on $\fg(C)$ so that $\langle \th, \th \rangle = 2$ (to simplify later formulas we will also use the convention $\th_0 = 1$).  Note that we index the simple roots of a general Kac-Moody algebra by $\{1,\dots,r\}$, while we index affine simple roots by $\{0,\dots,r\}$.  Every affine Kac-Moody algebra is either of the form $\wh{L\fg}$ or a twisted version thereof; for simplicity we will only consider the former case.

\subsection{Kac-Moody Groups}\label{subsec:KMgroups}

To a generalized Cartan matrix $C$ we may also associate a group $G$, which is a simply-connected complex algebraic group when $C$ is of finite type \cite{Peterson1983,Kumar2002}.  In general $G$ is an ind-algebraic group, and shares many important properties with the simple algebraic groups, in particular a Bruhat decomposition and generalized Gaussian factorization.

For each real root $\al$, $\Gg$ contains a one-parameter subgroup $x_{\al}(t)$, and is generated by these together with the Cartan subgroup $H$ (for simple roots, we will write $x_{\pm i}(t) := x_{\pm \al_i}(t)$).  We denote the subgroups generated by the positive and negative real root subgroups by $\Uu_+$ and $\Uu_-$, respectively, and we also have the positive and negative Borel subgroups $\Bb_{\pm} := H \ltimes \Uu_{\pm}$.  If $N(H)$ is the normalizer of $H$ in $\Gg$, then $N(H)/H$ is isomorphic with the Weyl group.  In particular, the simple reflections $s_{\al}$ have representatives in $\Gg$ of the form
\begin{equation}\label{eqn:simplerootformula}
\ol{s}_{\al} = x_{\al}(1) x_{-\al}(-1) x_{\al}(1).
\end{equation}  

Recall that an ind-variety $X$ is the union of an increasing sequence of finite-dimensional varieties $X_n$ whose inclusions $X_n \hookrightarrow X_{n+1}$ are closed embeddings \cite{Shafarevich1982}.  We say a map $X \xrightarrow{\phi} Y$ of ind-varieties is regular if for all $i \in \NN$ there exists an $n(i)$ such that $\phi(X_i) \subset Y_{n(i)}$ and the restrictions $X_i \xrightarrow{\phi|_{X_i}} Y_{n(i)}$ are regular.  If the $X_n$ are affine, the coordinate ring of $X$ is
\[
\CC[X] = \varprojlim \CC[X_n],
\]
topologized as an inverse limit of discrete vector spaces; regular maps of affine ind-varieties induce continuous homomorphisms between their coordinate rings.  We can also form products of ind-varieties in the obvious way. 

\begin{defn} 
An ind-algebraic group (or ind-group) $X$ is an ind-variety with a regular group operation $X \times X \to X$.
\end{defn} 

To define the ind-group structure on $\Gg$, consider the integrable $\fg$-representation
\[
V = \bigoplus_{i=1}^{\mathrm{dim}(H)} (L(\om_i) \oplus L(\om_i)^{\vee}).
\]
Here the $\om_i$ are a $\ZZ$-basis of $\mathrm{Hom}(H,\CC^*) \subset \hh^*$ such that $\langle \om_i | h_j \rangle = \de_{i,j}$ for $1\leq i \leq r$.  The group $\Gg$ acts on integrable highest weight representations of $\fg$ and their restricted duals, hence on $V$. If $v_i$ and $v_i^{\vee}$ are the highest and lowest weight vectors of $L(\om_i)$ and $L(\om_i)^{\vee}$, respectively, the map $g \mapsto g \cdot \sum_{i=1}^r(v_i + v_i^{\vee})$ embeds $\Gg$ injectively into $V$.  We may filter $V$ by finite direct sums of its weight spaces, and the intersections of $\Gg$ with these are closed subvarieties that define an ind-group structure on $\Gg$ \cite[7.4.14]{Kumar2002}.  The subgroups $H$, $\Uu_{\pm}$, and $\Bb_{\pm}$ are then closed subgroups.

\begin{prop} \label{prop:gaussian}
\emph{(\cite[6.5.8 and 7.4.11]{Kumar2002})} The multiplication map $\Uu_- \times H \times \Uu_+ \to \Gg$ is a biregular isomorphism onto an open subvariety $\Gg_0$.  Thus for any $g \in \Gg_0$ we may write
\[
g = [g]_- [g]_0 [g]_+
\]
for some unique $[g]_{\pm} \in \Uu_{\pm}$ and $[g]_0 \in H$.  Moreover, the maps 
\[
\Gg_0 \to \Uu_{\pm} \: (\text{resp. } H) \quad g \mapsto [g]_{\pm} \: (\text{resp. } [g]_0)
\] are regular.
\end{prop}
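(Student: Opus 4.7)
The plan is to realize $\Gg_0$ as the open locus in $\Gg$ cut out by the non-vanishing of a family of \emph{principal minor} regular functions coming from the representations $L(\om_i)$, and then to read off the factorization of any element of this locus from its matrix coefficients.

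For each basis weight $\om_i$, let $v_i \in L(\om_i)$ be the highest weight vector and $\phi_i \in L(\om_i)^*$ the linear functional dual to $v_i$ on the one-dimensional weight space of weight $\om_i$, extended by zero on all other weight spaces. Since the action $\Gg \to \mathrm{End}(L(\om_i))$ is regular on each level of the ind-variety filtration, the matrix coefficient
\[
\De_i(g) := \langle \phi_i, g \cdot v_i \rangle
\]
is a regular function on $\Gg$. The crucial calculation is that $\Uu_+$ fixes $v_i$, $H$ acts on $v_i$ by the character $\om_i$, and $\Uu_-$ sends $v_i$ to a vector of the form $v_i + (\text{strictly lower weight})$; combining these, $\De_i(u_- h u_+) = h^{\om_i}$, which is nowhere zero. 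Hence the open subvariety
\[
\Gg_0 := \{g \in \Gg : \De_i(g) \neq 0 \text{ for all } i\}
\]
contains the image of the multiplication map.

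I would next show multiplication is injective with image exactly $\Gg_0$. For injectivity, if $u_- h u_+ = u_-' h' u_+'$, applying $\De_i$ gives $h^{\om_i} = (h')^{\om_i}$ for every $i$, and since the $\om_i$ generate $\mathrm{Hom}(H,\CC^*)$ this forces $h = h'$; rearranging places $u_-^{-1}u_-'$ in $\Uu_- \cap H\Uu_+$, which is trivial by the disjointness of weight supports of the three factor groups on $V$. For surjectivity onto $\Gg_0$, I would invoke the Bruhat decomposition of $\Gg$ from Kac-Moody theory, in which $\Gg$ is partitioned into cells indexed by $W$: on any cell other than the one containing $e$, there exists an index $i$ for which $\De_i$ vanishes identically, because the relevant Weyl translate of $v_i$ has weight strictly less than $\om_i$ in the dominance order and so pairs trivially with $\phi_i$. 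Thus non-vanishing of every $\De_i$ pins $g$ to $\Uu_- H \Uu_+$. This surjectivity step is the main obstacle: in the Kac-Moody setting there is no longest Weyl element, and one must appeal directly to the refined Tits system developed in \cite{Kumar2002} to control the Bruhat stratification as an ind-stratification.

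Finally, regularity of the inverse is essentially read off the construction. The element $[g]_0$ is characterized by $[g]_0^{\om_i} = \De_i(g)$, hence depends regularly on $g$. Next, $g = [g]_- [g]_0 [g]_+$ together with $[g]_+ v_i = v_i$ gives $[g]_- v_i = \De_i(g)^{-1}\, g v_i$ for every $i$, so $g \mapsto [g]_-$ factors as a regular map $\Gg_0 \to \prod_i L(\om_i)$ followed by the inverse of the closed embedding $\Uu_- \hookrightarrow \prod_i L(\om_i)$, $u \mapsto (uv_i)_i$. A symmetric argument using the lowest weight representations $L(\om_i)^\vee$ yields regularity of $g \mapsto [g]_+$, completing the identification of multiplication as a biregular isomorphism.
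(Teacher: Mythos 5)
The paper itself gives no proof of this proposition --- it is quoted directly from \cite[6.5.8 and 7.4.11]{Kumar2002} --- so the relevant comparison is with the standard argument in the literature, which your sketch essentially reconstructs: cut out $\Gg_0$ by non-vanishing of the generalized minors $\De_i$, and recover the three factors from matrix coefficients. The containment of the image in $\{\De_i \neq 0 \text{ for all } i\}$, the injectivity, and the regularity of $g \mapsto [g]_0$ and $g\mapsto [g]_\pm$ via the defining embedding $\Gg \hookrightarrow \bigoplus_i \bigl(L(\om_i)\oplus L(\om_i)^\vee\bigr)$ are all sound.

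The genuine gap is in the surjectivity step. The decomposition you invoke --- ``the Bruhat decomposition \dots partitioned into cells indexed by $W$,'' i.e.\ $\Gg = \bigsqcup_w \Bb_+\dot w\Bb_+$ as in \cref{prop:Bruhat} --- does not have the property you assert. The cell of that decomposition containing $e$ is $\Bb_+$, not $\Uu_- H\Uu_+$, and $\De_i$ does \emph{not} vanish identically on the other cells: for $t\neq 0$ the element $x_{-j}(t)$ lies in $\Bb_+ \ol{s}_j \Bb_+$, yet $\De_i(x_{-j}(t))=1$ for every $i$ (consistently, $x_{-j}(t)\in\Uu_-\subset\Gg_0$). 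The decomposition your weight argument actually requires is the Birkhoff (mixed) decomposition $\Gg=\bigsqcup_w \Uu_-\dot w\Bb_+$, whose $w=e$ cell is exactly $\Uu_- H\Uu_+$: on $\Uu_-\dot w\Bb_+$ with $w\neq e$ one chooses $i$ with $w\om_i\neq\om_i$, so that $\dot w v_i$ has weight $w\om_i<\om_i$ and $\Uu_-$ only lowers weights further, forcing $\De_i\equiv 0$ there. In the Kac--Moody setting the Birkhoff decomposition is a separate and harder theorem than the Bruhat decomposition (Kac--Peterson; see \cite[5.2.3]{Kumar2002}), and it is this result --- not the $(\Bb_+,\Bb_+)$ Tits system --- that must be cited. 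With that substitution, and the easy observation that no nontrivial $w$ fixes all the $\om_i$, your argument goes through.
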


\begin{prop} \label{prop:Bruhat}
\emph{(\cite[7.4.2]{Kumar2002})} The double coset decomposition of $\Gg$ with respect to $\Bb_{\pm}$ can be written as
\[
\Gg = \bigsqcup_{w \in W} \Bb_+ \dot{w} \Bb_+ = \bigsqcup_{w \in W} \Bb_- \dot{w} \Bb_-.
\]
Here $\dot{w}$ is any representative for $w$ in $\Gg$.  In particular, $\Gg$ is a disjoint union of the double Bruhat cells
\[
\Gg^{u,v} := \Bb_+ \dot{u} \Bb_+ \cap \Bb_- \dot{v} \Bb_-.
\]
\end{prop}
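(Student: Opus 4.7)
The plan is to obtain both decompositions from the fact that $(\Gg, \Bb_+, N(H), \{s_i\}_{i=1}^r)$ forms a Tits system. Once the Tits system axioms are verified, the standard abstract machinery yields the disjoint decomposition $\Gg = \bigsqcup_{w \in W} \Bb_+ \dot{w} \Bb_+$; the $\Bb_-$ version is then transported via the Chevalley involution.

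To set up the Tits system, I would first check that $\Bb_+$ and $N(H)$ together generate $\Gg$. This follows because they contain $H$, all positive real root subgroups, and each representative $\bar{s}_i = x_i(1) x_{-i}(-1) x_i(1)$, and conjugation by $\bar{s}_i$ carries $x_i(t)$ into the negative root subgroup $\Uu_{-\al_i}$ (a computation inside the rank-one $SL_2$-subgroup generated by $x_{\pm i}$); thus every simple negative root subgroup lies in the group generated. The nontrivial Tits-system axiom is the exchange relation asserting that $(\Bb_+ \bar{s}_i \Bb_+)(\Bb_+ \dot{w} \Bb_+)$ is contained in the union $\Bb_+ \dot{w} \Bb_+ \cup \Bb_+ \dot{v} \Bb_+$ where $v = s_i w$, together with $\bar{s}_i \Bb_+ \bar{s}_i \neq \Bb_+$. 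The key computational input is the $SL_2$-identity $\bar{s}_i x_i(t) \in x_{-i}(t^{-1})\, H\, x_i(-t^{-1})$ for $t \neq 0$, combined with the observation that $s_i$ permutes the positive real roots other than $\al_i$ among themselves, so that $\bar{s}_i \Uu_+ \bar{s}_i^{-1}$ differs from $\Uu_+$ only in the $\pm\al_i$ directions.

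Granted the Tits system, existence of the decomposition follows by induction on $\ell(w)$ from the exchange relation. Disjointness admits a clean representation-theoretic proof: for $\la$ a regular dominant integral weight and $v_\la$ the highest weight vector of $L(\la)$, the identity $\bar{u} = b_1 \bar{v} b_2$ with $b_j \in \Bb_+$ yields $\bar{u} \cdot v_\la \in \CC^\times \bar{v} \cdot v_\la + \bigoplus_{\mu > v\la} L(\la)_\mu$, so the extremal weight of $\bar{u} \cdot v_\la$ must equal both $u\la$ and $v\la$, forcing $u = v$. For the second decomposition, apply the Chevalley involution $\om: \Gg \to \Gg$ determined by $x_{\pm i}(t) \mapsto x_{\mp i}(-t)$ and inversion on $H$; this swaps $\Bb_+ \leftrightarrow \Bb_-$ while preserving $N(H)$ and reflection representatives up to $H$, so the $\Bb_+$ decomposition transports to the $\Bb_-$ decomposition, and the double Bruhat cells $\Gg^{u,v}$ are well-defined as intersections.

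The main obstacle is the exchange relation: one must control how $\bar{s}_i$ moves the infinite-dimensional pro-unipotent subgroup $\Uu_+$. What makes this feasible is that only the single simple root subgroup $x_i(\CC)$ fails to be stabilized by conjugation with $\bar{s}_i$, so the analysis reduces to the rank-one $SL_2$-calculation mentioned above, with all other positive real root directions behaving transparently.
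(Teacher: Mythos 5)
The paper offers no proof of its own here---it cites Kumar [7.4.2]---and your Tits-system argument is exactly the route taken in that source, so the approach matches and is sound: generation of $\Gg$ by $\Bb_+$ and $N(H)$, the exchange axiom via the semidirect decomposition $\Uu_+ = \Uu_{\al_i} \ltimes \Uu'_+(s_i)$ plus a rank-one computation, disjointness via extremal weight vectors in $L(\la)$ for regular dominant $\la$, and the Chevalley involution to transport the decomposition to $\Bb_-$. One concrete slip: the displayed identity $\bar{s}_i x_i(t) \in x_{-i}(t^{-1})\, H\, x_i(-t^{-1})$ is false (in the rank-one $SL_2$ the element $\bar{s}_i x_i(t)$ sends the highest weight vector of the standard representation to a lowest weight vector, so it cannot lie in the big cell $\Uu_- H \Uu_+$); the identity actually needed is the one for the \emph{negative} root subgroup, namely $x_{-i}(t) = x_i(t^{-1})\,\al_i^\vee(t^{-1})\,\bar{s}_i^{\,-1}\, x_i(t^{-1})$ for $t \neq 0$, which gives $\Uu_{-\al_i}\setminus\{e\} \subset \Bb_+ \bar{s}_i \Bb_+$ and makes the exchange computation go through as you intend.
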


For any $w \in W$ we have closed subgroups
\begin{gather*}
\Uu_{\pm}(w) := \Uu_{\pm} \cap \dot{w}^{-1} \Uu_{\mp} \dot{w}, \quad \Uu'_\pm(w) := \Uu_\pm \cap \dot{w}^{-1} \Uu_{\pm} \dot{w}.
\end{gather*}
The $\Uu_\pm(w)$ are $\ell(w)$-dimensional unipotent groups.  As above, $\dot{w}$ is some representative of $w$ in $\Gg$, but the resulting subgroup is independent of this choice. 

\begin{prop}\label{prop:unipotentfactorization}
\emph{(\cite[6.1.3]{Kumar2002})} For any $w \in W$, the multiplication maps
\[
\Uu_\pm(w) \times \Uu'_\pm(w) \to \Uu_\pm
\]
are biregular isomorphisms.
\end{prop}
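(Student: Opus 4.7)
The plan is to bootstrap the factorization from a Lie-algebra decomposition of $\nn_+$ and then assemble it at the group level using the pro-unipotent structure of $\Uu_\pm$. I treat the $+$ case; the $-$ case is completely symmetric. The key ingredient is the decomposition
\[
\nn_+ = \nn_+(w) \oplus \nn'_+(w), \qquad \nn_+(w) := \bigoplus_{\al \in \Phi(w)} \fg_\al, \quad \nn'_+(w) := \bigoplus_{\al > 0,\; \al \notin \Phi(w)} \fg_\al,
\]
where $\Phi(w) := \{\al > 0 : w^{-1}\al < 0\}$. Using the standard enumeration $\Phi(w) = \{s_{i_1}\cdots s_{i_{k-1}}\al_{i_k}\}_{k=1}^n$ for any reduced expression $w = s_{i_1}\cdots s_{i_n}$, I would verify that both $\Phi(w)$ and its complement in the positive roots are closed under root addition, so that $\nn_+(w)$ and $\nn'_+(w)$ are Lie subalgebras. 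These I identify with the (pro-)Lie algebras of $\Uu_+(w)$ and $\Uu'_+(w)$ respectively; note $\nn_+(w)$ is a finite direct sum of one-dimensional real root spaces, so $\Uu_+(w)$ has dimension $\ell(w)$.

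Next I would parametrize $\Uu_+(w)$ explicitly: setting $\be_k := s_{i_1}\cdots s_{i_{k-1}}\al_{i_k}$, I claim the product map
\[
\phi_w : \CC^n \to \Uu_+, \qquad (t_1,\ldots,t_n) \mapsto x_{\be_1}(t_1)\cdots x_{\be_n}(t_n)
\]
is a biregular isomorphism onto $\Uu_+(w)$. This I would prove by induction on $\ell(w)$ using the identity
\[
\Uu_+(w) = U_{\al_{i_1}} \cdot \bigl(\ol{s}_{i_1}\,\Uu_+(s_{i_1} w)\,\ol{s}_{i_1}^{-1}\bigr),
\]
valid whenever $\ell(s_{i_1}w) = \ell(w)-1$; the conjugate on the right already lies in $\Uu_+$ because its Lie algebra $\bigoplus_{k \geq 2}\fg_{\be_k}$ is a sum of positive real root spaces.

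With these ingredients in hand, I would show that $\mu : \Uu_+(w) \times \Uu'_+(w) \to \Uu_+$ is a biregular isomorphism. Regularity of $\mu$ is automatic from the regularity of multiplication in $\Gg$. Injectivity follows from $\Uu_+(w) \cap \Uu'_+(w) = \{1\}$, which in turn follows from the trivial intersection of their Lie algebras, since elements of a (pro-)unipotent group are uniquely determined by their images in each finite-dimensional unipotent quotient (where the exponential is bijective). For surjectivity and regularity of $\mu^{-1}$, I would pass to the pro-unipotent filtration $\Uu_+ = \varprojlim \Uu_+/\Uu_+^{(N)}$ by height of roots: at each level the Lie-algebra decomposition lifts via the exponential to a factorization of the finite-dimensional quotient, and the coordinates $t_k$ of the $\Uu_+(w)$-factor can be extracted as matrix coefficients of the action on suitable integrable highest-weight modules, so they depend regularly on the input. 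Assembling these regular sections across $N$ yields the required inverse of ind-varieties.

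The main obstacle is the infinite-dimensional bookkeeping for $\Uu'_+(w)$: the finite-dimensional factor $\Uu_+(w)$ is easy to control via $\phi_w$, but one must verify that $\Uu'_+(w)$ is a genuine closed ind-subgroup compatible with the pro-unipotent filtration of $\Uu_+$, and that the level-wise regular sections extracted from matrix coefficients really do assemble into a regular map of ind-varieties. Once the pro-unipotent framework is in place, the factorization drops out from the Lie-algebra decomposition in the standard way.
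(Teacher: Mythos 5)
Your strategy amounts to a from-scratch reproof of the cited result \cite[6.1.3]{Kumar2002}, and the bijectivity half of it (the decomposition $\nn_+=\nn_+(w)\oplus\nn'_+(w)$, the enumeration of $\Phi(w)$, the parametrization $\phi_w$, injectivity from $\Uu_+(w)\cap\Uu'_+(w)=\{e\}$) is sound in outline. The genuine gap is in the surjectivity step, where you write $\Uu_+=\varprojlim \Uu_+/\Uu_+^{(N)}$ and lift the Lie-algebra decomposition level by level via the exponential. For the group $\Uu_+$ used here --- the subgroup of the \emph{minimal} Kac--Moody group generated by the real positive root subgroups, carrying an ind-variety (direct limit) structure --- that identity is false: the inverse limit of the height-truncated quotients is the strictly larger pro-unipotent completion (the formal rather than polynomial loop group in affine type). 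Your level-wise argument therefore produces a factorization $g=g_1g_2$ with $g_2$ a priori only in the completion, and an ``inverse'' that is a priori only a map of pro-varieties; one must separately show that $g_2=g_1^{-1}g$ lands in $\Uu'_+(w)=\Uu_+\cap\dot{w}^{-1}\Uu_+\dot{w}$ inside the minimal group, and that the assembled sections define a regular map of \emph{ind}-varieties. Relatedly, identifying $\Uu'_+(w)$ with a subgroup ``corresponding to'' $\nn'_+(w)$ (which contains all the imaginary root spaces) by exponentiation is exactly the kind of step that fails in this setting: see the footnote to \cref{prop:simplegenerators}, where $\Uu_+\subset\wt{LSL}_2$ is not generated by subgroups whose Lie algebras generate $\nn_+$. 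This is why the paper outsources bijectivity to Kumar rather than rebuilding it.

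For the part you identify as the main obstacle --- regularity of the inverse --- there is a one-line argument that avoids matrix coefficients entirely, and it is the paper's entire proof. Since $\dot{w}\,\Uu_+(w)\,\dot{w}^{-1}\subset\Uu_-$ and $\dot{w}\,\Uu'_+(w)\,\dot{w}^{-1}\subset\Uu_+$, for $g=g_1g_2$ one has $\dot{w}g\dot{w}^{-1}=(\dot{w}g_1\dot{w}^{-1})(\dot{w}g_2\dot{w}^{-1})\in\Uu_-\cdot\Uu_+\subset\Gg_0$, so $g_1=\dot{w}^{-1}[\dot{w}g\dot{w}^{-1}]_-\dot{w}$ and $g_2=\dot{w}^{-1}[\dot{w}g\dot{w}^{-1}]_+\dot{w}$. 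Conjugation is regular and the Gaussian projections $[\,\cdot\,]_\pm$ are regular by \cref{prop:gaussian}, so the inverse of the multiplication map is regular. I recommend replacing your level-wise assembly with this.
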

\begin{proof}
That these are bijections follows from \cite[6.1.3]{Kumar2002}.  The inverse map is regular since the projection maps from $\Uu_\pm$ to $\Uu_\pm(w)$, $\Uu'_\pm(w)$ are: we can write them as conjugation by $\dot{w}$ followed by the maps $g \mapsto [g]_{\pm}$ of \cref{prop:gaussian}.
\end{proof}

The Bruhat decomposition then admits the following refinement:

\begin{corollary} \label{prop:refinedBruhat}
The natural maps
\[
\Uu_\pm \to \Uu_\pm(w) \dot{w} \Bb_\pm / \Bb_\pm, \quad g \mapsto g\dot{w}\Bb_\pm
\]
are biregular isomorphisms.  In particular, the Bruhat cells can be written as
\[
\Bb_\pm \dot{w} \Bb_\pm = \Uu_\pm(w) \dot{w} \Bb_\pm.
\]
\end{corollary}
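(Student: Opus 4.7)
The plan is to prove the stronger statement that the multiplication map $\mu \colon \Uu_\pm(w) \times \Bb_\pm \to \Bb_\pm \dot{w} \Bb_\pm$, $(g,b) \mapsto g\dot{w}b$, is a biregular isomorphism; both assertions of the corollary will then follow by projecting to $\Gg / \Bb_\pm$, since $\Uu_\pm(w)$ is a section of $\mu$ for the right $\Bb_\pm$-action.

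Surjectivity of $\mu$ gives the identity $\Bb_\pm \dot{w} \Bb_\pm = \Uu_\pm(w) \dot{w} \Bb_\pm$. Using $\Bb_\pm = H \Uu_\pm$ and $\dot{w}^{-1} H \dot{w} = H$, we rewrite $\Bb_\pm \dot{w} \Bb_\pm = \Uu_\pm \dot{w} \Bb_\pm$. By \cref{prop:unipotentfactorization} we have $\Uu_\pm = \Uu_\pm(w) \cdot \Uu'_\pm(w)$, and the defining inclusion $\Uu'_\pm(w) \subset \dot{w}^{-1} \Uu_\pm \dot{w}$ gives $\dot{w} \Uu'_\pm(w) \dot{w}^{-1} \subset \Uu_\pm \subset \Bb_\pm$, so the $\Uu'_\pm(w)$ factor is absorbed on the right and we obtain $\Uu_\pm \dot{w} \Bb_\pm = \Uu_\pm(w) \dot{w} \Bb_\pm$.

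For injectivity, suppose $g_1 \dot{w} b_1 = g_2 \dot{w} b_2$. Then $\dot{w}^{-1}(g_2^{-1} g_1)\dot{w} = b_2 b_1^{-1}$, where the left side lies in $\dot{w}^{-1}\Uu_\pm(w)\dot{w} \subset \Uu_\mp$ (by definition of $\Uu_\pm(w)$) and the right side lies in $\Bb_\pm$. Since $\Uu_\mp \cap \Bb_\pm = \{e\}$ we conclude $g_1 = g_2$ and $b_1 = b_2$.

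The main obstacle is to show regularity of $\mu^{-1}$. Given $x = g \dot{w} b$, observe that $\dot{w}^{-1} x = (\dot{w}^{-1} g \dot{w}) b \in \Uu_\mp \cdot \Bb_\pm$. For the $+$ case this is the open Gaussian cell $\Gg_0$ of \cref{prop:gaussian}; for the $-$ case it is the opposite open cell $\Uu_+ H \Uu_-$, for which the analogous factorization result follows by composing \cref{prop:gaussian} with inversion on $\Gg$. In either case, uniqueness of the Gaussian factorization lets us read off $\dot{w}^{-1} g \dot{w} = [\dot{w}^{-1} x]_\mp$, so $g = \dot{w}[\dot{w}^{-1} x]_\mp \dot{w}^{-1}$ and $b = \dot{w}^{-1} g^{-1} x$ are regular in $x$. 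Composing $\mu^{-1}$ with projection onto $\Uu_\pm(w)$ yields a regular retraction $\Bb_\pm \dot{w} \Bb_\pm \to \Uu_\pm(w)$ that is constant on right $\Bb_\pm$-cosets, and thus descends to a regular two-sided inverse of the map $\Uu_\pm(w) \to \Uu_\pm(w) \dot{w} \Bb_\pm / \Bb_\pm$ asserted by the corollary.
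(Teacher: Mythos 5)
The paper states this corollary without proof, leaving it as an immediate consequence of \cref{prop:unipotentfactorization}; your argument---decompose $\Uu_\pm$ via that proposition, absorb the $\Uu'_\pm(w)$ factor into $\dot{w}\Bb_\pm$, deduce injectivity from $\Uu_\mp \cap \Bb_\pm = \{e\}$, and obtain regularity of the inverse from the Gaussian decomposition of \cref{prop:gaussian}---is exactly the intended one. Spelling out the regularity of $\mu^{-1}$ via $g = \dot{w}[\dot{w}^{-1}x]_\mp\dot{w}^{-1}$ is a genuine improvement over what the paper leaves implicit, and you were right to read the domain of the stated map as $\Uu_\pm(w)$ rather than $\Uu_\pm$ (on all of $\Uu_\pm$ the map is visibly not injective, precisely because $\Uu'_\pm(w)$ is absorbed).

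There is, however, a conjugation-direction mismatch that you should repair. To absorb $\Uu'_\pm(w)$ on the right of $\dot{w}$ you need $\dot{w}^{-1}\Uu'_\pm(w)\dot{w} \subset \Bb_\pm$, but the inclusion you actually derive from the paper's definition is $\dot{w}\,\Uu'_\pm(w)\,\dot{w}^{-1} \subset \Uu_\pm$---the opposite conjugation, which only lets you absorb a factor sitting to the \emph{left} of $\dot{w}$. The same issue recurs in your injectivity step: you assert $\dot{w}^{-1}\Uu_\pm(w)\dot{w} \subset \Uu_\mp$ ``by definition,'' but the definition as printed, $\Uu_\pm(w) = \Uu_\pm \cap \dot{w}^{-1}\Uu_\mp\dot{w}$, yields $\dot{w}\,\Uu_\pm(w)\,\dot{w}^{-1} \subset \Uu_\mp$, which is not the same statement. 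The underlying cause is that the corollary, being about right cosets $g\dot{w}\Bb_\pm$, is only correct under the convention $\Uu_\pm(w) = \Uu_\pm \cap \dot{w}\,\Uu_\mp\,\dot{w}^{-1}$ and $\Uu'_\pm(w) = \Uu_\pm \cap \dot{w}\,\Uu_\pm\,\dot{w}^{-1}$ (the form in which Kumar's Lemma 6.1.3 is stated); with the paper's literal $\dot{w}^{-1}(\cdot)\dot{w}$ convention one instead gets $\Bb_\pm\dot{w}\Bb_\pm = \Bb_\pm\dot{w}\,\Uu_\pm(w)$. A quick check in $SL_3$ with $w = s_1s_2$ shows the two conventions genuinely differ, so this is not cosmetic. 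Once you state which convention you are using and conjugate consistently, every step of your proof is correct.
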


For each simple root $\al$, $\Gg'$ has a corresponding $SL_2$ subgroup $G_{\al}$ generated by $x_{\pm \al}(t)$.  In \cref{thm:rmatrix} we will use the following observation:

\begin{prop}\label{prop:simplegenerators}
$\Gg'$ is generated by the simple root $SL_2$ subgroups $G_\al$.\footnote{Since $\Gg'$ is infinite-dimensional it does not suffice to observe that the Lie algebras of the $G_\al$ together generate $\fg$.  For example, the Lie algebra of $\Uu_+ \subset \wt{LSL}_2$ is generated by the two simple positive root spaces, yet $\Uu_+$ is not generated by \emph{any} proper subcollection of the 1-parameter positive root subgroups \cite{Peterson1983}.}
\end{prop}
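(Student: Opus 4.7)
My plan is to let $K$ denote the subgroup of $\Gg$ generated by the simple root $SL_2$ subgroups $G_{\al_i}$, and to prove $\Gg' = K$. The inclusion $K \subseteq \Gg'$ is immediate, since each $G_{\al_i} \cong SL_2$ is perfect, giving $G_{\al_i} = [G_{\al_i}, G_{\al_i}] \subseteq [\Gg, \Gg] = \Gg'$.

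For the reverse inclusion I would proceed in three steps. First, every real root subgroup lies in $K$: each simple reflection representative $\ol{s}_i = x_i(1) x_{-i}(-1) x_i(1)$ from \eqref{eqn:simplerootformula} lies in $G_{\al_i} \subseteq K$, so a representative $\ol{w}$ of any $w \in W$ obtained from a reduced expression is itself in $K$; since every real root has the form $w(\al_i)$ for some $w \in W$ and some simple root $\al_i$, conjugating $x_{\al_i}(t)$ by $\ol{w}^{-1}$ shows $x_\al(t) \in K$ for every real root $\al$. Second, $H$ normalizes $K$: conjugation by $h \in H$ rescales the parameter on each $x_{\pm \al_i}(t)$ and fixes $H \cap G_{\al_i}$ pointwise, so it preserves each generator $G_{\al_i}$. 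Consequently $H \cdot K$ is a subgroup of $\Gg$, and since it contains $H$ together with every real root subgroup, the generation statement from \Cref{subsec:KMgroups} forces $H \cdot K = \Gg$.

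It then follows that $K$ is normal in $\Gg$ and that $\Gg / K \cong H / (H \cap K)$ is abelian, being a quotient of the abelian group $H$. Therefore $\Gg' = [\Gg, \Gg] \subseteq K$, which completes the argument.

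The conceptual pitfall flagged in the footnote is the temptation to reduce this to the Lie-algebra-level observation that the $\fg_{\pm \al_i}$ and $h_i$ generate $\fg'$; in the ind-algebraic setting the subgroup generated by a family of one-parameter subgroups can be strictly smaller than what one would naively exponentiate from the corresponding Lie subalgebra. The argument above sidesteps this by working entirely at the group level and exploiting only the abelianness of $H$, so the substantive content reduces to the two verifications that $K$ contains all real root subgroups and is normalized by $H$.
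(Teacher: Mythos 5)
Your proof is correct, and its essential content coincides with the paper's: the heart of the matter is that every real root subgroup $x_{w(\al)}(t)$ is a conjugate of a simple root subgroup by a Weyl representative $\ol{w}$, and that $\ol{w}$ itself lies in the subgroup $K$ generated by the $G_\al$ because of \cref{eqn:simplerootformula}. Where you diverge is in how you finish. The paper simply invokes the standard fact that the real root one-parameter subgroups generate $\Gg'$, so once they are shown to lie in $K$ the proof is over. You instead avoid citing that fact: you check that $H$ normalizes $K$, deduce $\Gg = H\cdot K$ from the generation statement for $\Gg$, conclude that $K$ is normal with abelian quotient $H/(H\cap K)$, and hence that $[\Gg,\Gg]\subseteq K$; together with the perfectness of $SL_2$ giving $K\subseteq[\Gg,\Gg]$, this identifies $K$ with the commutator subgroup. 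This buys you a self-contained argument that does not presuppose knowing which subgroup $\Gg'$ is (and in effect re-proves that $[\Gg,\Gg]$ equals the subgroup generated by the real root subgroups), at the cost of implicitly pinning down $\Gg'$ as the abstract commutator subgroup $[\Gg,\Gg]$ rather than as the subgroup generated by the $U_\al$ --- the two agree for Kac--Moody groups, and your real-root-subgroup step covers the latter reading directly in any case, so nothing is lost. Both arguments correctly sidestep the Lie-algebra-level trap flagged in the footnote by working at the level of group elements.
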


\begin{proof}
It suffices to show that the real root 1-parameter subgroups lie in the subgroup generated by the $G_\al$, since these generate $\Gg'$.  By definition a real root $\be$ is one of the form $w(\al)$ for some simple root $\al$ and $w \in W$.  Then we can write the subgroup $x_{\be}(t)$ as $\dot{w}x_{\al}(t)\dot{w}^{-1}$ for any representative $\dot{w}$ of $w$ in $\Gg'$.  But by \cref{eqn:simplerootformula} this can be written in terms of simple root 1-parameter subgroups.  
\end{proof}

\begin{remark}
We could also consider a completed version of the Kac-Moody group $\Gg$, as in \cite[6.1.16]{Kumar2002}.  In the affine case, this corresponds to using the formal loop group rather than the polynomial loop group.  However, only the smaller group $\Gg$ has a double Bruhat decomposition, since the completed group does not have a Bruhat decomposition with respect to $\Bb_-$.  Furthermore, the formal loop group does not admit evaluation representations, so it is not the right object to consider in the context of the integrable systems constructed in \cref{sec:acts}.
\end{remark}

\subsection{Affine Kac-Moody Groups}
In affine type the group $\Gg$ admits an alternative description as the central extension of a loop group.  Let $C$ be a finite type Cartan matrix, $G$ the corresponding simply connected complex algebraic group with Lie algebra $\fg$, and $\Gg$ the Kac-Moody group of the extended matrix $\wt{C}$.  If $LG := G(\CC[z^{\pm 1}])$ is the group of regular maps from $\CC^*$ to $G$, there is a universal central extension
\[
1 \longrightarrow \CC^* \longrightarrow \wt{LG} \longrightarrow LG \longrightarrow 1
\]
and an isomorphism $\Gg' \cong \wt{LG}$.  The rotation action of $\CC^*$ on $LG$ extends to $\wt{LG}$, and $\Gg$ is isomorphic with the semidirect product $\CC^* \ltimes \wt{LG}$ \cite[13.2.9]{Kumar2002}.

The central extension splits canonically over the subgroups $G(\CC[z])$ and $G(\CC[z^{-1}])$ of $LG$, so we have $\CC^* \times G(\CC[z]), \CC^* \times G(\CC[z^{-1}]) \subset \wh{LG}$.  Evaluation at $z=0$ gives a homomorphism $\CC^* \times G(\CC[z]) \to G$, and $\Bb_+$ is the preimage of the positive Borel subgroup of $G$.  Similarly $\Bb_- \subset \CC^* \times G(\CC[z^{-1}])$ is the preimage of the negative Borel subgroup of $G$ under evaluation at $z=\infty$ \cite[13.2.2]{Kumar2002}.  The Cartan subgroup $\wt{H}$ of $\wt{LG}$ splits as the product of the center of $\wt{LG}$ and the Cartan subgroup $H$ of $G$, embedded as constant maps (we write the Cartan subgroup of an affine Kac-Moody group as $\wt{H}$ to distinguish it from the Cartan subgroup of $G$).

A faithful $n$-dimensional $G$-representation yields a closed embedding $G \hookrightarrow \mathrm{Mat}_{n \times n}$, hence an inclusion $LG \hookrightarrow \mathrm{Mat}_{n \times n} \otimes \CC[z^{\pm 1}]$.  The subsets
\[
LG_m := \left\{ A(z) = \sum_{k=-m}^m A_{ij}^k z^k : A(z) \in LG \right\} \subset \mathrm{Mat}_{n \times n} \otimes \CC[z^{\pm 1}]
\]
are affine varieties, and the natural maps $LG_m \hookrightarrow LG_{m+1}$ are closed embeddings.  This defines an ind-variety structure on $LG$, which is independent of the choice of representation.  

It is clear that under this ind-variety structure the evaluation maps $LG \to G$ are regular; the same cannot be said of the ind-variety structure $LG$ inherits as a Kac-Moody group.  Our discussion of double Bruhat cells is based on the latter structure, but for integrable systems we will consider functions pulled back along evaluation maps.  Thus to ensure these yield regular functions on double Bruhat cells we must verify the compatibility of the two ind-variety structures.  This is essentially well-known, but for convenience we include a proof.  We use $LG_{pol}$ to refer to $LG$ with the ind-variety structure described in this section, and $LG_{KM}$ to refer to the ind-variety structure described in \cref{subsec:KMgroups}.

\begin{prop}
The ind-variety structures $LG_{pol}$ and $LG_{KM}$ are equivalent. That is, the identity map is a biregular isomorphism between them.  
\end{prop}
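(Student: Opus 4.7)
The plan is to verify regularity of the identity map in both directions, since it is a set-theoretic bijection by construction. In each direction it suffices to show that every filtration piece of the source embeds into some piece of the target, and that the resulting restricted map is regular; concretely this reduces to comparing two systems of matrix coefficients.

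For the direction $LG_{pol} \to LG_{KM}$, I would exploit the loop-rotation grading on $V = \bigoplus_i (L(\om_i) \oplus L(\om_i)^\vee)$, which coincides with the $\delta$-weight decomposition on each summand. Multiplication by $z^k$ shifts $\delta$-weights by $k$, so a polynomial loop $A(z) = \sum_{|k| \le m} A_k z^k$ acts on $V$ by operators whose $\delta$-shift is bounded by $m$. Applied to the distinguished vector $\sum_i (v_i + v_i^\vee)$, which has $\delta$-weight zero, the orbit under $LG_m$ lies in the sum of finitely many weight spaces, each finite-dimensional by integrability. Hence $LG_m$ lands inside some $V_{n(m)}$, and the induced map is polynomial in the coordinates $A_{ij}^k$ since these determine the action of $A(z)$ on any vector of bounded $\delta$-weight.

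For the reverse direction $LG_{KM} \to LG_{pol}$, the task is to show that the generators $A \mapsto A_{ij}^k$ of $\CC[LG_{pol}]$ pull back to regular functions on $\Gg$, and that on each Kac-Moody piece only finitely many $k$ contribute. The key point is that $A_{ij}^k$ is itself a matrix coefficient: for fixed $i,j$ one has $A_{ij}^k = \langle w_j^*,\, A \cdot (w_i \otimes z^{-k}) \rangle$ in a suitable loop module of the faithful $G$-representation $W$ used to define $LG_{pol}$. Realizing this loop module as a subquotient of an integrable $\Gg$-module (for instance a level-zero fundamental representation, or a dual Weyl module appearing in a tensor power of summands of $V$) identifies $A_{ij}^k$ with a matrix coefficient of the $\Gg$-action on $V$, hence with a regular function in the Kac-Moody ind-structure. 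The degree bound $|k| \le m(n)$ on $\Gg \cap V_n$ then follows from the same $\delta$-weight support reasoning: elements with image supported in $\delta$-weights of absolute value at most $m$ can only produce Laurent coefficients of degree at most $m$.

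The main technical obstacle is the second direction, specifically the identification of the matrix entries of the finite-dimensional $G$-representation $W$ with matrix coefficients of integrable $\Gg$-modules. This requires passing between non-integrable evaluation-type loop modules and the integrable highest-weight modules defining the Kac-Moody ind-structure. Once one recognizes the evaluation representation as a subquotient of an integrable module (or, equivalently, produces enough integrable modules whose matrix coefficients separate the $A_{ij}^k$), both regularity and the polynomial-degree bound fall out cleanly from the compatibility of the $\delta$-grading with the loop rotation.
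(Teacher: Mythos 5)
Your overall strategy (verify regularity of the identity in both directions by tracking matrix coefficients) is natural, but both directions rest on steps that are unproven, and the key claim in the first direction is false as stated. You assert that $A(z)=\sum_{|k|\le m}A_kz^k$ acts on $V=\bigoplus_i(L(\om_i)\oplus L(\om_i)^\vee)$ by an operator whose $\de$-shift is bounded by $m$. That is true for the action on the evaluation module $W\ot\CC[z^{\pm1}]$, but the $L(\om_i)$ are positive-level highest-weight modules on which $A(z)$ does \emph{not} act by matrix multiplication. For example $\begin{pmatrix}1&tz\\0&1\end{pmatrix}=\exp(tzX)\in LSL_2$ lies in $LG_1$, yet it acts on $L(\om_0)$ by $\sum_n t^n(zX)^n/n!$, and since the bound from integrability on the nilpotency degree of the real root vector $zX$ depends on the weight of the vector it is applied to, this operator shifts $\de$-degree by arbitrarily large amounts. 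What you actually need is that the orbit of the cyclic vector $\sum_i(v_i+v_i^\vee)$ under $LG_m$ has weight support bounded in terms of $m$ (and that the resulting coordinates are polynomial in the $A_{ij}^k$); but that is essentially the statement being proved, so the argument is circular at this point. In the reverse direction, the step you flag as ``the main technical obstacle'' is a genuine obstruction rather than a technicality: $W\ot\CC[z^{\pm1}]$ is a level-zero module, while every subquotient of $L(\om_i)$, or of any tensor power of the $L(\om_i)$, has positive level, so the center acts by the wrong scalar and the proposed realization of the evaluation module as a subquotient cannot work in that form. Bringing in the duals $L(\om_i)^\vee$ to reach level zero leads into the theory of level-zero extremal-weight and Weyl modules, which is far from a routine subquotient argument and is not supplied here.

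For comparison, the paper avoids comparing the two structures on all of $LG$ at once. It first reduces to the unipotent subgroups via the factorization $\Uu_\pm\cong\Uu_\pm(w_\circ)\times\Uu'_\pm(w_\circ)$ of \cref{prop:unipotentfactorization}, treats $\Uu'_\pm(w_\circ)$ by invoking the known equivalence of the two ind-variety structures on the affine Grassmannian $LG/G(\CC[z])$ \cite[13.2.18]{Kumar2002} together with the faithful action on the big cell, and then globalizes using the Gaussian decomposition of \cref{prop:gaussian} and the open cover of $LG$ by translates of $\wt{LG}_0$. If you want to keep your more direct route, you would need an independent proof of the bounded-support statement for orbits of the highest- and lowest-weight vectors, e.g.\ via a fermionic or semi-infinite wedge realization of the $L(\om_i)$ inside exterior powers of the loop module; that is substantially more work than the reduction the paper uses.
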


\begin{proof}
We first show that the induced structures $(\Uu_\pm)_{pol}$ and $(\Uu_\pm)_{KM}$ are equivalent (note that $\Uu_\pm$ is manifestly a closed subgroup of $LG_{pol}$).  If $w_\circ$ is the longest element of the Weyl group of $G$, $\Uu'_-(w_\circ)$ and $\Uu_-(w_\circ)$ are closed subgroups of $LG_{pol}$, and \cref{prop:unipotentfactorization} is clearly true for $(\Uu_\pm)_{pol}$.  Thus showing the claim for $\Uu_\pm$ reduces to showing it for $\Uu'_\pm(w_\circ)$. 

We now invoke the corresponding theorem about the affine Grassmannian $X:=LG/G(\CC[z])=\wt{LG}/\Pp$, where $\Pp \subset \wt{LG}$ is the parabolic subgroup corresponding to the subset $\{ \al_1, \dots ,\al_r \} \subset \{ \al_0,\dots, \al_r\}$ of simple affine roots.  Like $LG$, $X$ has two equivalent but a priori distinct ind-variety structures \cite[13.2.18]{Kumar2002}.  First, it is a disjoint union of Schubert cells $X_w = \Bb_+ \dot{w} \Pp/ \Pp$, and is filtered by finite-dimensional projective varieties
\[
X_n = \bigcup_{\ell(w) \leq n} X_w.
\]
Alternatively, $X$ can be written as an increasing union of closed subvarieties of finite-dimensional Grassmannians.  We refer the reader to \cite[13.2.15]{Kumar2002} for the precise construction, noting only that it is clear that $LG_{pol}$ acts regularly on $X$.  In particular, $\Uu'_-(w_\circ)_{pol}$ acts faithfully on the dense open subset of $\wt{LG}_0/\Pp$, and $\Uu'_-(w_\circ)_{pol} \cong \Gg_0/\Pp \cong \Uu'_-(w_\circ)_{KM}$.  The claim for $\Uu_+$ follows similarly.

In particular, the two ind-variety structures on $\Uu_- \times H \times \Uu_+$ coincide.  By \cref{prop:gaussian} this is isomorphic with an open subset $LG_0 \subset LG_{KM}$.  But it is clear that $LG_0$ is open in $LG_{pol}$, and that \cref{prop:gaussian} holds for $LG_{pol}$.  Thus the two ind-variety structures on $\wt{LG}_0$ are equivalent, and since the translates of $\wt{LG}_0$ form an open cover of $LG$ the proposition follows.  
  
\end{proof}

\begin{remark}\label{remark:singular}
All but finitely many of the varieties used in either definition of the ind-variety structure are singular, and unavoidably so: in \cite{Fishel2004} it was shown that $X$ and $LG$ cannot be written locally as an increasing union of smooth subvarieties.  Thus $LG$ is not a complex manifold, even though we have the following property: for any $g \in LG$ the canonical map
\[
\varprojlim \Sym^*(m_i(g)/m_i(g)^2) \to \varprojlim \bigoplus_{n=0}^\infty m_i(g)^n/m_i(g)^{n+1}
\]
is an isomorphism, where $m_i(g) \subset \CC[LG_i]$ is the vanishing ideal of $g$ \cite[4.3.7]{Kumar2002}.
\end{remark}

\section{Infinite-Dimensional Poisson-Lie Theory} \label{sec:plgroups}

In this section we extend several essential results of Poisson-Lie theory to the setting of ind-algebraic groups, and Kac-Moody groups in particular.  Recall that a Poisson-Lie group is a Lie group equipped with a Poisson structure such that the group operation $G \times G \to G$ is a Poisson map; we refer to \cite{Kosmann-Schwarzbach1997,Chari1995,Reyman1994} for a detailed exposition in the finite-dimensional case.  

\subsection{Standard Poisson-Lie Structure on $SL_2$}

We briefly review the standard Poisson structure on $SL_2$; this is both a model for the general case, and essential for the explicit computations we will perform in \cref{sec:brackets}.  The Lie algebra $\sl_2$ has generators
\[
X = \begin{pmatrix} 0 & 1 \\ 0 & 0 \end{pmatrix}, \quad
Y = \begin{pmatrix} 0 & 0 \\ 1 & 0 \end{pmatrix}, \quad
H = \begin{pmatrix} 1 & 0 \\ 0 & -1 \end{pmatrix},
\]
and an invariant form unique up to fixing the scalar $d := \frac{2}{(H,H)}$. If $\Om_d \in \fg \otimes \fg$ is the corresponding Casimir, we write $\Om_d = \Om_{+-} + \Om_0 + \Om_{-+}$, where $\Om_0 \in \hh \otimes \hh, \Om_{+-} \in \nn_+ \otimes \nn_-,$ and $\Om_{-+} \in \nn_- \otimes \nn_+$.  We have the standard quasitriangular $r$-matrix is
\begin{equation}\label{eqn:sl2rmatrix}
r = \Om_0 + 2\Om_{+-} = d(\frac12 H \otimes H + 2 X \otimes Y).
\end{equation}
That is, $r$ is a solution of the classical Yang-Baxter equation 
\[
[r_{12},r_{13}]+[r_{12},r_{23}]+[r_{13},r_{23}] = 0,
\]
and its symmetric part is adjoint invariant \cite[2.1.11]{Chari1995}.

Trivializing the tangent bundle by right translations, we define a Poisson bivector whose value at $g \in SL_2$ is $\Ad_g(r) - r$.  The resulting tensor is skew-symmetric since the symmetric part of $r$ is invariant, and its compatibility with the group structure is immediate by construction.  Moreover, the Yang-Baxter equation implies the Jacobi identity for the corresponding Poisson bracket \cite[4.2]{Kosmann-Schwarzbach1997}.

Given the parametrization
\[
SL_2 = \left\{ \begin{pmatrix} A & B \\ C & D \end{pmatrix} : AD - BC = 1 \right\},
\]
the Poisson brackets of the coordinate functions are
\begin{gather*}
\{B,A\} = dAB, \quad \{B,D\} = -dBD, \quad \{B,C\} = 0, \\
\{C,A\} = dAC, \quad \{C,D\} = -dCD, \quad \{D,A\} = 2dBC.
\end{gather*}
To notate the dependence of the bracket on $d$, we denote the corresponding Poisson algebraic group by $SL_2^{(d)}$.

\subsection{Poisson Ind-Varieties} \label{subsec:poissonkm}
In this section we introduce a basic formalism for infinite-dimensional Poisson algebraic geometry.  All ind-varieties are tacitly taken to be affine unless stated otherwise.

\begin{defn}
A \emph{Poisson ind-variety} is an ind-variety $X$ with a Poisson bracket on $\CC[X]$, continuous as a map $\CC[X] \otimes \CC[X] \to \CC[X]$.  A \emph{Poisson map} is a regular map of ind-varieties which intertwines the Poisson brackets on their coordinate rings.
\end{defn}

Whenever $V = \varprojlim V_i$ and $W = \varprojlim W_i$ are inverse limits of (discrete) vector spaces, we have the completed tensor product $V \wh{\otimes} W := \varprojlim V_i \otimes W_i$.  For example, if $X$ and $Y$ are ind-varieties, $\CC[X] \wh{\otimes} \CC[Y]$ is just the coordinate ring of $X \times Y$.  $V \otimes W$ sits in $V\wh{\otimes} W$ as a dense subspace with respect to its inverse limit topology, and whenever we refer to a topology on $V \otimes W$ (as in the preceding definition) we mean its subspace topology. 

\begin{remark}\label{rmk:invtop}
The role of the inverse limit topology on $V$ is to restrict our attention to operations that can be defined through the $V_i$.  A linear map $\phi: V \to W$ is continuous if and only if for each $i$ and all $k \gg 0$ there are linear maps $\phi_{ki}: V_k \to W_i$ which commute with each other, the maps defining the inverse systems, and $\phi$ in the obvious ways (note that for each $i$, $\phi_{ki}$ is defined for $k$ sufficiently large, but how large $k$ must be depends on $i$).  In other words, taking the inverse limit is a full and faithful functor from the category of pro-vector spaces indexed by $\NN$ to the category of topological vector spaces.  This allows us to go back and forth between topological statements about $V$ and purely algebraic statements about the $V_i$.  In particular, we have the following useful observation:

\begin{lemma}\label{lem:protensor}
Let $\phi: V \to A$ and $\psi: W \to B$ be continuous linear maps between inverse limits of discrete vector spaces (indexed by $\NN$).  Then $\phi \ot \psi$ extends continuously to a map $\phi \wh{\otimes} \psi: V \wh{\otimes} W \to A \wh{\ot} B$ of completed tensor products.
\end{lemma}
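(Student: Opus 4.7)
The plan is to use the characterization of continuity given in \cref{rmk:invtop}: a continuous linear map out of an inverse limit of discrete vector spaces is precisely a compatible system of maps at the discrete levels, and tensor products commute with finite direct sums, so the construction should reduce to a level-wise one.

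More concretely, write $V = \varprojlim V_k$, $W = \varprojlim W_l$, $A = \varprojlim A_i$, $B = \varprojlim B_j$. By \cref{rmk:invtop} (or rather the characterization preceding \cref{lem:protensor}), continuity of $\phi$ produces, for every $i$, an index $k(i)$ and a linear map $\phi_i: V_{k(i)} \to A_i$ which commute with the transition maps on both sides and which assemble to recover $\phi$ after passing to inverse limits. Similarly $\psi$ yields $\psi_j: W_{l(j)} \to B_j$ for every $j$. First I would form the tensor products $\phi_i \otimes \psi_j: V_{k(i)} \otimes W_{l(j)} \to A_i \otimes B_j$ and verify that these commute with the transition maps defining the inverse systems on both $V_k \otimes W_l$ and $A_i \otimes B_j$ — this is immediate from functoriality of $\otimes$ in each variable and the corresponding compatibilities for $\phi_i$ and $\psi_j$ separately.

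Next, I would pass to inverse limits. Since the diagonal $\{(i,i)\}$ is cofinal in $\NN \times \NN$, we have $A \wh\otimes B = \varprojlim_i A_i \otimes B_i$ and similarly for $V \wh\otimes W$. The compatible family of level-wise tensors then determines, via the universal property of $\varprojlim$, a well-defined linear map $\phi \wh\otimes \psi: V \wh\otimes W \to A \wh\otimes B$. Applying the same characterization of continuity in the reverse direction, the fact that $\phi \wh\otimes \psi$ factors through a compatible family at the discrete level immediately gives continuity. Finally, to see that $\phi \wh\otimes \psi$ extends $\phi \otimes \psi$, I would note that an element of $V \otimes W$ lives in some $V_k \otimes W_l$ (under the natural map from the algebraic tensor product), and on such elements both maps agree by construction with $\phi_i \otimes \psi_j$ for sufficiently large $i,j$.

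The only real subtlety is bookkeeping of the two different index systems $k(i)$ and $l(j)$, and in particular confirming that the cofinal-diagonal trick is valid; but since each $V \wh\otimes W$ and $A \wh\otimes B$ is indexed by a countable directed set, this is routine. I do not expect any genuine obstacle — the lemma is essentially a tautology once one unpacks \cref{rmk:invtop}, and its role is just to legitimize writing down tensor products of continuous maps throughout the sequel.
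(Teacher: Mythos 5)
Your proposal is correct and follows essentially the same route as the paper: both use the characterization of continuity from \cref{rmk:invtop} to reduce to a compatible family of level-wise maps $\phi_{ki}\otimes\psi_{ki}: V_k\otimes W_k \to A_i\otimes B_i$, check compatibility, and pass to the inverse limit. The only difference is presentational — the paper works directly along the diagonal (which is how $V\wh\otimes W$ is defined in the first place), whereas you carry two index systems and then invoke cofinality of the diagonal, which amounts to the same thing.
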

\begin{proof}
Since $\phi$ and $\psi$ are continuous, they are determined by collections of maps $\{ \phi_{ki}: V_k \to A_i\: |\: k \gg 0 \}$  and $\{ \psi_{ki}: W_k \to B_i\: |\: k \gg 0 \}$ as above.  But then for each $i$ we have linear maps $\phi_{ki} \otimes \psi_{ki}: V_k \otimes W_k \to A_i \otimes B_i$ for $k$ sufficiently large.  These readily satisfy the necessary compatibility requirements, hence yield a continuous linear map $\phi \wh{\ot} \psi: V \wh{\ot} W \to A \wh{\ot} B$.
\end{proof}

\end{remark}

\begin{prop}\label{prop:prods}
For any Poisson ind-varieties $X$ and $Y$, $X \times Y$ has a canonical Poisson structure.
\end{prop}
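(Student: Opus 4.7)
The plan is to define the bracket on $\CC[X \times Y] = \CC[X] \wh{\otimes} \CC[Y]$ by first specifying it on the dense subalgebra $\CC[X] \otimes \CC[Y]$ via the classical product formula
\[
\{f_1 \otimes g_1, f_2 \otimes g_2\} = \{f_1,f_2\}_X \otimes g_1 g_2 + f_1 f_2 \otimes \{g_1,g_2\}_Y,
\]
and then extending by continuity. This formula is forced by the Leibniz rule once one insists that the projections $X \times Y \to X$ and $X \times Y \to Y$ be Poisson maps, so uniqueness---and with it the adjective ``canonical''---will be automatic.

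To obtain continuity, I would write the bracket as a sum of two compositions built from the continuous maps $\{\cdot,\cdot\}_X$, $\{\cdot,\cdot\}_Y$, the multiplication maps on the two coordinate rings, and the flip $\CC[Y] \otimes \CC[X] \to \CC[X] \otimes \CC[Y]$. Each of these building blocks is continuous, either by hypothesis (the brackets) or by inspection (the others). Applying \cref{lem:protensor} to tensor products of these maps and then composing, one obtains a continuous bilinear map
\[
\{\cdot,\cdot\}_{X \times Y} : \CC[X\times Y] \wh{\otimes} \CC[X \times Y] \to \CC[X \times Y]
\]
extending the formula above.

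With continuity in hand, the Poisson axioms follow formally. Skew-symmetry, the Leibniz rule, and the Jacobi identity are standard facts for the product bracket on the algebraic tensor product of two Poisson algebras; each such axiom is a multilinear identity in bracket and multiplication, so both sides define continuous multilinear maps out of an appropriate completed tensor power of $\CC[X \times Y]$. Since these continuous maps agree on the dense subspace $\CC[X] \otimes \CC[Y]$, they agree everywhere, and the axioms hold on all of $\CC[X \times Y]$.

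The main obstacle I anticipate is simply the bookkeeping of continuity: tracking that every intermediate step factors through a continuous map of pro-vector spaces indexed by $\NN$, so that \cref{lem:protensor} indeed applies and so that the equality of the two sides of each Poisson axiom can be transferred from the algebraic tensor product to the completed one. Once this is handled, the argument is essentially the finite-dimensional one applied term-by-term to the filtrations defining $X$ and $Y$.
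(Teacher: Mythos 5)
Your proof is correct and follows the same route as the paper: define the bracket on the dense subalgebra $\CC[X]\otimes\CC[Y]$ by the standard product formula and extend it continuously to $\CC[X\times Y]$ via \cref{lem:protensor}. The paper's version is terser, leaving the density argument for the Poisson axioms implicit, but the substance is identical.
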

\begin{proof}
The bracket on $\CC[X] \otimes \CC[Y] \subset \CC[X \times Y]$ may be given by the usual formula $\{ f \otimes \phi, g \otimes \psi \}_{X \times Y} := \{f,g\}_X \otimes \phi \psi + fg \otimes \{\phi, \psi\}_Y$.  The fact that this extends to all of $\CC[X \times Y]$ follows from \cref{lem:protensor} and the continuity of the brackets on $X$ and $Y$.   
\end{proof}

\begin{defn}
A \emph{Poisson Ind-Group} is an ind-algebraic group $G$ which is a Poisson ind-variety and whose group operation $G \times G \to G$ is Poisson.
\end{defn}

As in the case of $SL_2$, it will be convenient to define Poisson brackets implicitly by providing a bivector field.  However, the groups we are interested in need not be inductive limits of smooth varieties (see \cref{remark:singular}), so we must be careful in discussing their tangent bundles.  The following proposition guarantees that nonetheless the trivialized tangent bundle behaves as expected.

\begin{prop}\label{prop:bivector}
Let $\Gg$ be an ind-group and $\fg$ its Lie algebra.  There is a bijection between continuous $n$-derivations of $\CC[\Gg]$ and regular maps $\Gg \to \bigwedge^n \fg$ (by $n$-derivation we mean a skew-symmetric map $\CC[\Gg] \wh{\ot} \dots \wh{\ot} \CC[\Gg] \to \CC[\Gg]$ which is a derivation in each position).  Given a map $K: \Gg \to \bigwedge^n \fg$, the corresponding $n$-derivation $\wt{K}$ takes the functions $f_1,\dots,f_n \in \CC[\Gg]$ to the function
\[
\wt{K}(f_1, \dots, f_n): g \mapsto \langle K(g) | d_e \ell_g^* f_1 \wedge \dots \wedge d_e \ell_g^* f_n \rangle.
\]
\end{prop}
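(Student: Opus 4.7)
The plan is to construct both directions of the bijection and check that they are mutually inverse. I would work throughout with the identifications $\fg = T_e\Gg = \varinjlim T_e\Gg_i$ (an ind-vector space) and $\fg^* = m_e/m_e^2 = \varprojlim m_i(e)/m_i(e)^2$ (a pro-vector space), where $m_i(e) \subset \CC[\Gg_i]$ is the maximal ideal at $e$. Under these identifications $\fg$ is canonically the space of continuous linear functionals on $\fg^*$, and continuous alternating $n$-linear functionals on $\fg^*$ correspond bijectively to elements of $\bigwedge^n \fg$.

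Given a regular $K$, define $\wt{K}$ by the displayed formula. To show $\wt{K}(f_1,\dots,f_n) \in \CC[\Gg]$, I would decompose $K|_{\Gg_i}$ as a finite sum of decomposable wedges with coefficients in $\CC[\Gg_i]$, reducing to the case that for fixed $v \in \fg$ and $f \in \CC[\Gg]$, the function $g \mapsto v(d_e\ell_g^* f)$ is regular on $\Gg$. Expressing this function as $(\mathrm{id}\wh{\ot} v)(\mu^* f)$ through the coaction $\mu^* : \CC[\Gg] \to \CC[\Gg]\wh{\ot}\CC[\Gg]$ and applying \cref{lem:protensor} (valid because any $v \in \fg$ is a continuous point-derivation at $e$), one sees that it is indeed regular. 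The derivation property in each slot follows from the Leibniz identity $d_e\ell_g^*(fh) = f(g)\,d_e\ell_g^*h + h(g)\,d_e\ell_g^*f$; skew-symmetry is built into $\bigwedge^n\fg$; and continuity of $\wt K$ is assembled from these ingredients via \cref{lem:protensor}.

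Conversely, given a continuous $n$-derivation $D$, I would define $K(g)$ pointwise. The functional $\phi_g(f_1,\dots,f_n) := D(f_1,\dots,f_n)(g)$ is continuous, skew-symmetric, and Leibniz in each slot, so it vanishes whenever any $f_i \in m_g^2$, and descends to a continuous alternating $n$-form on $(m_g/m_g^2)^{\wh{\ot} n}$. Left translation identifies $m_g/m_g^2 \cong \fg^*$, producing a continuous alternating $n$-form on $\fg^*$, which by the duality recorded above defines a unique $K(g) \in \bigwedge^n\fg$. To check that $K$ is regular as a map of ind-varieties, I would use continuity of $D$ once more: on $\Gg_j$ the derivation $D$ factors through some $\CC[\Gg_k]^{\ot n} \to \CC[\Gg_j]$ (see \cref{rmk:invtop}), so $K|_{\Gg_j}$ takes values in the finite-dimensional subspace $\bigwedge^n T_e\Gg_k \subset \bigwedge^n\fg$, and pairing with any dual vector recovers a regular function on $\Gg_j$ by construction.

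Mutual inverseness is then essentially tautological, since the same pairing is used to define $\wt{K}$ from $K$ and to extract $K$ from $\wt{K}$. The main obstacle, and the reason this requires more than the textbook finite-dimensional argument, is ensuring that $K(g)$ genuinely lies in $\bigwedge^n\fg$ rather than in some enlargement such as $\bigwedge^n\fg^{**}$, and that the assignment $g \mapsto K(g)$ is regular rather than merely continuous. Both points rest crucially on the continuity hypothesis on $D$ combined with the pro/ind-linear bookkeeping of \cref{rmk:invtop} and \cref{lem:protensor}.
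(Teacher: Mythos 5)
Your construction is correct, and the forward direction is essentially identical to the paper's: the paper packages your observation that $g \mapsto v(d_e\ell_g^*f)$ equals $(1 \wh{\ot} (v\circ d_e))(\De f)$ into the single formula $\wt{K} = m \circ (1 \wh{\ot} K^*) \circ (1 \wh{\ot} d_e) \circ \De$, invoking \cref{lem:protensor} exactly as you do. For the converse the paper stays entirely inside the coordinate ring: it defines $K^* := m \circ (S \wh{\ot} \wt{K}) \circ \De$ with $S$ the antipode, checks that $K^*$ kills $m_e^2$ and hence descends to a continuous map $\fg^* = m_e/m_e^2 \to \CC[\Gg]$, which is the same data as a regular map $\Gg \to \fg$; mutual inverseness then follows from the antipode axiom. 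Your converse is the pointwise shadow of this: defining $K(g)$ as a continuous alternating functional on $(m_g/m_g^2)^{\wh{\ot} n}$ transported to $\fg^*$ by left translation is exactly what $K^*$ does fiberwise, and your use of pro/ind duality to land in $\bigwedge^n\fg$ rather than a double dual is the right and necessary point.

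The one step you should not wave through is the regularity of $g \mapsto K(g)$. What is regular ``by construction'' is $g \mapsto \langle K(g) \mid d_e\ell_g^*f_1 \wedge \cdots \wedge d_e\ell_g^*f_n\rangle = D(f_1,\dots,f_n)(g)$, in which the covectors being paired against \emph{move with} $g$. To extract the coefficient functions of $K|_{\Gg_j}$ in a basis of $\bigwedge^n T_e\Gg_{k}$ you must pair $K(g)$ against \emph{fixed} covectors $\xi_i \in \fg^*$, i.e.\ evaluate $D(\ell_{g^{-1}}^*\tilde f_1,\dots,\ell_{g^{-1}}^*\tilde f_n)(g)$ where $d_e\tilde f_i = \xi_i$, and the regularity of this expression in $g$ is not tautological. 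It is precisely what the composition $m \circ (S \wh{\ot} \wt{K}) \circ \De$ establishes (the antipode supplies the $\ell_{g^{-1}}^*$, and \cref{lem:protensor} the continuity), so the fix is available within the toolkit you are already using; also note that left translation by $g^{-1}$ may enlarge the index, so $K|_{\Gg_j}$ lands in $\bigwedge^n T_e\Gg_{k'}$ for some $k' \geq k$ determined by $\Gg_{j}^{-1}\cdot\Gg_k$. With that step made explicit your argument is complete.
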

\begin{proof}
We prove the case $n=1$, the higher rank case not being substantively different.  We first show that the regularity of $K$ ensures that the stated formula takes regular functions to regular functions, and that this assignment is continuous.   Note that $\fg$ is an ind-variety via its filtration by the $T_e\Gg_i$, and that there is a correspondence between regular maps $K:\Gg \to \fg$ and continuous linear maps $K^*: \fg^* \to \CC[\Gg]$.  Thus given $K$ we have a continuous linear endomorphism of $\CC[\Gg]$ given by
\[
\wt{K} := m \circ (1 \wh{\ot} K^*) \circ (1 \wh{\ot} d_e) \circ \De.
\]
Here $\De: \CC[\Gg] \to \CC[\Gg] \wh{\ot} \CC[\Gg]$ is the coproduct on $\CC[\Gg]$ and $m$ is the extension of the multiplication map to $\CC[\Gg] \wh{\ot} \CC[\Gg]$.  We have implicitly used \cref{lem:protensor} and the fact that $d_e$ is continuous.  This composition recovers the formula stated in the proposition when evaluated on a function $f \in \CC[\Gg]$, and in particular expresses it as a manifestly continuous map from $\CC[\Gg]$ to itself.  

Conversely, given a continuous derivation $\wt{K}$ of $\CC[\Gg]$, we consider the map $K^*: \CC[\Gg] \to \CC[\Gg]$ given by
\[
K^* := m \circ (S \wh{\ot} \wt{K}) \circ \De,
\]
where $S$ is the antipode of $\CC[\Gg]$.  If $m_e \subset \CC[\Gg]$ is the maximal ideal of the identity, we let the reader check that $K^*$ annihilates $m_e^2$, hence descends to a continuous linear map $K^*: \fg^* = m_e/m_e^2 \to \CC[\Gg]$.  As observed earlier, this data is equivalent to a regular map $K: \Gg \to \fg$.  Furthermore, from the defining property of the antipode it follows that this construction and the one above are inverse to each other.
\end{proof}

In particular, a Poisson structure on an ind-group $\Gg$ is determined by a Poisson bivecter $\pi: \Gg \to \bigwedge^2 \fg$.  Restating the compatibility of the bracket on $\Gg$ with the group operation in terms of $\pi$ we obtain the following definition.

\begin{defn}
A polyvector field $K: \Gg \to \bigwedge^n \fg$ is multiplicative if $K(gh) = \mathrm{Ad}_{h^{-1}}K(g) + K(h)$.
\end{defn}

\begin{remark}\label{rmk:bialgebra}
The derivative $d_e K : \fg \to \bigwedge^n \fg$ of a multiplicative polyvector field is a 1-cocycle of $\fg$ with values in $\bigwedge^n \fg$.  If $\pi$ is a Poisson bivector, then $d_e \pi$ is a Lie cobracket which makes $\fg$ a Lie bialgebra.  The dual of $d_e \pi$ is a continuous Lie bracket on $\fg^*$, which is the essentially the Poisson bracket on $\CC[\Gg]$.  That is, the maximal ideal of the identity $m_e \subset \CC[\Gg]$ is a Lie subalgebra and $m_e^2 \subset m_e$ an ideal, hence there is an induced Lie bracket on $\fg^*$.  We will not need this observation, except in \cref{sec:symp} where we describe an explicit alternative description of the bracket on $\fg^*$ in the Kac-Moody case.
\end{remark}

\subsection{Standard Poisson-Lie Structure on a Kac-Moody Group}

We now define the standard Poisson-Lie structure on a symmetrizable Kac-Moody group $\Gg$.  The construction follows the same lines as for $SL_2$ (or any semisimple Lie group), but the general case presents certain technical problems absent when considering finite-dimensional groups.\footnote{In the affine case, a different analytic approach is considered in \cite{Reshetikhin}.}

The invariant form on $\fg$ lets us identify it $\Gg$-equivariantly with a dense subspace of $\fg^*$, hence $\fg^* \wh{\otimes} \fg^*$ may be viewed as a completion of $\fg \otimes \fg$.  We denote this by $\fg \wh{\otimes} \fg$, and in particular there is an element $\Om$ of $\fg \wh{\otimes} \fg$ associated with the invariant form on $\fg$.  We write $\Om$ as $\Om_{+-} + \Om_0 + \Om_{-+}$, where $\Om_{0} \in \hh \otimes \hh$, $\Om_{+-} \in \nn_+ \wh{\otimes} \nn_-$, and $\Om_{-+} \in \nn_- \wh{\otimes} \nn_+$.  Then $r = \Om_0 + 2\Om_{+-}$ is a pseudoquasitriangular $r$-matrix \cite[Section 4]{Drinfel'd1988}; that is, $r$ satisfies the classical Yang-Baxter equation and has adjoint-invariant symmetric part, but cannot be written as a sum of finitely many simple tensors.

As in the finite-dimensional case, we want to define a Poisson bivector $\pi: \Gg \to \bigwedge^2 \fg$ by $\pi(g) = \Ad_g(r) - r$.  Now, however, $r$ is not an element of $\fg \otimes \fg$ but rather a completion thereof, so we must specifically prove that $\pi(g)$ is actually an element of $\bigwedge^2 \fg$.

\begin{thm}\label{thm:rmatrix}
The map $g \mapsto \Ad_g(r) - r$ defines a bivector field $\pi: \Gg \to \bigwedge^2 \fg$.
\end{thm}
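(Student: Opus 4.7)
The map $g \mapsto \Ad_g(r) - r$ is \emph{a priori} only well-defined into the completion $\fg \wh{\ot} \fg$, and the content of the theorem is that its image in fact lies in the algebraic exterior square $\bigwedge^2 \fg$ and depends regularly on $g$. My first move is to exploit the $\Ad$-invariance of the full Casimir $\Om = \Om_0 + \Om_{+-} + \Om_{-+}$ in $\fg \wh{\ot} \fg$ to rewrite
\[
\pi(g) := \Ad_g(r) - r = (\Ad_g - 1)(\Om_{+-} - \Om_{-+}),
\]
which makes the skew-symmetry of $\pi(g)$ manifest and focuses attention on how $\Ad_g$ acts on the explicitly skew element $\Om_{+-} - \Om_{-+}$.

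Next I would verify that $\pi$ satisfies the multiplicative cocycle identity $\pi(gh) = \pi(g) + \Ad_g\pi(h)$, which is immediate from $\Ad_{gh} = \Ad_g\Ad_h$. Since the adjoint action of $\Gg$ on $\fg$ is a regular action preserving the algebraic tensor product $\fg \ot \fg \subset \fg \wh{\ot} \fg$, it suffices to verify $\pi(g) \in \bigwedge^2 \fg$ on a generating set. By \cref{prop:simplegenerators}, $\Gg$ is generated by the Cartan $H$ together with the simple root $SL_2$ subgroups $G_\al$.

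The main technical step is the check on the $G_\al$ generators. For $h \in H$, $\Ad_h$ acts by a character on each root space and trivially on $\hh$, so it fixes $\Om_0$, $\Om_{+-}$, and $\Om_{-+}$ individually, giving $\pi(h) = 0$. For $g \in G_\al$ with $\al$ simple, I would decompose $\fg$ as a $G_\al$-module under $\ad$:
\[
\fg = \sl_2^{(\al)} \oplus \ker(\al|_\hh) \oplus \bigoplus_S V_S,
\]
where $V_S := \bigoplus_{\be \in S} \fg_\be$ ranges over the $\al$-strings $S$ of roots that avoid $\{\pm\al\}$. The crucial observation is that, since $\pm\al$ are the only roots proportional to $\al$, every such $S$ is of uniform sign. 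For positive $S$ the invariant form identifies $V_{-S}$ with $V_S^*$ as $G_\al$-modules and $\Om_{+-}|_{V_S \oplus V_{-S}} = \sum_{\be \in S} e_\be \ot f_\be$ coincides with the $G_\al$-invariant Casimir of the finite-dimensional submodule $V_S$, while $\Om_{-+}|_{V_S \oplus V_{-S}}$ is the invariant Casimir of $V_{-S}$; hence both are annihilated by $\Ad_g - 1$, and the analogous statement with signs reversed holds for negative $S$. The summand $\ker(\al|_\hh)$ is pointwise fixed by $\Ad_g$. All nonzero contributions to $\pi(g)$ therefore come from the $\sl_2^{(\al)}$ summand, and the computation reduces to the $SL_2^{(d_\al)}$ bracket already computed in Section 3.1; in particular $\pi(g) \in \bigwedge^2 \sl_2^{(\al)} \subset \bigwedge^2 \fg$ with polynomial dependence on $g \in G_\al$.

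Regularity on all of $\Gg$ then follows by propagating these polynomial expressions through the cocycle identity, using the regularity of multiplication in $\Gg$ and of the adjoint action. I expect the main obstacle to be isolating the correct use of $\Ad$-invariance of $\Om$: without the reformulation $\pi(g) = (\Ad_g - 1)(\Om_{+-} - \Om_{-+})$ one would have to contend with apparently infinite sums of contributions from different $\al$-strings, whereas the invariance of $\Om$ on each uniformly-signed $\al$-string forces every string other than the one through $\pm\al$ to contribute zero, cutting the computation down to a finite-dimensional $SL_2$ calculation.
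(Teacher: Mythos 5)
Your proposal is correct and follows essentially the same route as the paper: reduce to the generators $H$ and the simple root subgroups $G_\al$ via \cref{prop:simplegenerators} and the cocycle property of $g \mapsto \Ad_g(r)-r$, then use the $\al$-root-string decomposition of $\fg$ and the $G_\al$-invariance of the components of $r$ supported away from the $\al$-line to reduce to a finite-dimensional $SL_2$ computation. Your preliminary rewriting $\pi(g) = (\Ad_g-1)(\Om_{+-}-\Om_{-+})$ is a clean way to package the skew-symmetry, but it is only a cosmetic variation on the paper's observation that the symmetric part of $r$ is adjoint invariant.
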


\begin{proof}
First we check that $\Ad_g(r) - r \in \fg \ot \fg$ for all $g \in \Gg$.  We begin with the case where $g$ lies in the $SL_2$ subgroup $G_{\al}$ for some simple root $\al$.  First decompose $\fg$ as a direct sum of $G_\al$-subrepresentations corresponding to $\al$-root strings.  That is, let 
\[
\fg_{[\be]} = \bigoplus_{n \in \ZZ} \fg_{\be + n\al}, \quad \fg = \bigoplus_{[\be] \in \Qq/\ZZ\al} \fg_{[\be]},
\]
where $\Qq$ is the root lattice of $\Gg$.  Since $\al$ is simple, for any $[\be]$ we have either $\fg_{[\be]} \subset \nn_+$, $\fg_{[\be]} \subset \nn_-$, or $\be \in \ZZ\al$.  Furthermore, the invariant form on $\fg$ restricts to a nondegenerate $G_\al$-invariant pairing between $\fg_{[\be]}$ and $\fg_{[-\be]}$.  

Now we can rewrite the $r$-matrix as 
\[
r = r_\al + \sum_{\substack{[\be] \in \Qq/\ZZ\al \\ \be > 0}}r_{[\be]}.
\]
Here $r_{[\be]}$ is the element of $\fg_{[\be]} \otimes \fg_{[-\be]}$ representing their $G_\al$-invariant pairing and $r_\al \in \fg_{[\al]} \otimes \fg_{[\al]}$.  In particular, since $r_{[\be]}$ is $G_\al$-invariant, $\Ad_g(r_{[\be]}) = r_{[\be]}$ and
\[
\Ad_g(r) - r = \Ad_g(r_{[\al]}) - r_{[\al]}.
\]
The right hand side is manifestly finite-rank, hence $\Ad_g(r) - r \in \fg \ot \fg$ for $g \in G_{\al}$.

It is then straightforward to see that $\Ad_g(r) - r \in \fg \ot \fg$ whenever $g$ is a product of elements from simple root subgroups, and by \cref{prop:simplegenerators} any $g \in \Gg'$ is of this form.  Moreover, since $r$ lies in the zero weight space of $\fg \wh{\ot} \fg$ it is fixed by the Cartan subgroup $H$.  Since $\Gg$ is generated by $H$ and $\Gg'$, it follows that $\Ad_g(r) - r \in \fg \ot \fg$ for any $g \in \Gg$.  We have $\Ad_g(r) - r \in \bigwedge^2 \fg \subset \fg \ot \fg$ because the symmetric part of $r$ is adjoint invariant.  Finally, the fact that $\pi$ is regular follows from the fact that the adjoint action of $\Gg$ on $\bigwedge^2 \fg$ is regular.  
\end{proof}

By \cref{prop:bivector}, $\pi$ defines a continuous skew-symmetric bracket on $\CC[\Gg]$ satisfying the Leibniz rule.  That this bracket satisfies the Jacobi identity is a consequence of the fact that $r$ is a solution of the classical Yang-Baxter equation.  To make this precise for a general Kac-Moody group we must first introduce a certain dense subalgebra of $\CC[\Gg]$.  

Recall the embedding 
\[
\Gg \hookrightarrow V = \bigoplus_{i=1}^{\mathrm{dim}(H)} (L(\om_i) \oplus L(\om_i)^{\vee})
\]
used to define the ind-variety structure on $\Gg$.  The weight grading of $V$ expresses it as a direct sum $V = \bigoplus_{\al \in \Qq} V_{\al}$ of finite-dimensional subspaces.

\begin{defn}
The algebra of strongly regular functions on $V$ is the symmetric algebra of its graded dual, 
\[
\CC[V]_{\mathrm{s.r.}} = \Sym^*(\bigoplus_{\al \in \Qq} V^*_{\al}).
\]
The algebra $\CC[\Gg]_{\mathrm{s.r.}}$ of strongly regular functions on $\Gg$ is the image of $\CC[V]_{\mathrm{s.r.}}$ in $\CC[\Gg]$ under the restriction map.\footnote{Our use of the term ``strongly regular'' differs from that in section 2 of \cite{Kac1983}, but is consistent with Section 4 of loc. cited.}
\end{defn}

\begin{prop}\label{prop:sreg}
$\CC[\Gg]_{\mathrm{s.r.}}$ is a dense subalgebra of $\CC[\Gg]$.  For any $f \in \CC[\Gg]_{\mathrm{s.r.}}$ and $g \in \Gg$, $\ell_g^*(f)$ is again strongly regular, and the differential $d_ef$ lies in the graded dual $\fg^{\vee} := \bigoplus_{\al \in \Qq} \fg_{\al}^* \subset \fg^*$.
\end{prop}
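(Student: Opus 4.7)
The plan is to verify the three claims independently, using the concrete description of a strongly regular function on $\Gg$ as a polynomial in the pullbacks $g \mapsto \phi_i(gv^*)$ of weight-graded dual vectors $\phi_i \in V_{\be_i}^*$, where $v^* := \sum_i (v_i + v_i^\vee)$ defines the embedding $\Gg \hookrightarrow V$. For density and the subalgebra property, note that the ind-group structure on $\Gg$ is defined via $\Gg_n := \Gg \cap V_n$ with $V_n$ a finite direct sum of weight spaces, so $\CC[\Gg_n]$ is a quotient of the polynomial ring $\CC[V_n]$ whose linear generators lie in $\bigoplus_\al V_\al^*$. Thus the composition $\CC[V]_{\mathrm{s.r.}} \to \CC[V_n] \to \CC[\Gg_n]$ is surjective for every $n$, and since $\CC[\Gg]_{\mathrm{s.r.}}$ is by construction the image of $\CC[V]_{\mathrm{s.r.}}$ in $\CC[\Gg] = \varprojlim \CC[\Gg_n]$, it surjects onto each $\CC[\Gg_n]$ -- which is density in the inverse-limit topology. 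Being the image of a polynomial ring under a ring homomorphism, $\CC[\Gg]_{\mathrm{s.r.}}$ is automatically a subalgebra.

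For left-translation invariance, writing $f(h) = F(\phi_1(hv^*), \dots, \phi_n(hv^*))$ one has $\ell_g^* f(h) = F((g^*\phi_1)(hv^*), \dots, (g^*\phi_n)(hv^*))$, with $g^*$ the dual of the action of $g$ on $V$. The claim reduces to $g^*\phi \in V^\vee := \bigoplus_\al V_\al^*$ for every $\phi \in V_\be^*$, equivalently, for fixed $\be$, only finitely many $V_\al$ map nontrivially into $V_\be$ under $g$. By \cref{prop:simplegenerators} it suffices to check this on $H$ (trivial, as $H$ preserves weight spaces), on the one-parameter subgroups $x_{\pm\al}(t)$ with $\al$ simple, and under products. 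For $x_{\pm\al}(t) = \exp(te_{\pm\al})$, local nilpotency of $e_{\pm\al}$ on each integrable summand of $V$ gives $x_{\pm\al}(t)V_\ga \subset \bigoplus_{k \geq 0} V_{\ga \pm k\al}$ as a finite sum; for fixed $\be$, requiring $\ga = \be \mp k\al$ to be a weight of some $L(\om_i)$ or $L(\om_i)^\vee$ yields only finitely many $\ga$, by the finite length of real-root strings in integrable modules. Closure under products is immediate: if $T_\be := \{\ga : P_\be(g_1 V_\ga) \neq 0\}$ is finite, then $P_\be(g_1 g_2 V_\al) \neq 0$ forces $P_\ga(g_2 V_\al) \neq 0$ for some $\ga \in T_\be$, which by hypothesis on $g_2$ holds for only finitely many $\al$.

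For the differential, the chain rule applied to $f(h) = F(\phi_1(hv^*), \dots)$ at $h = e$ gives $d_e f = \sum_i c_i L_i$ with $L_i(X) := \phi_i(Xv^*)$ and $c_i \in \CC$. Since $v^*$ lies in the finite-dimensional subspace $\bigoplus_j(V_{\om_j} \oplus V_{-\om_j})$, for $X \in \fg_\al$ the vector $Xv^*$ lies in $\bigoplus_j(V_{\om_j + \al} \oplus V_{-\om_j + \al})$, so $L_i|_{\fg_\al} \neq 0$ forces $\be_i - \al \in \{\pm\om_j\}$ -- finitely many $\al$. Hence $L_i \in \fg^\vee$ and $d_e f \in \fg^\vee$.

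The main subtlety is the $g^*\phi \in V^\vee$ step: upgrading vector-level local nilpotency of the Chevalley generators to the global statement that only finitely many weight spaces map nontrivially into $V_\be$ requires both the finite length of real-root strings in integrable highest-weight modules and the reduction to simple-root subgroups provided by \cref{prop:simplegenerators}. This reduction is essential -- as the footnote to that proposition notes, Lie-algebraic generation does not automatically transfer to groups in this infinite-dimensional setting.
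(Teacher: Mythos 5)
Your proof is correct and follows essentially the same route as the paper, which disposes of density and the statement about $d_ef$ as immediate and justifies the left-translation claim by the single remark that the $\Gg$-action preserves graded duals of integrable modules; your argument simply fills in the details of that fact via \cref{prop:simplegenerators}, local nilpotency, and finiteness of simple-root strings. No gaps.
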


\begin{proof}
The first and last statements are immediate.  That $\ell_g^*(f)$ is strongly regular follows from the fact that the coadjoint action of $\Gg$ on the algebraic dual $\fg^*$ preserves the graded dual of $\fg$.
\end{proof}

\begin{prop}
The bracket on $\CC[\Gg]$ defined by the bivector $\pi(g) = \Ad_g(r) - r$ satisfies the Jacobi identity.
\end{prop}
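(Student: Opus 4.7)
The plan is to reduce the Jacobi identity to the classical Yang--Baxter equation via a continuity and density argument, using strongly regular functions as a test class whose differentials live in the graded dual $\fg^\vee$ and therefore pair unambiguously with elements of the completed tensor powers of $\fg$. Since the bracket defined by $\pi$ is continuous (by construction through \cref{prop:bivector}), the trilinear Jacobiator
\[
J(f_1,f_2,f_3) := \{f_1,\{f_2,f_3\}\} + \{f_2,\{f_3,f_1\}\} + \{f_3,\{f_1,f_2\}\}
\]
is separately continuous in each argument. By \cref{prop:sreg} the strongly regular functions are dense in $\CC[\Gg]$, so it suffices to verify $J(f_1,f_2,f_3) \equiv 0$ for $f_1,f_2,f_3 \in \CC[\Gg]_{\mathrm{s.r.}}$. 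The crucial feature of this class is that their left translates remain strongly regular, and the differentials $d_e\ell_g^*f_i$ lie in $\fg^\vee = \bigoplus_\al \fg_\al^* \subset \fg^*$, which pairs with elements of $\fg \wh{\ot} \fg \wh{\ot} \fg$ through convergent finite sums over weight components.

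For such $f_1,f_2,f_3$, I would evaluate $J$ at $g \in \Gg$ by a direct computation using multiplicativity of $\pi$. Writing $\xi_i(g) := d_e \ell_g^* f_i \in \fg^\vee$, the value $\{f_2,f_3\}(gx) = \pi(gx)(d_e\ell_{gx}^*f_2,d_e\ell_{gx}^*f_3)$ expands via the cocycle identity $\pi(gx) = \Ad_{x^{-1}}\pi(g) + \pi(x)$; differentiating in $x$ at $e$ and substituting into $\{f_1,\{f_2,f_3\}\}(g)$ produces a sum of terms in which the $\xi_i(g)$ are contracted against $\pi(g)$, $\Ad_g(r)$, and brackets of such tensors. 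Summing cyclically and performing the standard manipulation of Poisson--Lie theory (as in \cite[Prop.~4.1]{Kosmann-Schwarzbach1997}), the result is the identity
\[
J(f_1,f_2,f_3)(g) = \bigl\langle \Ad_g(\varphi) - \varphi,\ \xi_1(g) \wedge \xi_2(g) \wedge \xi_3(g) \bigr\rangle,
\]
where $\varphi = [r_{12},r_{13}] + [r_{12},r_{23}] + [r_{13},r_{23}] \in \fg \wh{\ot} \fg \wh{\ot} \fg$ is the classical Yang--Baxter tensor of $r$. Since $r$ is a solution of the classical Yang--Baxter equation (noted in the discussion preceding \cref{thm:rmatrix}), $\varphi = 0$, and therefore $J \equiv 0$ on $\CC[\Gg]_{\mathrm{s.r.}}^{\otimes 3}$; by density and continuity, $J \equiv 0$ on all of $\CC[\Gg]$.

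The main obstacle is the bookkeeping required to justify differentiating $\{f_2,f_3\}$ at $g$. Although the bracket of two strongly regular functions need not itself be strongly regular, its defining formula $\{f_2,f_3\}(x) = \pi(x)(d_e\ell_x^*f_2, d_e\ell_x^*f_3)$ expresses it purely in terms of $\pi$ and left translates of $f_2,f_3$, so its $x$-derivative at $g$ can be computed without leaving the realm in which all relevant covectors lie in $\fg^\vee$. Each individual bracket $[r_{ij},r_{k\ell}]$ must then be interpreted in $\fg \wh{\ot} \fg \wh{\ot} \fg$ rather than $\fg^{\otimes 3}$, but the graded-duality of the test data ensures every pairing reduces to a finite weight sum, so no convergence issue arises. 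Once this verification is performed, the classical Schouten calculation transplants intact to the Kac--Moody setting and the proof concludes.
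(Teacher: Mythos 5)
Your proposal is correct and follows essentially the same route as the paper: reduce to the dense subalgebra of strongly regular functions (whose differentials lie in the graded dual $\fg^\vee$, so all pairings with completed tensor powers are finite weight sums), carry out the standard finite-dimensional Poisson--Lie computation there to reduce the Jacobiator to the classical Yang--Baxter equation, and conclude by continuity of the bracket. The only difference is cosmetic: the paper packages the classical computation by splitting $\pi$ into the two bivectors $\Ad_g(r)$ and $r$ and cancelling their Jacobiators, whereas you express the Jacobiator directly as the pairing against $\Ad_g(\varphi)-\varphi$; both are the same standard manipulation.
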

\begin{proof}
We recall the proof when $\Gg$ is a semisimple algebraic group \cite{Kosmann-Schwarzbach1997}, and then explain the necessary adjustments in the general case.  First, we write the bracket as a difference of the two brackets $\{,\}_1$ and $\{,\}_2$ defined by the bivectors $\pi_1(g) = \Ad_g(r)$ and $\pi_2(g) = r$.  Now consider separately the expressions
\[
\{ \phi, \{ \psi, \xi\}_i \}_i + \{ \psi, \{ \xi, \phi\}_i \}_i + \{ \xi, \{ \phi, \psi\}_i \}_i
\]
for $i\in\{1,2\}$ and $\phi,\psi \in \CC[\Gg]$.  On writing these out explicitly in terms of $r$ one sees that half of the terms vanish by the Yang-Baxter equation, while the remaining terms are the same for both $\{,\}_1$ and $\{,\}_2$.  Thus they cancel when we take the difference of $\{,\}_1$ and $\{,\}_2$, yielding the Jacobi identity for the original bracket. 

When $\Gg$ is infinite-dimensional, this argument fails since $\pi_1$ and $\pi_2$ are not finite-rank bivectors in the sense of \cref{prop:bivector}.  However, in light of \cref{prop:sreg}, they do define biderivations $\{,\}_1$ and $\{,\}_2$ on the algebra of strongly regular functions on $\Gg$.  Moreover, the Yang-Baxter equation implies the Jacobi identity for the bracket on $\CC[\Gg]_{\mathrm{s.r.}}$ by an identical computation as in the finite-dimensional case.  But since $\CC[\Gg]_{\mathrm{s.r.}}$ is dense in $\CC[\Gg]$ and the bracket is continuous, the proposition follows.  
\end{proof}

We call the resulting Poisson structure on $\Gg$ the standard Poisson structure.  It is essentially characterized by the following proposition.

\begin{prop}\label{prop:poissonsubgroups}
$\Gg'$ and $H$ are Poisson subgroups of $\Gg$, the latter with the trivial Poisson structure.  For any simple root $\al$, $G_{\al}$ is a Poisson subgroup isomorphic with $SL_2^{(d_\al)}$.
\end{prop}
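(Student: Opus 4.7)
The plan is to verify each claim by examining the bivector $\pi(g) = \Ad_g(r) - r$ directly. For $H$, I note that $r = \Om_0 + 2\Om_{+-}$ lies in the zero-weight subspace of $\fg \wh{\ot} \fg$: $\Om_0 \in \hh \ot \hh$ is manifestly $H$-invariant, and $\Om_{+-}$ pairs root spaces of opposite weight and so is also $H$-invariant. Hence $\pi$ vanishes identically on $H$, proving $H$ is a Poisson subgroup with the zero bracket. For $\Gg'$, I exploit the multiplicativity $\pi(gh) = \Ad_{h^{-1}}\pi(g) + \pi(h)$ together with the fact that $\fg'$ is an ideal of $\fg$ (so that $\Ad_{\Gg'}$ preserves $\bigwedge^2 \fg'$). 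Thus $\{g \in \Gg : \pi(g) \in \bigwedge^2 \fg'\}$ is a subgroup of $\Gg'$, and by \cref{prop:simplegenerators} it is enough to verify the containment when $g \in G_\al$ for a single simple root $\al$.

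Fix such an $\al$ and appeal to the decomposition used in the proof of \cref{thm:rmatrix}: writing $\fg = \bigoplus_{[\be] \in \Qq/\ZZ\al} \fg_{[\be]}$ and $r = r_\al + \sum_{[\be] \neq [\al],\, \be > 0} r_{[\be]}$, all summands with $[\be] \neq [\al]$ are $G_\al$-invariant, so $\pi(g) = \Ad_g(r_\al) - r_\al$ with $r_\al \in \fg_{[\al]} \ot \fg_{[\al]}$ and $\fg_{[\al]} = \hh \oplus \fg_\al \oplus \fg_{-\al}$. I now refine this as an orthogonal decomposition $\fg_{[\al]} = \sl_2(\al) \oplus \ker\al$, where $\sl_2(\al) := \CC e_\al \oplus \CC h_\al \oplus \CC f_\al$. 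Orthogonality follows from the identity $(h_\al, h) = \tfrac{2}{(\al,\al)}\al(h) = 0$ for $h \in \ker \al$, combined with the standard vanishing of the form between distinct root spaces. Hence $r_\al = r_\al^{\sl_2} + r_\al^{\ker}$ with $r_\al^{\sl_2} \in \sl_2(\al)^{\ot 2}$ and $r_\al^{\ker} \in (\ker\al)^{\ot 2}$. Since every $h \in \ker\al$ commutes with $e_\al, h_\al, f_\al$, the subspace $\ker\al$ is fixed pointwise by $\Ad_{G_\al}$, and therefore $\Ad_g(r_\al^{\ker}) = r_\al^{\ker}$. Consequently $\pi(g) = \Ad_g(r_\al^{\sl_2}) - r_\al^{\sl_2} \in \bigwedge^2 \sl_2(\al) \subset \bigwedge^2 \fg'$. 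This simultaneously shows $\Gg'$ is a Poisson subgroup and that $G_\al$ is a Poisson subgroup with bivector $g \mapsto \Ad_g(r_\al^{\sl_2}) - r_\al^{\sl_2}$.

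To identify the induced structure on $G_\al$, I compute $r_\al^{\sl_2}$ explicitly in the basis $h_\al, e_\al, f_\al$. The normalization $(\al,\al) = 2d_\al$ gives $(h_\al, h_\al) = 4/(\al,\al) = 2/d_\al$, and invariance $([h_\al, e_\al], f_\al) = (h_\al, h_\al)$ together with $[h_\al, e_\al] = 2 e_\al$ yields $(e_\al, f_\al) = 1/d_\al$. Writing each summand of $r_\al^{\sl_2}$ in terms of the resulting dual basis gives
\[
r_\al^{\sl_2} = \tfrac{d_\al}{2}\, h_\al \ot h_\al + 2 d_\al\, e_\al \ot f_\al = d_\al\bigl(\tfrac12 h_\al \ot h_\al + 2\, e_\al \ot f_\al\bigr),
\]
which matches \eqref{eqn:sl2rmatrix} with parameter $d = d_\al$ under the identification $h_\al, e_\al, f_\al \leftrightarrow H, X, Y$. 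Hence $G_\al \cong SL_2^{(d_\al)}$ as Poisson-Lie groups.

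The main technical point is isolating the orthogonal refinement $\fg_{[\al]} = \sl_2(\al) \oplus \ker\al$ and verifying that $r_\al$ splits cleanly along it; once that is in hand, the $G_\al$-invariance of $\ker\al$ collapses the calculation into $\sl_2(\al)^{\ot 2}$ and delivers all three conclusions at once.
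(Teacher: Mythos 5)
Your proof is correct. It agrees with the paper's for $H$ (both observe that $r$ has weight zero, so $\Ad_h(r)=r$), but it takes a genuinely different route for $\Gg'$ and is more careful than the paper for $G_\al$. For $\Gg'$ the paper argues in one line: the invariant symmetric part $\Om$ of $r$ cancels in $\pi(g)=\Ad_g(r)-r$, so only the skew part $\Om_{+-}-\Om_{-+}\in\nn_+\wh{\ot}\nn_-\oplus\nn_-\wh{\ot}\nn_+\subset\fg'\wh{\ot}\fg'$ contributes; since $\fg'$ is an ideal this subspace is $\Ad$-stable, giving $\pi(g)\in\bigwedge^2\fg'$ for \emph{every} $g\in\Gg$ with no generation argument needed. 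Your route instead bootstraps from the $G_\al$ computation via multiplicativity and \cref{prop:simplegenerators}; this works (note only the slip that $\{g\in\Gg:\pi(g)\in\bigwedge^2\fg'\}$ is a subgroup of $\Gg$ containing the $G_\al$, not a subgroup of $\Gg'$), but it is longer and makes the $\Gg'$ claim logically dependent on the $G_\al$ claim. For $G_\al$ the paper merely cites the root-string decomposition from \cref{thm:rmatrix} and asserts that the component of $r$ in $\mathrm{Lie}(G_\al)$ is the $r$-matrix of $SL_2^{(d_\al)}$; strictly speaking the element $r_\al\in\fg_{[\al]}\ot\fg_{[\al]}$ appearing there contains all of $\Om_0$, not just its $h_\al\ot h_\al$ piece, so your orthogonal refinement $\fg_{[\al]}=\sl_2(\al)\oplus\ker\al$, the observation that $r_\al^{\ker}$ is fixed by $\Ad_{G_\al}$, and the normalization check $(h_\al,h_\al)=2/d_\al$, $(e_\al,f_\al)=1/d_\al$ supply exactly the details the paper leaves implicit. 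In short, your version buys a fully self-contained verification that the induced structure is $SL_2^{(d_\al)}$, while the paper's version buys the quicker and slightly stronger statement that the bivector is tangent to $\fg'$ at every point of $\Gg$.
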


\begin{proof}
We know that only the skew-symmetric part of $r$, which lies in $\nn_+ \wh{\otimes} \nn_- \oplus \nn_- \wh{\otimes} \nn_+ \subset \fg' \wh{\ot} \fg'$, contributes to the Poisson bivector, proving the claim for $\Gg'$.  The statement about $H$ follows from the observation that $r$ lies in the zero weight space of $\fg \wh{\ot} \fg$, hence $Ad_h(r) - r = 0$ for any $h \in H$.

In the proof of \cref{thm:rmatrix} we found that for $g \in G_{\al}$, $\pi(g) = \Ad_g(r_{[\al]}) - r_{[\al]}$, where $r_{[\al]}$ is the component of $r$ in the Lie algebra of $G_{\al}$.  But from the definition of $r$ and \cref{eqn:sl2rmatrix}, it is clear that $r_{[\al]}$ is precisely the $r$-matrix of $SL_2^{(d_\al)}$, and the proposition follows.
\end{proof}

\begin{prop}\label{prop:commuting}
\emph{(\cite[12.24]{Reyman1994})}
If $\phi, \psi \in \CC[\Gg]$ are invariant under conjugation, then 
\[
\{\phi,\psi\} = 0.
\]
\end{prop}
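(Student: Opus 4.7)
The plan is to adapt the classical $r$-matrix proof to the infinite-dimensional setting, the main issue being that $r \in \fg \wh{\otimes} \fg$ lives in a completion and so cannot be paired individually with arbitrary pairs of elements of $\fg^*$.

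First I would verify the key invariance property of the differentials. Conjugation invariance is equivalent to $\ell_g^*\phi = r_g^*\phi$ for every $g \in \Gg$. Differentiating at the identity and using the relation $d_e r_g^*\phi = \Ad_{g^{-1}}^* d_e \ell_g^*\phi$, which follows from $\exp(tX)\, g = g\exp(t\Ad_{g^{-1}}X)$, one concludes that $\xi := d_e\ell_g^*\phi$ is fixed by $\Ad_g^*$, and similarly for $\eta := d_e\ell_g^*\psi$.

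Second, I would dispatch the case when $\phi$ and $\psi$ are both strongly regular. Then $\xi, \eta \in \fg^{\vee}$ by \cref{prop:sreg}, so the pairings $\langle r, \xi \wedge \eta\rangle$ and $\langle \Ad_g r, \xi \wedge \eta\rangle$ make sense individually: only finitely many of the root-graded components of $r$ contribute, since $\xi$ and $\eta$ have finite support in the graded dual. The $\Ad_g^*$-invariance then gives
\[
\{\phi,\psi\}(g) = \langle \Ad_g r - r, \xi \wedge \eta\rangle = \langle r, \Ad_g^*\xi \wedge \Ad_g^*\eta\rangle - \langle r, \xi \wedge \eta\rangle = 0.
\]

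Third, I would extend to general conjugation-invariant $\phi, \psi \in \CC[\Gg]$. Inspecting the previous computation shows that it only requires one of $\xi, \eta$ to lie in $\fg^{\vee}$, so $\{\phi, \psi\} = 0$ already whenever $\phi$ is conjugation-invariant and $\psi$ is both strongly regular and conjugation-invariant. Continuity of the bracket in its second argument then lets one take $\psi$ to be any conjugation-invariant function, provided strongly regular conjugation-invariant functions are dense in the space of conjugation-invariant functions. Verifying this density is the subtlest point of the argument; in the affine case of primary interest here it can be established directly, for instance via traces of finite-dimensional representations pulled back along evaluation homomorphisms, which furnish sufficiently many strongly regular invariants.
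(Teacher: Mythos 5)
Your steps (1) and (2) are exactly the paper's proof: the paper computes $\{\phi,\psi\}(g) = \langle \Ad_g(r) - r \,|\, d\phi \wedge d\psi\rangle = \langle r \,|\, \Ad_g^*(d\phi\wedge d\psi) - d\phi\wedge d\psi\rangle = 0$ directly for arbitrary conjugation-invariant $\phi,\psi$, using \cref{thm:rmatrix} to know that $\Ad_g(r)-r$ is an honest element of $\bigwedge^2\fg$ and the $\Ad_g^*$-invariance of the translated differentials (which is your step (1), left implicit in the paper). You are right that the middle equality involves pairing $r \in \fg\wh{\otimes}\fg$ against covectors, which is only termwise finite when the covectors lie in the graded dual; your restriction to strongly regular functions in step (2) is a legitimate way to make that manipulation unambiguous, and that part is fine.

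The problem is step (3), which is where you depart from the paper and where the argument breaks. First, the claim that the step-(2) computation needs only one of $\xi,\eta$ to lie in $\fg^\vee$ is not justified: it works for $\langle r\,|\,\xi\otimes\eta\rangle$ because $r$ pairs $\fg_\alpha$ with $\fg_{-\alpha}$, but the term $\langle \Ad_g(r)\,|\,\xi\otimes\eta\rangle$ is a sum over $\alpha$ of $\langle\xi,\Ad_g e_\alpha\rangle\langle\eta,\Ad_g f_\alpha\rangle$, and $\Ad_g$ does not preserve the root grading, so finiteness of the support of $\xi$ alone does not truncate the sum. So at best you get $\{\phi,\psi\}=0$ when \emph{both} functions are strongly regular, which forces a double approximation. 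Second, and more seriously, the density of strongly regular conjugation-invariant functions inside \emph{all} conjugation-invariant functions does not follow from \cref{prop:sreg}: density of $\CC[\Gg]_{\mathrm{s.r.}}$ in $\CC[\Gg]$ does not restrict to a closed subspace unless you have a continuous projection onto conjugation-invariants preserving strong regularity, and no such averaging operator is available for a noncompact ind-group. Your proposed substitute (evaluation characters in the affine case) produces \emph{some} strongly regular invariants, but gives no control on whether they are dense in the full space of invariant regular functions, and says nothing in the general symmetrizable case that the proposition actually covers. The paper avoids this entire detour by pairing only the finite-rank difference $\Ad_g(r)-r$ against $d\phi\wedge d\psi$ and reading the second line as the single pairing $\langle r\,|\,\Ad_g^*(d\phi\wedge d\psi)-d\phi\wedge d\psi\rangle$ of $r$ against zero; if you want a fully airtight argument you should justify that one adjointness identity for the completed pairing rather than route through a density statement you cannot establish.
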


\begin{proof}
At any $g \in \Gg$ we check that
\begin{align*}
\{\phi, \psi\}(g) &= \langle \Ad_g(r) - r | d\phi \wedge d\psi \rangle \\
\quad &= \langle r | \Ad^*_g (d\phi \wedge d\psi) - d\phi \wedge d\psi \rangle \\
\quad &= 0,
\end{align*}
since $\Ad^*_g (d\phi \wedge d\psi) = d\phi \wedge d\psi$ by assumption.  
\end{proof}

\subsection{Double Bruhat Cells and Symplectic Leaves}\label{sec:symp}
In this section we show that the double Bruhat cells of a symmetrizable Kac-Moody group $\Gg$ are Poisson subvarieties, and in particular obtain a decomposition of $\Gg$ into symplectic leaves.  Recall that the symplectic leaves of a finite-dimensional Poisson manifold are the orbits of its piecewise Hamiltonian flows, have canonical symplectic structures, and define a generalized foliation of $\Gg$.  The existence of symplectic leaves in $\Gg$ is nontrivial, since a vector field on a general ind-variety need not have integral curves even if the ind-variety is smooth.  

We will obtain an explicit characterization of the symplectic leaves of $\Gg$ in \cref{thm:leaves}, but first we offer an elementary proof of their existence.  We will use \cref{prop:fdimdbc,prop:pfact} from \cref{sec:dbc}, but their proofs do not rely on the results of this section.

\begin{prop}\label{prop:poisdbc}
The double Bruhat cells $\Gg^{u,v}$ are Poisson subvarieties of $\Gg$.  
\end{prop}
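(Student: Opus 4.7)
The plan is to prove that at every $g \in \Gg^{u,v}$ the Poisson bivector $\pi(g) = \Ad_g(r) - r \in \bigwedge^2 \fg$, with $\fg$ identified with $T_g \Gg$ via right translation, lies in $\bigwedge^2 T_g \Gg^{u,v}$. Since $\Gg^{u,v}$ is smooth by \cref{prop:fdimdbc}, this bivectorial tangency is equivalent via \cref{prop:bivector} to the vanishing ideal of $\Gg^{u,v}$ in $\CC[\Gg]$ being a Poisson ideal, which is the defining property of a Poisson subvariety.

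The first step is structural. Since $r = \Om_0 + 2\Om_{+-}$ lies in $\bb_+ \wh\ot \bb_-$, and hence $\Ad_g r \in \Ad_g \bb_+ \wh\ot \Ad_g \bb_-$, the bivector $\pi(g)$ has its first-slot span in $\bb_+ + \Ad_g \bb_+$ and its second-slot span in $\bb_- + \Ad_g \bb_-$. Antisymmetry of $\pi(g)$ forces these two slot spans to coincide, so both must lie in
\[
V_g := (\bb_+ + \Ad_g \bb_+) \cap (\bb_- + \Ad_g \bb_-),
\]
yielding $\pi(g) \in \bigwedge^2 V_g$.

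The second step is geometric. Differentiating the factorization $g = b_+ \dot u b_+' \in \Bb_+ \dot u \Bb_+$ gives $T_g(\Bb_+ \dot u \Bb_+) \cdot g^{-1} = \bb_+ + \Ad_g \bb_+$, with the analogous identity for the negative cell, so $T_g \Gg^{u,v} \cdot g^{-1} \subseteq V_g$ automatically. To prove the reverse inclusion I would bound $\dim V_g$ via the triangular decomposition $\fg = \nn_- \oplus \hh \oplus \nn_+$. The corresponding projections send $V_g$ into subspaces of $\nn_-, \hh, \nn_+$: since the projection $p_-$ to $\nn_-$ kills $\bb_+$, the image $p_-(V_g)$ factors through $p_-(\Ad_g \bb_+)$, which coincides with the tangent space of the Schubert cell $\Bb_+ \dot u \Bb_+/\Bb_+$ at $g\Bb_+$ and thus has dimension $\ell(u)$; symmetrically the $\nn_+$ projection has dimension at most $\ell(v)$; and the $\hh$ projection has dimension at most $\dim H$. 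Summing gives $\dim V_g \leq \ell(u) + \ell(v) + \dim H$, which by \cref{prop:fdimdbc} equals $\dim T_g \Gg^{u,v}$, forcing $V_g = T_g \Gg^{u,v} \cdot g^{-1}$ and completing the proof.

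The main obstacle is this dimension count: in the Kac-Moody setting each of $\bb_\pm + \Ad_g \bb_\pm$ has infinite codimension in $\fg$, so the one-line codimension count $\dim V_g = \dim(\bb_+ + \Ad_g \bb_+) + \dim(\bb_- + \Ad_g \bb_-) - \dim \fg$ that handles the finite-dimensional semisimple case is unavailable. The triangular-projection argument sketched above, supplemented by the explicit factorization coordinates of \cref{prop:pfact} to pin down the dimension of $p_-(\Ad_g \bb_+)$, provides the necessary finite-dimensional reduction.
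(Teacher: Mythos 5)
Your proposal is essentially correct, but it takes a genuinely different route from the paper. The paper's proof is global and almost formal: it invokes the dominant Poisson map $\phi_{\mb{i}}$ of \cref{prop:pfact}, so that $\ker\phi_{\mb{i}}^*$ is a Poisson ideal, whence the closure $\bigcup_{u'\leq u,\, v'\leq v}\Gg^{u',v'}$ is a Poisson subvariety, and then observes that $\Gg^{u,v}$ is the complement of a divisor in this affine closure and so inherits the bracket. Your proof is the classical infinitesimal tangency argument ($\pi(g)\in\bigwedge^2 V_g$ with $V_g=(\bb_++\Ad_g\bb_+)\cap(\bb_-+\Ad_g\bb_-)$), adapted to infinite dimensions by replacing the codimension count with projections onto the triangular decomposition; what it buys is a pointwise description of where the bivector lives, independent of the factorization machinery, while the paper's argument buys brevity at the cost of depending on \cref{prop:pfact}. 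Three points in your sketch need repair, though none is fatal. First, the slot-span containment is not purely formal because $r$ and $\Ad_g(r)$ live only in $\fg\wh{\otimes}\fg$: you should pair $\pi(g)$ against functionals in the graded dual $\fg^{\vee}$ annihilating $\bb_++\Ad_g\bb_+$, and use that $\bb_++\Ad_g\bb_+=\Ad_{b_1}(\bb_+\oplus\bigoplus_{\al>0,\,u(\al)<0}\fg_{u(\al)})$ for $g=b_1\dot u b_2$ is the image under $\Ad_{b_1}$ of a graded subspace, so that it equals its double annihilator. Second, $p_-(\Ad_g\bb_+)\cong \Ad_g\bb_+/(\bb_+\cap\Ad_g\bb_+)$ is not the tangent space of the Schubert cell at $g\Bb_+$ (that is the other quotient $\bb_+/(\bb_+\cap\Ad_g\bb_+)$); in finite dimensions the two have equal dimension by subtraction, which is unavailable here, but the same conjugation by $b_1$ reduces to $g=\dot u$, where the quotient is visibly spanned by the $\ell(u)$ root spaces $\fg_{u(\al)}$ with $\al>0$, $u(\al)<0$. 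Third, tangency of $\pi$ shows that the vanishing ideal of the \emph{closure} of $\Gg^{u,v}$ is a Poisson ideal; to equip the locally closed cell $\Gg^{u,v}$ itself with the induced bracket you still need the localization step the paper takes via \cite[2.35]{Vanhaecke2001}.
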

\begin{proof}
In \cref{prop:pfact} we construct dominant Poisson map $\phi_{\mb{i}}$ from a Poisson variety to $\Gg^{u,v}$.  It follows that the closure of $\Gg^{u,v}$ in $\Gg$ is a Poisson subvariety: the kernel of $\phi_{\mb{i}}^*$ in $\CC[\Gg]$ is an open Poisson ideal, hence the closure of $\Gg^{u,v}$ is the (maximal) spectrum of the Poisson algebra $\CC[\Gg]/\mathrm{ker}\phi_{\mb{i}}^*$.  The closure of $\Gg^{u,v}$ is can be explicitly written as
\[
\bigcup_{u'\leq u, v' \leq v} \Gg^{u',v'},
\]
and in particular $\Gg^{u,v}$ is the complement of a divisor in its closure.  But such an open subset of an affine Poisson variety inherits a canonical Poisson structure \cite[2.35]{Vanhaecke2001}.
\end{proof}

\begin{corollary}
The group $\Gg$ is the disjoint union of finite-dimensional symplectic leaves.  
\end{corollary}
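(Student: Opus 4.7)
The plan is to reduce to the finite-dimensional setting using the double Bruhat decomposition of \cref{prop:Bruhat}. We have the set-theoretic disjoint union $\Gg = \bigsqcup_{(u,v) \in W \times W} \Gg^{u,v}$, and \cref{prop:poisdbc} tells us each $\Gg^{u,v}$ is a Poisson subvariety of $\Gg$. Granting the independently-proved facts of \cref{sec:dbc} that each $\Gg^{u,v}$ is a smooth finite-dimensional variety (\cref{prop:fdimdbc}), the classical theory of symplectic leaves on finite-dimensional Poisson varieties applies directly to each cell.

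The key observation linking the two pictures is that because $\Gg^{u,v}$ is a Poisson subvariety, any Hamiltonian vector field $X_f$ associated to $f \in \CC[\Gg]$ is tangent to $\Gg^{u,v}$ at every point of that cell; equivalently, the restriction of $X_f$ to $\Gg^{u,v}$ is the Hamiltonian vector field of $f|_{\Gg^{u,v}}$ with respect to the induced Poisson structure. Consequently, any piecewise Hamiltonian trajectory of $\Gg$ passing through a point of $\Gg^{u,v}$ remains in that cell. Invoking the standard existence theorem for symplectic leaves on the finite-dimensional Poisson variety $\Gg^{u,v}$, we obtain a decomposition of $\Gg^{u,v}$ into finite-dimensional symplectic leaves; these are simultaneously the orbits of piecewise Hamiltonian flows of $\Gg$ meeting $\Gg^{u,v}$.

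Taking the union of these leaf decompositions over all $(u,v) \in W \times W$ and using the disjointness of the double Bruhat cells yields the desired partition of $\Gg$ into finite-dimensional symplectic leaves. The only real subtlety is that on a general ind-variety the notion of symplectic leaf is problematic, since vector fields need not integrate to flows; confining the flows to the finite-dimensional Poisson subvarieties $\Gg^{u,v}$ is precisely what sidesteps this difficulty, and is the essential reason the proof is so short given the two inputs \cref{prop:poisdbc} and \cref{prop:fdimdbc}.
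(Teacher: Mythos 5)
Your proposal is correct and follows exactly the paper's route: the paper's own proof is the one-line observation that the claim "follows from \cref{prop:poisdbc} and the fact that double Bruhat cells are smooth and finite-dimensional (\cref{prop:fdimdbc})," and you have simply spelled out the intermediate reasoning (tangency of Hamiltonian fields to Poisson subvarieties, disjointness of the cells) that the paper leaves implicit.
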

\begin{proof}
Follows from \cref{prop:poisdbc} and the fact that double Bruhat cells are smooth and finite-dimensional (\cref{prop:fdimdbc}).
\end{proof}

We can get a more precise description of the symplectic leaves of $\Gg$ by introducing the dual group $\Gg^{\vee}$ and the double group $\Dd$.  These are ind-groups defined by
\[
\Gg^{\vee} := \{(b_-,b_+) \in \Bb_- \times \Bb_+\: |\: [b_-]_0 = [b_+]_0^{-1}\}, \quad \Dd := \Gg \times \Gg.
\]
The dual group $\Gg^{\vee}$ sits inside $\Dd$ in the obvious way, and we view $\Gg$ as a subgroup of $\Dd$ via its diagonal embedding.  

\begin{thm}\label{thm:leaves}
The symplectic leaves of a symmetrizable Kac-Moody group $\Gg$ are the connected components of its intersections with the double cosets of $\Gg^{\vee}$ in $\Dd$.
\end{thm}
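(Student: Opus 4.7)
The plan is to adapt Semenov-Tian-Shansky's classical identification of symplectic leaves of a Poisson-Lie group with dressing orbits. The first step is to exhibit the Lie algebra of $\Gg^\vee$, namely
\[
\mathrm{Lie}(\Gg^\vee) = \{(\xi_-,\xi_+) \in \bb_- \oplus \bb_+ : [\xi_-]_0 + [\xi_+]_0 = 0\},
\]
together with the diagonal subalgebra $\fg_{\mathrm{diag}} \subset \dd = \fg \oplus \fg$, as a pair of isotropic complements for the invariant form
\[
\langle (x_1,x_2),(y_1,y_2)\rangle_\dd := \langle x_1,y_1\rangle - \langle x_2,y_2\rangle
\]
(suitably completed, as for $\fg \wh{\otimes} \fg$).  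In other words, $(\dd, \fg_{\mathrm{diag}}, \mathrm{Lie}(\Gg^\vee))$ is a Manin triple.  Under the resulting identification of $\mathrm{Lie}(\Gg^\vee)$ with a dense subspace of $\fg_{\mathrm{diag}}^*$, the $r$-matrix $r = \Om_0 + 2\Om_{+-}$ of \cref{thm:rmatrix} is precisely the tensor implementing the projection $\dd \to \fg_{\mathrm{diag}}$ along $\mathrm{Lie}(\Gg^\vee)$, so the bracket on $\Gg$ is the canonical Poisson-Lie bracket attached to this Manin triple.

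Next, introduce the right dressing action of $\Gg^\vee$ on $\Gg$: for $h \in \Gg^\vee$ and $g \in \Gg$ near the identity, attempt the factorization
\[
(g,g) \cdot h = h' \cdot (g',g'), \qquad h' \in \Gg^\vee,\ g' \in \Gg,
\]
and set $g^h := g'$. By direct rearrangement, $g$ and $g'$ lie in the same dressing orbit if and only if $(g,g)$ and $(g',g')$ lie in the same $\Gg^\vee$-double coset of $\Dd$, so the theorem reduces to identifying dressing orbits with symplectic leaves. Differentiating the factorization at $h = e$ expresses the infinitesimal dressing by $\xi \in \mathrm{Lie}(\Gg^\vee)$ as the projection of $\Ad_{(g,g)^{-1}}\xi - \xi$ onto $\fg_{\mathrm{diag}}$ along $\mathrm{Lie}(\Gg^\vee)$; comparing with $\pi(g) = \Ad_g r - r$ identifies this vector field with the Hamiltonian vector field of the strongly regular linear function $g \mapsto \langle \xi, (g,g)\rangle_\dd$ on $\Gg$. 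As $\xi$ varies over $\mathrm{Lie}(\Gg^\vee)$, the differentials of these functionals span the image of $\pi^\#(g)$, so dressing vector fields and Hamiltonian vector fields span the same subspace of $T_g\Gg$.

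The main obstacle is that in infinite dimensions Hamiltonian flows need not integrate, so dressing orbits are a priori not globally well-defined. I would circumvent this by reducing to each double Bruhat cell: by \cref{prop:poisdbc} every $\Gg^{u,v}$ is a finite-dimensional smooth Poisson subvariety, and both the symplectic foliation and the partition by $\Gg \cap \Gg^\vee(g_1,g_2)\Gg^\vee$ refine the double Bruhat stratification, since the $\Bb_\pm$-biactions built into $\Gg^\vee$ preserve each Bruhat cell. Within each $\Gg^{u,v}$ the standard finite-dimensional Semenov-Tian-Shansky argument now applies directly: Hamiltonian vector fields integrate, the dressing action is globally defined, and its orbits coincide with the connected components of the symplectic leaves by the infinitesimal calculation above. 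Assembling the individual cells yields the theorem on all of $\Gg$.
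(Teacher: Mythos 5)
Your overall strategy is the one the paper follows: set up the Manin triple $(\dd,\fg,\fg^{\vee})$, realize the dressing action through factorization in the double $\Dd$, identify the infinitesimal dressing action with the characteristic distribution of $\pi$, and conclude that the dressing orbits are simultaneously the symplectic leaves and the connected components of the double coset intersections. Your first two paragraphs match \cref{eqn:double} and \cref{prop:dressing,prop:open,prop:action}, up to a minor slip: the dressing fields $X_{\mu}=\io_{\mu}(\pi)$ are not Hamiltonian vector fields of global functions (the invariant extension of $\mu\in\fg^{*}$ is a non-exact one-form, and $\langle \xi,(g,g)\rangle_{\dd}$ does not literally define a function on $\Gg$), but they do span the image of $\pi^{\#}$ at each point, which is all the argument needs.

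The gap is in your last paragraph. Restricting to $\Gg^{u,v}$ makes the Poisson variety finite-dimensional and smooth, and you are right that both partitions refine the Bruhat stratification; but the acting group $\Gg^{\vee}$ is still infinite-dimensional, so the ``standard finite-dimensional Semenov--Tian--Shansky argument'' does not apply off the shelf. Two things are asserted rather than proved. First, that the orbits of the (only locally defined) dressing flows are exactly the connected components of $\Gg^{u,v}\cap\Gg^{\vee}(g,g)\Gg^{\vee}$: one inclusion is clear from the factorization, but the other requires knowing that the double coset intersection is a smooth submanifold whose tangent space at every point is the span of the dressing fields, and passing to a double Bruhat cell does not supply this. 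The paper obtains it by a different device: \cref{prop:open} shows $\Gg\to\Dd/\Gg^{\vee}$ is \'etale onto an open set, and the $\Gg^{\vee}$-orbits on $\Dd/\Gg^{\vee}$ are smooth and finite-dimensional because the $\Bb_{\pm}$-orbits on the flag varieties are Schubert cells; their preimages in $\Gg$ are then smooth with the correct tangent spaces, hence unions of symplectic leaves. Second, the identification of the infinitesimal $\fg^{\vee}$-action with $\io_{\mu}(\pi)$ cannot simply be read off by ``differentiating the factorization'' in the ind-setting (the local action is, moreover, only defined modulo the finite group $\Ga=\Gg\cap\Gg^{\vee}$); the paper instead proves a uniqueness statement --- \cref{lem:multfields} shows a multiplicative polyvector field vanishing to second order at $e$ is zero, so by \cref{prop:dressing} the dressing fields are characterized by twisted multiplicativity plus linearization $\mathrm{ad}^{*}_{\mu}$ at $e$ --- and then checks in \cref{prop:action} that the pulled-back action has these two properties. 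Your cell-by-cell reduction is a perfectly good route to the \emph{existence} of finite-dimensional leaves (it is how the paper proves the corollary following \cref{prop:poisdbc}), but it does not by itself yield the smoothness of the double coset intersections, which is the real content of the theorem.
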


The proof of this theorem proceeds in several steps, closely following \cite{Lu1990} in the finite-dimensional case.  The idea of the proof remains the same, but we indicate how some arguments must be rephrased or altered to remain valid in the current setting.  In particular, one does not expect a priori to have such a theorem for arbitrary Poisson ind-groups, as at several points we must appeal to particular properties of Kac-Moody groups and their standard Poisson structure.

First note that the Lie algebra of $\Gg^{\vee}$ is 
\[
\fg^{\vee} = \{ (X_-,X_+) \in \bb_- \oplus \bb_+\: |\: [X_-]_0 = - [X_+]_0 \},
\]
where $[X_{\pm}]_0$ denotes the component of $X_{\pm}$ in $\hh$.  The Lie algebra $\dd = \fg \oplus \fg$ of $\Dd$ is then the direct sum of $\fg^{\vee}$ and $\fg$, the latter embedded diagonally.  Moreover, $\fg^{\vee}$ and $\fg$ are maximal isotropic subalgebras under the nondegenerate invariant form
\[
\langle(X_1,Y_1),(X_2,Y_2)\rangle = \langle X_1,X_2 \rangle - \langle Y_1,Y_2 \rangle.
\]
In particular, this form identifies $\fg^{\vee}$ with the graded dual of $\fg$, justifying its notation.\footnote{Though one can intrinsically define the Lie algebra structure on $\fg^*$ for an arbitrary Poisson ind-group (\cref{rmk:bialgebra}), one cannot expect the existence of a corresponding dual group in general, since Lie's third theorem fails in this generality.}

Given this identification, the bracket on $\dd$ can be rewritten in terms of the coadjoint actions of $\fg$ and $\fg^{\vee}$ on each other.  That is, if $X_1,X_2 \in \fg$ and $Y_1,Y_2 \in \fg^{\vee}$, then
\begin{gather}\label{eqn:double}
[(X_1,Y_1),(X_2,Y_2)] = ([X_1,X_2] + ad^*_{Y_1}X_2 - ad^*_{Y_2}X_1, [Y_1,Y_2] + ad^*_{X_1}Y_2 - ad^*_{X_2}Y_1).
\end{gather}

\begin{defn}
Let $\pi$ be the standard Poisson bivector on $\Gg$.  For any $\mu \in \fg^*$ we define the (left) dressing vector field as
\[
X_{\mu} := \io_{\mu}(\pi).
\]
\end{defn}
Taken together these yield a continuous map $X: \fg^* \wh{\ot} \CC[\Gg] \to \CC[\Gg]$ which is a derivation in the right component.  Furthermore, one can recover the Poisson bivector $\pi$ from $X$.  Explicitly, the map
\[
m \circ (X_{13} \wh{\ot} S_2) \circ (1 \wh{\ot} \De): \fg^* \wh{\ot} \CC[\Gg] \to \CC[\Gg]
\]
factors through $\fg^* \wh{\ot} \fg^*$ as in the proof of \cref{prop:bivector}, and is dual to the map $\pi: \Gg \to \bigwedge^2 \fg$.  Here $\De$ is the coproduct on $\CC[\Gg]$, $S$ is the antipode, $m$ is multiplication in $\CC[\Gg]$, and the notation $X_{13}$ means we apply $X$ to the first and third terms of $\fg^* \wh{\ot} \CC[\Gg] \wh{\ot} \CC[\Gg]$.

\begin{lemma}\label{lem:multfields}
Let $K$ be a multiplicative polyvector field.  (1) If $X$ is a left-invariant vector field, $\Ll_X K$ is also left-invariant.  Here $\Ll_X K$ is the Lie derivative of $K$ with respect to $X$. (2) If $d_e(K) = 0$, then $K$ is identically zero.
\end{lemma}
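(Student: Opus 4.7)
The plan is to reduce both parts to direct manipulations of the multiplicative identity
\[
K(gh) = \mathrm{Ad}_{h^{-1}} K(g) + K(h),
\]
from which setting $g = e$ immediately gives $K(e) = 0$.

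For (1), I work in the left-trivialization convention of \cref{prop:bivector} and compute the pullback under right translation. A direct chain-rule calculation shows that $R_h^* K$ corresponds to the function $g \mapsto \mathrm{Ad}_h K(gh)$, which after substituting the multiplicative identity collapses to
\[
(R_h^* K)(g) = K(g) + \mathrm{Ad}_h K(h).
\]
The flow of the left-invariant vector field $X_\xi$ associated to $\xi \in \fg$ is $R_{\exp(t\xi)}$, so differentiating at $t = 0$ and using $K(e) = 0$ yields $(\Ll_{X_\xi} K)(g) = d_e K(\xi)$, a constant in $g$, hence $\Ll_{X_\xi} K$ is left-invariant.

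For (2), I apply multiplicativity along a one-parameter subgroup. Setting $k(s) := K(\exp(s\xi))$ and differentiating the identity $k(s + s') = \mathrm{Ad}_{\exp(-s'\xi)} k(s) + k(s')$ in $s'$ at $0$ produces the linear ODE
\[
k'(s) = -\mathrm{ad}_\xi\, k(s) + d_e K(\xi),
\]
which under the hypothesis $d_e K = 0$, together with the initial condition $k(0) = 0$, has unique solution $k \equiv 0$. Thus $K$ vanishes on every one-parameter subgroup $\exp(\CC \xi)$, and multiplicativity then propagates this vanishing to any finite product. By \cref{prop:simplegenerators}, $\Gg'$ is generated by the simple-root $SL_2$ subgroups $G_\al$, each visibly a union of one-parameter subgroups; combined with the Cartan $H$, which is an ind-torus whose elements all lie in one-parameter subgroups via the complex exponential, this shows $\Gg$ itself is generated by one-parameter subgroups, so $K \equiv 0$ on $\Gg$.

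The delicate point is the final generation argument: as the footnote to \cref{prop:simplegenerators} stresses, one cannot deduce group-theoretic generation from Lie-algebraic generation in the infinite-dimensional Kac-Moody setting, so it is essential to route through \cref{prop:simplegenerators} rather than to attempt any formal exponential argument directly on $\fg$. Apart from this point the proof is a routine Leibniz-rule manipulation carried out entirely at the Lie algebra level, and never engages with subtleties of the ind-variety structure on $\Gg$.
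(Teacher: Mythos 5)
Your argument for part (2) is correct and genuinely more direct than the paper's: rather than passing through part (1), you extract from multiplicativity the linear ODE $k'(s) = -\mathrm{ad}_\xi\, k(s) + d_eK(\xi)$ for $k(s) = K(\exp(s\xi))$ and solve it with $k(0) = K(e) = 0$. Since you only ever need this for $\xi$ in a real root space or in $\hh$, where the one-parameter subgroup genuinely exists in $\Gg$ and $s \mapsto K(\exp(s\xi))$ takes values in a fixed finite-dimensional subspace of $\bigwedge^n \fg$, the integration is unproblematic, and routing the generation step through \cref{prop:simplegenerators} is exactly right (the paper concludes the same way). One small correction: $G_\al \cong SL_2(\CC)$ is \emph{not} a union of one-parameter subgroups, because $\exp$ is not surjective on $SL_2(\CC)$ (a non-semisimple element with both eigenvalues $-1$ is not an exponential). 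This is harmless: the vanishing locus of $K$ is a subgroup by multiplicativity and $K(g^{-1}) = -\Ad_g K(g)$, and $G_\al$ is generated by the subgroups $x_{\pm\al}(t)$, which is the argument you actually invoke.

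Part (1), however, has a real gap. You compute $\Ll_{X_\xi}K$ by differentiating $R_{\exp(t\xi)}^*K$ along the flow of $X_\xi$, which presumes that $\exp(t\xi)$ exists in $\Gg$ for \emph{every} $\xi \in \fg$. In the minimal Kac-Moody group this fails: an imaginary root vector $hz^n$ in affine type has $\exp(thz^n) = \sum_k t^k h^k z^{nk}/k!$, which lies in the formal but not the polynomial loop group, and already $\xi = e_0 + e_1 = fz + e$ in $\wt{LSL}_2$ satisfies $\xi^2 = z\cdot\mathrm{id}$, so its exponential involves unboundedly many powers of $z$. For such $\xi$ the left-invariant field has no flow in $\Gg$, and your identity $(\Ll_{X_\xi}K)(g) = d_eK(\xi)$ --- a nice sharpening where it applies --- is only established for the integrable directions, not for an arbitrary left-invariant vector field as the lemma asserts. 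The paper proves (1) in full generality by a purely algebraic computation: left-invariance of $X$ is the identity $\De \circ X = (1 \wh{\ot} X)\circ \De$ on $\CC[\Gg]$, multiplicativity of $K$ is $\De \circ K = (1 \wh{\ot} K)\circ\De + (K \wh{\ot} 1)\circ\De$, and composing these gives $\De \circ \Ll_XK = (1 \wh{\ot} \Ll_XK)\circ\De$ with no flow needed. Your closing remark that the proof ``never engages with subtleties of the ind-variety structure'' is therefore backwards at precisely this point: the possible nonexistence of integral curves is one of the infinite-dimensional subtleties this section is built to circumvent (see the discussion preceding \cref{prop:poisdbc}). Since your part (2) does not rely on part (1), the damage is contained, but as a proof of the lemma as stated the argument for (1) is incomplete.
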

\begin{proof}
We take $K$ to be a vector field, the higher rank case being similar.  

(1) Left-invariance of $X$ is equivalent to $\De \circ X = (1 \wh{\ot} X) \circ \De$, and multiplicativity of $K$ is equivalent to $\De \circ K = (1 \wh{\ot} K) \circ \De + (K \wh{\ot} 1) \circ \De$.  Then $\Ll_X K$ is left-invariant by the following equality of maps from $\CC[\Gg]$ to $\CC[\Gg] \wh{\ot} \CC[\Gg]$:
\begin{align*}
\De \circ \Ll_XK & = \De \circ (X \circ K - K \circ X)\\ 
&= (1 \wh{\ot} X) \circ (K \wh{\ot} 1 + 1 \wh{\ot} K) \circ \De - (K \wh{\ot} 1 + 1 \wh{\ot} K) \circ (1 \wh{\ot} X) \circ \De\\
&= (1 \wt{\ot} \Ll_XK) \circ \De
\end{align*}

(2) Since $d_e(K) = 0$, $\Ll_XK|_e = 0$ for any left-invariant $X$.  But $\Ll_XK$ is itself left-invariant by (1), hence is identically zero.  In particular, since we can integrate the left-invariant vector fields corresponding to the real root spaces, $K$ is invariant under left translation by the corresponding 1-parameter subgroups.  Since $\Gg$ is generated by these subgroups and $H = \mathrm{exp}(\hh)$, $K$ is invariant under all left-translations.  But $K$ is multiplicative, hence $K|_e = 0$ and $K$ must then be identically zero.
\end{proof}

\begin{prop}\label{prop:dressing}
The dressing fields $X_{\mu}$ satisfy the twisted multiplicativity condition
\[
X_{\mu}(gh) = X_{\mu}(h) + Ad_{h^{-1}}[ X_{Ad_{h^{-1}(\mu)}}(g)],
\]
and the derivative $d_e X_{\mu}: \fg \to \fg$ is the coadjoint action $\mathrm{ad}_{\mu}^*$.
Moreover, $X: \fg^* \wh{\ot} \CC[\Gg] \to \CC[\Gg]$ is the only continuous derivation satisfying these properties.
\end{prop}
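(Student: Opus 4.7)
The plan is to treat the three assertions in order: the twisted multiplicativity is obtained by contracting the multiplicativity of $\pi$ with $\mu$; the derivative identification $d_e X_\mu = \mathrm{ad}_\mu^{*}$ follows by first-order expansion; and uniqueness is obtained by an argument modeled on \cref{lem:multfields}(2), adapted to the twisted setting.

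For the first assertion, I start from the multiplicativity condition $\pi(gh) = \Ad_{h^{-1}}\pi(g) + \pi(h)$ of the standard bivector and contract with $\mu$. The only algebraic fact needed is that for any $K \in \bigwedge^{2}\fg$,
\[
\iota_{\mu}\bigl(\Ad_h K\bigr) \;=\; \Ad_h\,\iota_{\Ad_h^{*}\mu}(K),
\]
which is immediate on simple tensors. Applied with $h^{-1}$ in place of $h$ to the first summand $\Ad_{h^{-1}}\pi(g)$, this reads off the stated formula, with the convention that $\Ad_{h^{-1}}(\mu)$ denotes the coadjoint action $\Ad_{h^{-1}}^{*}\mu$ on $\fg^{*}$.

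For the derivative, one first notes $X_{\mu}(e) = \iota_{\mu}(0) = 0$. Evaluating along $g = \exp(tv)$, the expansion $\Ad_{\exp(tv)}(r) - r = t\cdot \mathrm{ad}_{v}(r) + O(t^{2})$ gives $d_{e}X_{\mu}(v) = \iota_{\mu}\bigl(\mathrm{ad}_{v}(r)\bigr)$. By construction the cocycle $d_{e}\pi$ is the coboundary of $r$, which (cf. \cref{rmk:bialgebra}) is the Lie cobracket $\delta:\fg \to \bigwedge^{2}\fg$ dual to the bracket on $\fg^{*}$; contracting $\delta(v)$ with $\mu$ then gives by definition the coadjoint action $\mathrm{ad}^{*}_{\mu} \in \mathrm{End}(\fg)$.

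For uniqueness, let $Y$ be another continuous derivation of $\CC[\Gg]$ in its second argument satisfying both properties and set $Z := X - Y$. Then $Z$ also satisfies the twisted multiplicativity formula and $d_{e}Z_{\mu}=0$. Taking $h = e$ yields $Z_{\mu}(e)=0$ for all $\mu$. Taking $g = \exp(tv)$ and differentiating at $t=0$, the right-hand side becomes $\Ad_{h^{-1}}\bigl(d_{e}Z_{\Ad_{h^{-1}}\mu}(v)\bigr) = 0$, while the left-hand side is the right-invariant vector field $v^{R}$ applied to $Z_{\mu}$ at $h$. Hence $v^{R}(Z_{\mu}) \equiv 0$ for every $v$ in a simple root space or in $\hh$. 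Integrating, $Z_{\mu}$ is invariant under left translation by each simple root $SL_{2}$-subgroup and by $H$; by \cref{prop:simplegenerators} these generate $\Gg$, so $Z_{\mu}$ is left-invariant on all of $\Gg$, and setting $h=e$ in the invariance together with $Z_{\mu}(e)=0$ forces $Z_{\mu} \equiv 0$.

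The main obstacle is the uniqueness step, since the twisted nature of the multiplicativity condition prevents direct appeal to \cref{lem:multfields}(2); one must rerun its argument, differentiating the twisted formula in the $g$-argument so that the derivative hypothesis on $Z_{\mu}$ annihilates the $\Ad_{h^{-1}}$-twisted term on the right. The appeal to \cref{prop:simplegenerators} is then essential: without it, the left-invariance of $Z_{\mu}$ along simple 1-parameter subgroups would not propagate to all of the ind-group $\Gg$.
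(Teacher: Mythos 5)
Your proof is correct, and the first two assertions are handled exactly as the paper intends (the paper omits these computations, remarking only that they resemble those of \cref{prop:bivector} and \cref{lem:multfields}). Where you genuinely diverge is the uniqueness step. You assert that the twisted form of the multiplicativity condition ``prevents direct appeal to \cref{lem:multfields}(2),'' and so you rerun that lemma's proof on the contracted fields $Z_\mu = X_\mu - Y_\mu$: differentiate the twisted identity in the $g$-slot at $g=e$, use $d_eZ_\nu = 0$ for \emph{all} $\nu$ (needed because the twist replaces $\mu$ by $\Ad_{h^{-1}}\mu$) to kill the right-hand side, and integrate along the simple root subgroups and $H$. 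The paper instead does appeal to the lemma directly: it converts the derivation $Y$ back into a bivector field $\wt{Y}$ via the formula $m\circ(Y_{13}\wh{\ot}S_2)\circ(1\wh{\ot}\De)$ stated just before the proposition, observes that twisted multiplicativity of $Y$ is equivalent to multiplicativity of $\wt{Y}$ and that $d_e\wt{Y}=d_e\pi$, and then applies \cref{lem:multfields}(2) to the multiplicative bivector $\pi-\wt{Y}$. Both arguments rest on the same mechanism --- vanishing derivative at $e$, multiplicativity, and generation of $\Gg$ by the simple root subgroups and $H$ via \cref{prop:simplegenerators} --- so the difference is one of packaging: yours avoids having to check that the reconstruction $Y\mapsto\wt{Y}$ is inverse to contraction, while the paper's route isolates the integration argument once and for all in the lemma. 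Your version is complete as written.
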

\begin{proof}
Twisted multiplicativity of the dressing fields follows readily from the definition of multiplicativity.  Likewise, the fact that $X_{\mu} = \mathrm{ad}_{\mu}^*$ follows from unwinding the definition of the bracket on $\fg^*$. We omit the calculations, which resemble those of \cref{prop:bivector,lem:multfields}.  

Suppose $Y: \fg^* \wh{\ot} \CC[\Gg] \to \CC[\Gg]$ is a continuous derivation and satisfies the given properties.  In the same way that we can recover $\pi$ from $X$, we recover a bivector field $\wt{Y}$ from $Y$.  The twisted multiplicativity of $Y$ is again equivalent to the multiplicativity of $\wt{Y}$, and $d_e \wt{Y} = d_e \pi$ since the derivatives of $X$ and $Y$ coincide at the identity.  The difference $\pi - \wt{Y}$ is then multiplicative bivector field whose derivative at the identity is zero.  Then by \cref{lem:multfields} $\pi - \wt{Y}$ is identically zero, hence $X = Y$.
\end{proof}

Consider the left action of $\Gg^{\vee}$ on $\Dd/\Gg^{\vee}$, and the induced action of $\fg^{\vee}$ by vector fields.  Note that the quotient of $\Dd/\Gg^{\vee}$ exists as an ind-variety; $\Dd/(\Bb_-\times \Bb_+)$ is a product of opposite affine Grassmannians, and $\Dd/\Gg^{\vee}$ is a torus bundle over it (compare with \cite[7.2]{Kumar2002}).  The fibers of the projection from $\Gg$ to $\Dd/\Gg^{\vee}$ are the orbits of right multiplication by $\Ga := \Gg \cap \Gg^{\vee}$.  This intersection is a finite group, specifically the group of square roots of the identity in $H$.  The image of $\Gg$ in $\Dd/\Gg^{\vee}$ is open by the following proposition and the fact that the quotient map $\Gg \to \Gg/\Bb_{\pm}$ is open \cite[7.4.10]{Kumar2002}.  

\begin{prop}\label{prop:open}
The image of the multiplication map $\Gg \times \Gg^{\vee} \to \Dd$, which is the same as the image of $\Gg \times (\Bb_- \times \Bb_+) \to \Dd$, is the open set $\{(g,g')\:|\: g^{-1}g' \in \Gg_0 \}$.  Here $\Gg_0$ is the image of $\Uu_- \times H \times \Uu_+$ in $\Gg$ as in \cref{prop:gaussian}.  Similarly, the image of $\Gg^{\vee} \times \Gg \to \Dd$ is the open set $\{(g,g')\:|\: g(g')^{-1} \in \Gg_0 \}$.
\end{prop}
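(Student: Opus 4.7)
The plan is to reduce both assertions to solving a single equation in the Cartan torus $H$, after which openness follows automatically from \cref{prop:gaussian}. For the first assertion, if $(h,h') = (gb_-, gb_+)$ is in the image of $\Gg \times \Gg^{\vee} \to \Dd$, then $h^{-1}h' = b_-^{-1}b_+$ lies in $\Bb_-\Bb_+ \subset \Uu_- H \Uu_+ = \Gg_0$, so the image is contained in $\{(g,g') : g^{-1}g' \in \Gg_0\}$.

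The nontrivial direction is the reverse inclusion. Given $(h,h')$ with $h^{-1}h' \in \Gg_0$, I would use \cref{prop:gaussian} to write $h^{-1}h' = u_- t u_+$ uniquely, and then search for $b_- \in \Bb_-$ of the form $b_- = s u_-^{-1}$ with $s \in H$; the element $b_+ := b_- h^{-1}h' = st u_+$ then lies in $\Bb_+$ automatically, and $g := hb_-^{-1}$ satisfies $gb_- = h$ and $gb_+ = h'$. The sole remaining constraint, that $(b_-, b_+) \in \Gg^{\vee}$, reduces to $s^2 = t^{-1}$, which is solvable in the complex torus $H$. The square root is unique up to the 2-torsion subgroup $\Ga = \Gg \cap \Gg^{\vee}$, consistent with the fiber structure of $\Gg \to \Dd/\Gg^{\vee}$ noted in the preceding discussion.

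The same square-root trick establishes the equality of the images of $\Gg \times \Gg^{\vee} \to \Dd$ and $\Gg \times (\Bb_- \times \Bb_+) \to \Dd$: given any factorization $(h,h') = (gb_-, gb_+)$ with $b_\pm \in \Bb_\pm$ arbitrary, twisting $g$ on the right by a suitable element of $H$ enforces the Cartan-compatibility condition $[b_-]_0[b_+]_0 = 1$ defining $\Gg^{\vee}$. Openness of the image then follows because $\Gg_0 \subset \Gg$ is open by \cref{prop:gaussian} and $(g,g') \mapsto g^{-1}g'$ is a regular map $\Dd \to \Gg$. The second assertion, concerning $\Gg^{\vee} \times \Gg \to \Dd$, is obtained by an entirely symmetric argument, replacing $h^{-1}h'$ by $h(h')^{-1}$ and moving the $b_\pm$ to the left. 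The only real content is identifying the correct Cartan equation to solve and noting that $H$ is closed under square roots; everything else is a rearrangement of the Gaussian factorization, so no serious obstacle is anticipated.
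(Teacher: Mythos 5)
Your proof is correct and follows essentially the same route as the paper: the forward inclusion via $\Bb_-\Bb_+ \subset \Gg_0$, and the reverse inclusion by Gaussian factorization of $g^{-1}g'$ together with extracting a square root in the Cartan torus to enforce the condition $[b_-]_0 = [b_+]_0^{-1}$ (the paper writes this as $g^{-1}g' = u_-h^2u_+$ and exhibits the factorization $(g,g')=(gu_-h,gu_-h)\cdot(h^{-1}u_-^{-1},hu_+)$, which is your $s^2=t^{-1}$ in disguise). The only cosmetic difference is that the paper deduces the second assertion by taking inverses of the two subsets rather than rerunning the symmetric argument.
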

\begin{proof}
If $(g,g')=(kb_-,kb_+)$ for some $k \in \Gg$, $(b_-,b_+) \in \Gg^{\vee}$, then $g^{-1}g' = b_-^{-1}b_+ \in \Gg_0$.  Conversely, if $g^{-1}g' \in \Gg_0$ choose $u_{\pm} \in \Uu_{\pm}$ and $h \in H$ such that $g^{-1}g' = u_-h^2u_+$.  Then in $\Dd$ we have the factorization
\[
(g,g')=(gu_-h,gu_-h)\cdot(h^{-1}u_-^{-1},hu_+),
\]
proving the first claim.  The second then follows by taking the inverses of the two subsets considered in the first statement. 
\end{proof}

In particular the map $\Gg \to \Dd/\Gg^{\vee}$ induces isomorphisms on the tangent spaces at every point.  Thus we can pull back vector fields on $\Dd/\Gg^{\vee}$ to vector fields on $\Gg$. 

\begin{prop}\label{prop:action}
Pulling back the vector fields on $\Dd/\Gg^{\vee}$ corresponding to the infinitesimal left action of $\fg^{\vee}$, we obtain exactly the dressing vector fields on $\Gg$.
\end{prop}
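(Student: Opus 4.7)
The plan is to invoke the uniqueness part of \cref{prop:dressing}: the dressing derivation is the unique continuous derivation $\fg^* \wh{\ot} \CC[\Gg] \to \CC[\Gg]$ satisfying twisted multiplicativity together with $d_e X_\mu = \mathrm{ad}^*_\mu$.  It therefore suffices to verify that the pullback under $\Gg \hookrightarrow \Dd/\Gg^\vee$ of the vector fields generated by the left $\fg^\vee$-action on $\Dd/\Gg^\vee$ assembles into a continuous derivation with these two properties (using $\fg^\vee \cong \fg^*$ via the $\dd$-pairing).  Well-definedness and continuity of this pullback are immediate from the regularity of the $\Gg^\vee$-action and the open embedding $\Gg \hookrightarrow \Dd/\Gg^\vee$ provided by \cref{prop:open}.

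To work with the pullback $Y_\mu$ concretely, I would factor
\[
\exp(t\mu) \cdot g = g(t) \cdot \gamma(t)
\]
in $\Dd$, with $g(t) \in \Gg$, $\gamma(t) \in \Gg^\vee$, $g(0) = g$, $\gamma(0) = e$; such a factorization exists for small $t$ by \cref{prop:open}.  Differentiating at $t = 0$ and invoking the Lagrangian decomposition $\dd = \fg \oplus \fg^\vee$ to separate the $\fg$-part from the $\fg^\vee$-part of $\Ad_{g^{-1}}(\mu)$ identifies the left-trivialized value of $Y_\mu(g)$ with the projection of $\Ad_{g^{-1}}(\mu)$ onto $\fg$ along $\fg^\vee$.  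Linearizing at $g = e$ then gives $d_e Y_\mu(Z) = \mathrm{pr}_\fg([\mu, Z])$, which is exactly $\mathrm{ad}^*_\mu(Z)$ by the description of the $\fg^*$-bracket via \cref{eqn:double}.

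For the twisted multiplicativity, I would iterate the factorization: given $g, h \in \Gg$, after writing $\exp(t\mu) g = g(t) \gamma(t)$ as before, further factor $\gamma(t) h = k(t) \eta(t)$ in $\Dd$, with $k(t) \in \Gg$, $\eta(t) \in \Gg^\vee$, $k(0) = h$, $\eta(0) = e$.  Then $\exp(t\mu) gh = g(t) k(t) \eta(t)$, so $Y_\mu(gh) = \tfrac{d}{dt}\big|_{t=0} (g(t) k(t))$.  Expanding by the Leibniz rule splits this into a $g(t)$-contribution, which becomes $\Ad_{h^{-1}} Y_\mu(g)$ after passing to left trivialization at $gh$, plus a $k(t)$-contribution; applying the same Lagrangian splitting to the second factorization identifies the latter with the pullback vector field at $h$ for the coadjoint-transported parameter $\Ad_{h^{-1}}(\mu)$.

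The main obstacle will be carrying out this twisted multiplicativity computation in the infinite-dimensional setting.  The bookkeeping of successive Lagrangian splittings and the identification of the resulting $\fg^\vee$-element with the coadjoint transport through the nondegenerate form on $\dd$ is delicate, and one must ensure that the iterated factorizations all remain within the open subset of $\Dd$ governed by \cref{prop:open}.  Beyond these technical points, the argument adapts the finite-dimensional treatment in \cite{Lu1990}.
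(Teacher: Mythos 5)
Your outline follows the paper's proof almost exactly: both arguments reduce to the uniqueness clause of \cref{prop:dressing}, obtain the linearization at the identity from \cref{eqn:double}, and get twisted multiplicativity by differentiating the group-level identity $(gh)^d = g^d\cdot h^{(d^g)}$ coming from iterated factorization in $\Dd$; your explicit description of the pullback field as $\mathrm{pr}_{\fg}(\Ad_{g^{-1}}\mu)$ is a nice addition but not a different route. Two points need repair. First, in your multiplicativity step the parameter attached to the $k(t)$-contribution is $\dot\gamma(0)=\mathrm{pr}_{\fg^{\vee}}(\Ad_{g^{-1}}\mu)$, i.e.\ the transport of $\mu$ by $g$, not ``$\Ad_{h^{-1}}(\mu)$'' as you wrote; the identity you actually derive has the untransported term on the $g$-slot and the $g$-transported parameter on the $h$-slot, whereas the formula in \cref{prop:dressing} (to which the uniqueness statement is keyed) has the untransported term on the $h$-slot and an $h^{-1}$-transported parameter on the $g$-slot. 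These are both valid identities for the true dressing fields, but they are not literally the same statement, so you must either reconcile them or rerun the uniqueness argument for the form you derive. Second, the map $\Gg\to\Dd/\Gg^{\vee}$ is not an open embedding: its fibers are orbits of the finite group $\Ga=\Gg\cap\Gg^{\vee}$, so the factorizations $g^d$, $d^g$ are only defined up to $\Ga$ and the identity $(gh)^d=g^d\cdot h^{(d^g)}$ holds only modulo $\Ga$. The paper disposes of this by noting that $\Ga$ is finite, so the identity holds strictly on an analytic neighborhood of $e\in\Gg^{\vee}$, which suffices for differentiation; your write-up should include this step (the \'etale property is still enough to pull back vector fields, which is all that is ultimately needed).
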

\begin{proof}
We apply the uniqueness statement of \cref{prop:dressing}.  That these vector fields linearize to the coadjoint action at the identity follows from \cref{eqn:double}.  Twisted multiplicativity follows from differentiating the following version at the group level.  

Consider the open set $\Dd_0 = \{(g,g')\: |\: g^{-1}g', g(g')^{-1} \in \Gg_0 \}$.  By \cref{prop:open}, any element of $\Dd_0$ can be written as $d \cdot g$ for some $(d,g) \in \Gg^{\vee} \times \Gg$.  We can also factor it as $g^d \cdot d^g$ for some $(g^d,d^g) \in \Gg \times \Gg^{\vee}$, where $g^d$ and $d^g$ are uniquely defined up to right and left multiplication by $\Ga$, respectively.  In particular, the (local) left action of $\Gg^{\vee}$ on the image of $\Gg$ in $\Dd/\Gg^{\vee}$ can be written $\ell_d: g\Gg^{\vee} \mapsto g^d\Gg^{\vee}$.  But now by considering an element of the form $ghd$, where $g,h \in \Gg$, we obtain the identity $(g \cdot h)^d = g^d \cdot h^{(d^{g})}$.  This equality must be taken modulo the action of $\Ga$.  However, since $\Ga$ is finite it is strictly true in a neighborhood of $e \in \Gg^{\vee}$ in the analytic topology, and this is sufficient to obtain the corresponding statement about the infinitesimal action of $\fg^{\vee}$ as in \cite{Lu1990}.
\end{proof}

\begin{proof}[Proof of \cref{thm:leaves}]
The orbits of the action of $\Bb_{\pm}$ on $\Gg/\Bb_{\pm}$ are Schubert cells, which in particular are smooth finite-dimensional subvarieties.  It follows straightforwardly that the orbits of the action of $\Gg^{\vee}$ on $\Dd/\Gg^{\vee}$ are also smooth finite-dimensional subvarieties, and since $\Gg \to \Dd/\Gg^{\vee}$ is \'{e}tale the same is true of the preimages of these orbits in $\Gg$. 

 By \cref{prop:action}, the tangent space to such a preimage at any $g \in \Gg$ is exactly the span of the dressing vector fields at that point.  Note that the span of the $X_{\mu}|_g$ in $T_g \Gg$ for $\mu \in \fg^{\vee}$ is the same as the span of the $X_{\mu}|_g$ with $\mu$ arbitrary, since this subspace is finite-dimensional and $\fg^{\vee}$ is dense in $\fg^*$.  Thus the connected components of the preimages of the $\Gg^{\vee}$-orbits in $\Dd/\Gg^{\vee}$ are symplectic leaves of $\Gg$.  But these are exactly the intersections of $\Gg$ with the double cosets of $\Gg^{\vee}$ in $\Dd$.
\end{proof}

The intersections of $\Gg$ with the double cosets of $\Gg^{\vee}$ are characterized by the following theorem.  This was proved in the finite-dimensional case in \cite{Kogan2002} and \cite{Hoffmann2000}, and with \cref{thm:leaves}; the proofs given there apply verbatim in the general case.  

\begin{thm}
Given $u,v \in W$, let $H^{u,v} \subset H$ be the subgroup of elements of the form $(\dot{u}^{-1}h^{-1}\dot{u})(\dot{v}^{-1}h\dot{v})$, and let $S^{u,v} = \{g \in G^{u,v} \: | \: [\dot{u}^{-1}]_0\dot{v}^{-1}[g\dot{v}^{-1}]_0\dot{v} \in H^{u,v} \}$.  Then the intersections of $\Gg^{u,v}$ with the double cosets of $\Gg^{\vee}$ in $\Dd$ are the subsets $S^{u,v}\cdot h$ for $h \in H$.  
In particular, the symplectic leaves of a fixed double Bruhat cell are isomorphic with one another.
\end{thm}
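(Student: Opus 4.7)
The plan is to verify that the Kogan--Zelevinsky argument \cite{Kogan2002}, reprised in \cite{Hoffmann2000}, transfers verbatim once combined with \cref{thm:leaves}. The essential content is the following: the assignment $\phi_{u,v}\colon g \mapsto [\dot{u}^{-1}]_0 \dot{v}^{-1}[g\dot{v}^{-1}]_0\dot{v}$ defines a regular map $\Gg^{u,v} \to H$ whose induced map $\Gg^{u,v} \to H/H^{u,v}$ is constant on $\Gg^{\vee}$-double cosets in $\Dd$ and whose fibers are exactly the intersections of these double cosets with $\Gg^{u,v}$. Granting this, the level sets of $\phi_{u,v}$ are precisely the translates $S^{u,v}\cdot h$, and the ``in particular'' clause follows immediately: right multiplication by any $h \in H$ is a biregular isomorphism $\Gg^{u,v}\to\Gg^{u,v}$ sending $S^{u,v}$ onto $S^{u,v}\cdot h$, and it is Poisson because $H$ carries the trivial Poisson structure by \cref{prop:poissonsubgroups}, hence it restricts to a symplectomorphism between the two leaves.

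First I would check the regularity of $\phi_{u,v}$ by using \cref{prop:refinedBruhat} to write any $g \in \Gg^{u,v}$ as $g = n_-\dot{v}h_v u_-$ with $n_- \in \Uu_-(v)$, $h_v \in H$, and $u_- \in \Uu_-$. Conjugation of $u_-$ by $\dot{v}$ factors as an element of $\Uu_-(v)$ times an element of $\Uu_+$ via \cref{prop:unipotentfactorization}, so $g\dot{v}^{-1}$ lies in the Gaussian open cell $\Gg_0$ of \cref{prop:gaussian}, and the Gaussian $H$-component $[g\dot{v}^{-1}]_0$ depends regularly on $h_v$ and on the unipotent factorization data. The prefactor involving $\dot{u}^{-1}$ is interpreted by the same mechanism, and the outer conjugation by $\dot{v}$ lands the product in $H$ because Weyl representatives normalize $H$.

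Second I would compute the cocycle by which $\phi_{u,v}$ transforms when $g$ is replaced by $g' = b_-^1 g b_-^2 = b_+^1 g b_+^2$ with $(b_-^i, b_+^i) \in \Gg^{\vee}$. Using the defining constraint $[b_-^i]_0[b_+^i]_0 = e$ and applying Gaussian factorization to each factor, a direct calculation yields
\[
\phi_{u,v}(g') = \phi_{u,v}(g) \cdot (\dot{u}^{-1}\eta^{-1}\dot{u})(\dot{v}^{-1}\eta\dot{v})
\]
for a suitable $\eta \in H$ determined by the $[b_\pm^i]_0$, so $\phi_{u,v}$ descends to a map on $\Gg^{\vee}$-cosets valued in $H/H^{u,v}$. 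For the converse, any two elements of $\Gg^{u,v}$ with the same image in $H/H^{u,v}$ can be related by an explicit pair $(b_-, b_+) \in \Gg^{\vee}$ whose Borel components are constructed recursively from their factorization data together with $\eta$; this is the finite-dimensional factorization step from \cite{Kogan2002}, which applies without change.

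The principal obstacle is bookkeeping rather than substance: one must verify that Gaussian factorization, the behaviour of Cartan components under left and right Borel multiplications, and the explicit construction of the conjugating element all remain valid in the ind-group setting. But every required structural statement (\cref{prop:gaussian,prop:refinedBruhat,prop:unipotentfactorization,prop:open}) has already been established, and on a fixed (finite-dimensional) double Bruhat cell the relevant computations take place entirely in a finite-dimensional subquotient of $\Gg$. Accordingly, the arguments of \cite{Kogan2002,Hoffmann2000} transfer without modification, justifying the author's ``verbatim'' remark.
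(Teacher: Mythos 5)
Your proposal is correct and takes essentially the same route as the paper, whose entire proof of this theorem is the assertion that the finite-dimensional arguments of \cite{Kogan2002} and \cite{Hoffmann2000}, combined with \cref{thm:leaves}, apply verbatim; you have simply unpacked what those arguments are and checked that the needed structural inputs (Gaussian factorization, the refined Bruhat decomposition, and \cref{prop:open}) are already available in the ind-group setting. One small notational point, inherited from the paper's own statement rather than introduced by you: the expression $[\dot{u}^{-1}]_0$ must be read as $[\dot{u}^{-1}g]_0$ for the defining condition of $S^{u,v}$ to make sense.
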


\section{Double Bruhat Cells in Kac-Moody Groups} \label{sec:dbc}

In this section we establish the main properties of double Bruhat cells in Kac-Moody groups that will be needed in the construction of integrable systems in \cref{sec:acts}.  In particular, we generalize the factorization coordinates of \cite{Fomin1999} to the Kac-Moody setting, and describe the standard Poisson bracket in these coordinates.  We perform these computations explicitly for Coxeter double Bruhat cells in affine type, which is the relevant case for the integrable systems we consider.  
\subsection{Factorization in Double Bruhat Cells} \label{sec:dbcbasics}

Let $\Gg$ be a symmetrizable Kac-Moody group and $\Gg^{u,v}$ a fixed double Bruhat cell.  Following \cite{Fomin1999} in the case where $\Gg$ is a semisimple Lie group, we now prove that $\Gg^{u,v}$ is a rational variety, calculate its dimension, and show that on certain dense open sets it may be factored as a product of 1-parameter subgroups.  

\begin{prop}\label{prop:fdimdbc}
The image of the diagonal map
\[
\Gg^{u,v} \to \Bb_+ \dot{u} \Bb_+ / \Bb_+ \times \Bb_- \dot{v} \Bb_- / \Bb_- \cong \Uu_+(u) \times \Uu_-(v)
\]
is the open subset $U = \{(y_+,y_-) \in \Uu_+(u) \times \Uu_-(v)\: |\: \dot{v}^{-1} y^{-1}_- y_+ \dot{u} \in \Gg_0\}$.  There is an isomorphism $\Th: U \times H \to \Gg^{u,v}$ given by 
\[
\Th(y_+,y_-,h) = y_+\dot{u}[\dot{v}^{-1}y_-^{-1}y_+\dot{u}]_+^{-1}h, \quad \Th^{-1}(g) = ([\dot{u}^{-1}g]_-,[g^{-1}\dot{v}]_+^{-1},[\dot{u}^{-1}g]_0).
\]
In particular, $\Gg^{u,v}$ is a finite-dimensional variety isomorphic with a Zariski open subset of $\CC^{m+\mathrm{dim}(H)}$, where $m = \ell(u)+\ell(v)$.
\end{prop}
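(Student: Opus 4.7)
The plan is to identify the image of the diagonal map as $U$, write down the isomorphism $\Th$ and its inverse in terms of the Gaussian decomposition, and then count dimensions.

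By \cref{prop:refinedBruhat}, each $g \in \Gg^{u,v}$ admits unique factorizations $g = y_+ \dot{u} b_+$ with $y_+ \in \Uu_+(u)$, $b_+ \in \Bb_+$, and $g = y_- \dot{v} b_-$ with $y_- \in \Uu_-(v)$, $b_- \in \Bb_-$; this defines the diagonal map. A pair $(y_+, y_-) \in \Uu_+(u) \times \Uu_-(v)$ lies in the image iff $y_+ \dot{u} b_+ = y_- \dot{v} b_-$ for some $b_\pm \in \Bb_\pm$, equivalently iff $\dot{v}^{-1} y_-^{-1} y_+ \dot{u} = b_- b_+^{-1}$. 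Since $\Bb_\pm = H \Uu_\pm$ and $H$ normalizes $\Uu_\pm$, such products $b_- b_+^{-1}$ exhaust $\Uu_- H \Uu_+ = \Gg_0$; by \cref{prop:gaussian} the image is therefore exactly $U$. Its openness in $\Uu_+(u) \times \Uu_-(v)$ follows because $\Gg_0 \subset \Gg$ is open and the map $(y_+, y_-) \mapsto \dot{v}^{-1} y_-^{-1} y_+ \dot{u}$ is regular.

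Next I would verify the stated formula for $\Th$. Given $(y_+, y_-, h) \in U \times H$, set $x := \dot{v}^{-1} y_-^{-1} y_+ \dot{u} \in \Gg_0$ and invoke its Gaussian decomposition $x = [x]_- [x]_0 [x]_+$. By definition $\Th(y_+, y_-, h) = y_+ \dot{u} [x]_+^{-1} h \in \Uu_+(u) \dot{u} \Bb_+ \subset \Bb_+ \dot{u} \Bb_+$. Substituting $y_+ \dot{u} = y_- \dot{v} x$ and using $x [x]_+^{-1} = [x]_- [x]_0 \in \Bb_-$ gives the alternative expression $\Th(y_+, y_-, h) = y_- \dot{v} [x]_- [x]_0 h \in \Uu_-(v) \dot{v} \Bb_- \subset \Bb_- \dot{v} \Bb_-$, so $\Th$ lands in $\Gg^{u,v}$. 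Regularity is inherited from that of $[\cdot]_\pm$ on $\Gg_0$ (\cref{prop:gaussian}). The inverse is built from applying Gaussian decomposition to appropriate $\dot{u}^{\pm 1}, \dot{v}^{\pm 1}$-translates of $g$, yielding the stated formulas; that these maps invert $\Th$ is then checked using the uniqueness parts of the Bruhat and Gaussian factorizations.

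Finally, since $\Uu_\pm(\cdot)$ are unipotent of dimensions $\ell(u)$ and $\ell(v)$ and hence isomorphic to $\CC^{\ell(u)}$ and $\CC^{\ell(v)}$, and since $H \cong (\CC^*)^{\dim H}$ is open in $\CC^{\dim H}$, $U \times H$ is a Zariski open subset of $\CC^{m + \dim H}$ with $m = \ell(u) + \ell(v)$, as claimed. The main obstacle I anticipate is the routine but delicate verification that $\Th$ and $\Th^{-1}$ are mutually inverse regular maps: this requires simultaneously tracking the Gaussian decompositions of several related elements (namely $x$, $\dot{u}^{-1} g$, and $g^{-1} \dot{v}$) and reconciling them with the two Bruhat factorizations of $g$ uniformly in $g \in \Gg^{u,v}$.
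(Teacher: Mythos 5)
Your argument follows essentially the same route as the paper's: the paper identifies the image by combining \cref{prop:refinedBruhat} with \cref{prop:open} (whose content you re-derive directly via $\Bb_-\Bb_+^{-1}=\Uu_-H\,\Uu_+=\Gg_0$), and, like you, it declares the verification that $\Th$ and $\Th^{-1}$ are mutually inverse to be an elementary calculation and omits it. Your additional check that $\Th$ lands in $\Uu_+(u)\dot{u}\Bb_+\cap\Uu_-(v)\dot{v}\Bb_-$ and the concluding dimension count are fine and consistent with the paper's intent.
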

\begin{proof}
By \cref{prop:open} the image of $\Gg$ in $\Gg/\Bb_- \times \Gg/\Bb_+$ is $\{(g\Bb_-,g'\Bb_+)\:|\: g^{-1}g' \in \Gg_0 \}$.  The first claim follows from restricting this statement to $\Gg^{u,v}$ and using \cref{prop:refinedBruhat} to write an element of $\Gg^{u,v}$ as $y_+\dot{u}b_+ = y_- \dot{v}b_-$ for some $y_+ \in \Uu_+(u), y_- \in \Uu_-(v)$, and $b_{\pm} \in \Bb_{\pm}$.  

Verifying that the stated maps are inverse to each other is an elementary calculation, which we omit.
\end{proof}

\begin{defn}
(\cite{Fomin1999}) A \emph{double reduced word} $\mb{i}=(i_1, \dots, i_m)$ for $(u,v)$ is a shuffle of a reduced word for $u$ written in the alphabet $\{-1,\dots,-r\}$ and a reduced word for $v$ written in the alphabet $\{1,\dots,r\}$.  For each $\mb{i}$ we have a map
\[
x_{\mb{i}}: H \times (\CC^*)^{\ell(u) + \ell(v)} \to \Gg, \quad (a, t_i, \dots , t_m) \mapsto ax_{i_1}(t_1) \cdots x_{i_m}(t_m).
\]
Here $x_i(t)$ and $x_{-i}(t)$ denote the 1-parameter subgroups corresponding to $\al_i$ and $-\al_i$, respectively.  The following proposition demonstrates that $x_{\mb{i}}$ yields explicit coordinates on a dense subset of $\Gg^{u,v}$; we refer to these as factorization coordinates.
\end{defn}

\begin{prop}\label{prop:image}
The map $x_{\mb{i}}$ is injective and its image $\Gg_{\mb{i}}$ is a dense subset of $\Gg^{u,v}$.
\end{prop}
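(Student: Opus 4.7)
The plan is three-fold: image in $\Gg^{u,v}$ via Bruhat-cell bookkeeping, injectivity by induction on $m=\ell(u)+\ell(v)$, and density by dimension comparison. For the first, the key input is the $SL_2$-level observation (inside each simple root subgroup $G_{\alpha_i}$) that $x_i(t)\in\Uu_+$ always and $x_i(t)\in\Bb_-\dot{s}_i\Bb_-$ exactly when $t\ne 0$, with the dual statement for $x_{-i}(t)$. Combined with the Bruhat sub-multiplicativity
\[
\Bb_\pm\dot{w}_1\Bb_\pm\cdot\Bb_\pm\dot{w}_2\Bb_\pm\subseteq\Bb_\pm\dot{w_1w_2}\Bb_\pm\quad\text{when}\quad\ell(w_1w_2)=\ell(w_1)+\ell(w_2),
\]
and the defining assumption that the positive- and negative-indexed subsequences of $\mb{i}$ are reduced expressions for $v$ and $u$ respectively, one obtains $x_\mb{i}(a,t_1,\ldots,t_m)\in\Bb_+\dot{u}\Bb_+\cap\Bb_-\dot{v}\Bb_-=\Gg^{u,v}$.

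\textbf{Injectivity by induction on $m$.} The base case $m=0$ is trivial. For $m\ge 1$, by a Chevalley-type involution swapping $\Bb_\pm$, I may assume $i_m>0$, so that $v=v''s_{i_m}$ with $\ell(v)=\ell(v'')+1$ and $\mb{i}':=(i_1,\ldots,i_{m-1})$ is a double reduced word for $(u,v'')$. Writing $g:=x_\mb{i}(a,t_1,\ldots,t_m)=x_{\mb{i}'}(a,t_1,\ldots,t_{m-1})\cdot x_{i_m}(t_m)$, it suffices to extract $t_m$ regularly from $g$, since then $g\cdot x_{i_m}(-t_m)\in\Gg^{u,v''}$ and the inductive hypothesis recovers the remaining data. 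To extract $t_m$, use the $SL_2$-level identity $x_{i_m}(t)=x_{-i_m}(1/t)\dot{s}_{i_m}b_-(t)$ (valid for $t\ne 0$, with explicit $b_-(t)\in\Bb_-$) together with the length-additive decomposition of $\Uu_-(v)$ as $\Uu_-(v'')$ times the 1-parameter root subgroup for $-v''(\alpha_{i_m})$, coming from $\ell(v)=\ell(v'')+1$: under the identification $\Bb_-\dot{v}\Bb_-/\Bb_-\cong\Uu_-(v)$ of \cref{prop:refinedBruhat}, the coordinate of $g$ along this extra root direction will turn out to be a nonvanishing regular function of $t_m$ alone.

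\textbf{Density and main obstacle.} By \cref{prop:fdimdbc}, $\Gg^{u,v}$ is irreducible of dimension $\dim H+m$, matching the source $H\times(\CC^*)^m$. A regular injection between irreducible varieties of equal dimension is automatically dominant, so density follows. The real work is in the inductive step: verifying that the ``extra root'' coordinate of $y_-(g)\in\Uu_-(v)$ depends only on $t_m$, and that it is a regular function on $\Gg^{u,v}$. This amounts to threading a Gaussian-type rewriting of $g$ through conjugation by $\dot{v''}$, invoking \cref{prop:gaussian,prop:unipotentfactorization} at each stage; each ingredient is standard, but the bookkeeping is where the ind-variety setting demands care.
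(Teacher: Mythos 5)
Your first and third steps (image containment via Bruhat multiplicativity, and density via the dimension count of \cref{prop:fdimdbc} plus injectivity) are exactly the paper's argument. The gap is in the injectivity step, and it is a genuine one: the fact you defer as "the real work" — that the coordinate of $y_-(g)\in\Uu_-(v)$ along the extra root direction is a nonvanishing regular function of $t_m$ \emph{alone} — is false. Already in the smallest case $g=a\,x_i(t)$ with $a\in H$, one computes from $x_i(t)=x_{-i}(t^{-1})\,\al_i^\vee(t)\,\dot{s}_i\,x_{-i}(t^{-1})$ that $g=x_{-i}\bigl(\al_i(a)^{-1}t^{-1}\bigr)\cdot\dot{s}_i b_-$ with $b_-\in\Bb_-$, so the $\Uu_-(s_i)$-coordinate is $\al_i(a)^{-1}/t$, which depends on $a$. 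In general the $\Bb_-$-tail of the prefix $x_{\mb{i}'}(a,t_1,\dots,t_{m-1})$ rescales $t_m$ by a character of its torus part before $t_m$ reaches the extra root direction, so the coordinate you want to read off is entangled with the data you have not yet recovered, and the induction does not close. (This is essentially why Fomin and Zelevinsky need generalized minors and twist maps to invert $x_{\mb{i}}$; a naive right-to-left peeling is not enough.)

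The paper proves injectivity without extracting any coordinates. Suppose $x_{\mb{i}}(a,t_1,\dots,t_m)=x_{\mb{i}}(a',t'_1,\dots,t'_m)$ with the tuples distinct, and let $k$ be the largest index with $t_k\neq t'_k$ (if all $t_j=t'_j$, cancel everything to get $a=a'$). Cancelling the common factors to the right of position $k$ and then multiplying both sides by $x_{i_k}(-t'_k)$ yields
\[
x_{\mb{i}'}(a,t_1,\dots,t_k-t'_k)=x_{\mb{i}''}(a',t'_1,\dots,t'_{k-1}),
\]
where $\mb{i}'=(i_1,\dots,i_k)$ and $\mb{i}''=(i_1,\dots,i_{k-1})$. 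Since $t_k-t'_k\neq 0$, your containment step (applied to the truncated words) places the two sides in \emph{different} double Bruhat cells, a contradiction. I recommend replacing your inductive extraction with this cancellation argument; it uses only what you have already proved in your first step.
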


\begin{proof}
First we show that the image of $x_{\mb{i}}$ is contained in $G^{u,v}$.  For each $1 \leq i \leq r$, we have $x_i(t) \in \Bb_+$ and $x_{-i}(t) \in \Bb_+ s_i \Bb_+$.  Thus if ${k_1 < \dots < k_{\ell(u)}} \subset \{ 1,\dots,m \}$ are the indices of the negative entries in $\mb{i}$, 
\[
x_{\mb{i}}(a, t_i, \dots , t_m) \in \Bb_+ \cdots \Bb_+ s_{|i_{k_1}|} \Bb_+ \cdots \Bb_+ s_{|i_{k_{\ell(u)}}|} \Bb_+ \cdots \Bb_+.
\]
Recall that for $w, w' \in W$, 
\[
\Bb_+ w \Bb_+ \cdot \Bb_+ w' \Bb_+ = \Bb_+ w w' \Bb_+,
\]
whenever $\ell(ww') = \ell(w) + \ell(w')$ \cite[5.1.3]{Kumar2002}.  Thus in particular $x_{\mb{i}}(a, t_i, \dots , t_m) \in \Bb_+ u \Bb_+$, and by the same argument $x_{\mb{i}}(a, t_i, \dots , t_m) \in \Bb_- v \Bb_-$.

Suppose that
\[
x_{\mb{i}}(a, t_1, \dots , t_m) = x_{\mb{i}}(a', t'_1, \dots , t'_m)
\] but $(a, t_1, \dots , t_m) \neq (a', t'_1, \dots , t'_m)$, and let $k$ be the largest index such that $t_k \neq t'_k$.  Note that $\mb{i}' = (i_1, \dots ,i_k)$ is a double reduced word for some $(u',v')$, and that $x_{\mb{i}'}(a, t_i, \dots , t_k) = x_{\mb{i}'}(a', t'_i, \dots , t'_k)$. 

Multiplying both sides on the right by $x_{i_k}(-t'_k)$, we obtain
\[
x_{\mb{i}'}(a, t_i, \dots , t_k - t'_k) = x_{\mb{i}''}(a', t'_i, \dots , t'_{k-1}),
\]
where $\mb{i}''=(i_1, \dots, i_{k-1})$.  But by the first part of the proposition the left and right sides are in different double Bruhat cells, hence by contradiction $x_{\mb{i}}$ must be injective.  

Furthermore, since we know $\Gg^{u,v}$ is an open subvariety of $\CC^{m + r}$ and $x_{\mb{i}}$ is a regular map from $(\CC^*)^{m + r}$, the fact that the image of $x_{\mb{i}}$ is dense follows from its injectivity.
\end{proof}

\begin{remark}
The analogous statements for the derived subgroup $\Gg'$ follow along the same lines, with $H$ replaced by $H' = H \cap \Gg'$.  
\end{remark}

\subsection{Poisson Brackets on Double Bruhat Cells}

Recall from \cref{sec:symp} that the double Bruhat cell $\Gg^{u,v}$ is a Poisson subvariety of $\Gg$.  By modifying the map $x_{\mb{i}}$ of the previous section, we now realize the symplectic leaves of $\Gg^{u,v}$ (more precisely, their intersections with $\Gg_i$) as reductions of a Hamiltonian torus action.  In particular, we obtain modified factorization coordinates along with explicit formulas for their Poisson brackets.

First observe that $SL_2^{(d)}$ has two distinguished symplectic leaves
\begin{equation*}
S^d_+ = \left\{ \begin{pmatrix} A & B \\ 0 & A^{-1} \end{pmatrix}: A,B \neq 0 \right\}, \quad
S^d_- = \left\{ \begin{pmatrix} D^{-1} & 0 \\ C & D \end{pmatrix}: C,D \neq 0 \right\}.
\end{equation*}
The Poisson brackets on $S^d_+$ and $S^d_-$ are given by $\{B,A\} = dAB$ and $\{D,C\} = dCD$, respectively.  Now define a symplectic variety
\[
S_{\mb{i}} := S_{\ep(i_1)}^{|d_{i_1}|} \times \cdots \times S_{\ep(i_m)}^{|d_{i_m}|},
\]
where $\ep(i_j)$ is the sign of $i_j$.

If $H_k$ is the Cartan subgroup of $G_{\al_k}$, we also define two tori
\[
H_{\mb{i}} := (H/H') \times \prod_{n_{\mb{i}}(k) = 0} H_k, \quad \wh{H}_{\mb{i}} := \prod_{n_{\mb{i}}(k) \neq 0} H_k^{n_{\mb{i}}(k) - 1}.
\]
Here $n_{\mb{i}}(k)$ is the total number of times the simple reflection $s_k$ appears in our reduced expressions for $u$ and $v$, that is,
\[
n_{\mb{i}}(k) = \#\{j : |i_j| = k, 1 \leq j \leq m\}.
\]
As before, $H' = H \cap \Gg'$ is the subgroup of $H$ generated by the coroots.  

\begin{defn}\label{def:modfact}
 Let $\phi_{\mb{i}}$ be the map given by
\[
\phi_{\mb{i}}: H_{\mb{i}} \times S_{\mb{i}} \to \Gg^{u,v}, \quad (a, g_{i_1},\dots ,g_{i_m}) \mapsto a \cdot \phi_{i_1}(g_{i_1}) \cdots \phi_{i_m}(g_{i_m}).
\]
We can define a similar map for the derived subgroup $\Gg'$ by omitting the $H/H'$ factor in the definition of $H_{\mb{i}}$.
\end{defn}

\begin{prop}\label{prop:pfact}
The map $\phi_{\mb{i}}$ is Poisson, with $H_{\mb{i}}$ being given the trivial Poisson structure.  Its image is $\Gg_{\mb{i}}$ and its fibers are the orbits of a simply transitive action of $\wh{H}_{\mb{i}}$. 
\end{prop}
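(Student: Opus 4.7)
The plan is to verify three separate claims: $\phi_{\mb{i}}$ is Poisson, its image coincides with $\Gg_{\mb{i}}$, and its fibers are simply transitive $\wh{H}_{\mb{i}}$-orbits. The Poisson property is essentially formal from what has been set up: by \cref{prop:poissonsubgroups} each $G_{\al_k} \cong SL_2^{(d_k)}$ is a Poisson subgroup of $\Gg$ and $H$ is a Poisson subgroup with trivial bracket, so each symplectic leaf $S^{|d_{i_j}|}_{\ep(i_j)} \hookrightarrow \Gg$ is a Poisson embedding and the inclusion $H_{\mb{i}} \hookrightarrow H \hookrightarrow \Gg$ is compatible with the trivial bracket on $H_{\mb{i}}$. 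Since $\Gg$ is Poisson-Lie, iterated multiplication $\Gg^{m+1} \to \Gg$ is a Poisson map (combining the Poisson-Lie condition with \cref{prop:prods}), so $\phi_{\mb{i}}$ is Poisson as a composition of Poisson morphisms.

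For the image and fiber structure, I would begin by factoring each $g_j \in S^{|d_{i_j}|}_{\ep(i_j)}$ in the form $g_j = x_{i_j}(\tau_j)\cdot h_{|i_j|}(A_j)$, then commute every Cartan factor to the left through the resulting product using the identity $h \cdot x_i(t) = x_i(\al_i(h)\, t) \cdot h$. This rewrites $\phi_{\mb{i}}(a, g_1, \ldots, g_m)$ as $x_{\mb{i}}(a', t'_1, \ldots, t'_m)$, where $a' = a \cdot \prod_j h_{|i_j|}(A_j) \in H$ and each $t'_j$ is an explicit Laurent monomial in $\tau_j$ and the $A_l$'s. Because $a$ fills out the $H/H'$-coset and the $A_j$'s together with the extra $H_k$'s for $n_{\mb{i}}(k)=0$ fill out $H' = \prod_k H_k$, the assignment $(a, (A_j)) \mapsto a'$ is surjective onto $H$; and since each $t'_j$ is invertible and may be prescribed independently (by solving for $\tau_j$), the image of $\phi_{\mb{i}}$ coincides with $\Gg_{\mb{i}}$.

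For the torus action, observe that $\wh{H}_{\mb{i}}$ is naturally the kernel of the product map $\prod_{j:\,n_{\mb{i}}(|i_j|)\ne 0} H_{|i_j|} \twoheadrightarrow \prod_{k:\,n_{\mb{i}}(k)\ne 0} H_k$. For each $k$ and each pair of consecutive occurrences $j_p < j_{p+1}$ of the index $k$ in $\mb{i}$, I would define a one-parameter $H_k$-action on $H_{\mb{i}} \times S_{\mb{i}}$ that right-multiplies $g_{j_p}$ by $h_k(s)$, left-multiplies $g_{j_{p+1}}$ by $h_k(s)^{-1}$, and conjugates each intermediate $g_j$ (for $j_p<j<j_{p+1}$) by $h_k(s)^{-1}$. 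Each $S^d_\pm$ is stable under left, right, and conjugation actions of its Cartan, so this is well-defined; the telescoping cancellation of the inserted $h_k(s^{\pm 1})$'s in the overall product leaves $\phi_{\mb{i}}$ invariant. Combining these over all $(k, p)$ yields the sought $\wh{H}_{\mb{i}}$-action. Given the explicit parametrization of the previous step, fixing the image of $\phi_{\mb{i}}$ determines $a'$, hence each product $\prod_{p} A_{j_p}$, together with each monomial $t'_j$; the remaining freedom in the individual $A_{j_p}$'s is then precisely an $\wh{H}_{\mb{i}}$-torsor, and the $\tau_j$'s are forced from it. Hence the $\wh{H}_{\mb{i}}$-action is free and its orbits are the fibers of $\phi_{\mb{i}}$.

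The main obstacle is the careful bookkeeping in the factorization step: commuting Cartan factors through a long product of $x_{i_l}$'s produces monomial exponents involving the entries of the generalized Cartan matrix, and one must track these precisely enough to recover both the image as $\Gg_{\mb{i}}$ and to identify the fiber torus with $\wh{H}_{\mb{i}}$. Once these explicit formulas are in hand, the Poisson property and the overall shape of the torus action are formal consequences.
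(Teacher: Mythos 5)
Your proposal is correct and follows essentially the same route as the paper: the Poisson property via \cref{prop:poissonsubgroups} and multiplicativity, the image via direct comparison of $\phi_{\mb{i}}$ with $x_{\mb{i}}$, and the $\wh{H}_{\mb{i}}$-action defined by right/left multiplication on consecutive occurrences of each index with conjugation in between. You simply make explicit (via the factorization $g_j = x_{i_j}(\tau_j)h_{|i_j|}(A_j)$ and the injectivity of $x_{\mb{i}}$ from \cref{prop:image}) the bookkeeping that the paper leaves as a ``straightforward comparison,'' including the simple transitivity on fibers.
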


\begin{proof}
The first assertion follows from \cref{prop:poissonsubgroups}.  That the image of $\phi_{\mb{i}}$ is $\Gg_{\mb{i}}$ follows from a straightforward comparison of the definitions of $\phi_{\mb{i}}$ and $x_{\mb{i}}$.  We describe the action of $\wh{H}_{\mb{i}}$ by considering each of the $H_k^{n_{\mb{i}}(k) - 1}$ factors individually.  For each $k$ let $j_1 < \dots < j_{n_{\mb{i}}(k)}$ be the indices such that $|i_{j_n}| = k$.  Then for any element $t_n^{h_k}$ of the $n$th $H_k$ factor, where $1 \leq n \leq n(k) - 1$, let
\begin{align*}
t_n^{h_k} \cdot (a, g_{i_1},\dots ,g_{i_m}) & = (a, g_{i_1}, \dots, g_{i_{j_n}} \cdot t_n^{h_k}, \dots, t_n^{-h_k} \cdot g_{i_{\ell}} \cdot t_n^{h_k}, \dots\\
& \quad \dots, t_n^{-h_k} \cdot g_{i_{j_{n+1}}}, \dots, g_{i_m}).
\end{align*}
Here $t_n^{h_k} \cdot g_{i_{\ell}} \cdot t_n^{-h_k}$ refers to the conjugation action of $\phi_k(H_k)$ on $\phi_{i_{\ell}}(S_{\pm})$.  
\end{proof}

In particular, $\phi_{\mb{i}}$ induces an isomorphism between the invariant ring $\CC[H_{\mb{i}} \times S_{\mb{i}}]^{\wh{H}_{\mb{i}}}$ and the coordinate ring $\CC[\Gg_{\mb{i}}]$.  Since we know the Poisson brackets of the coordinate functions on $H_{\mb{i}} \times S_{\mb{i}}$, we obtain an explicit description of the Poisson structure of $\Gg_{\mb{i}}$.

\subsection{Affine Coxeter Double Bruhat Cells}\label{sec:brackets}

We now specialize the preceding discussion to the affine case $\Gg' \cong \wt{LG}$, and explicitly calculate the factorization coordinates and their Poisson brackets for a distinguished class of double Bruhat cells.  We will also consider the quotient of $\wt{LG}^{u,v}$ by the conjugation action of $H$.

\begin{defn}
If $u$ and $v$ are Coxeter elements of the affine Weyl group we say that $\wt{LG}^{u,v}$ is a \emph{Coxeter double Bruhat cell}.  Recall that $w \in W$ is a Coxeter element if in some (hence any) reduced expression for $w$ each simple reflection appears exactly once. 
\end{defn}

We may write any reduced word for $v$ as $s_{\si(0)} \dots s_{\si(r)}$ for some permutation $\si \in S_{r+1}$, and likewise any reduced word for $u$ as $s_{\tau(0)}\dots s_{\tau(r)}$ for some permutation $\tau$.  Given reduced words for $u$ and $v$, we will only explicitly write out the factorization coordinates for the unshuffled double reduced word $\mb{i}=(s_{\si(0)}\dots s_{\si(r)}s_{\tau(0)} \dots s_{\tau(r)})$.  This will simplify our notation but still let us perform the calculations needed in \cref{sec:acts}.

The map $\phi_{\mb{i}}$ of \cref{def:modfact} now takes the form
\begin{multline*}
\phi_{\mb{i}}: (g_{\si(0)},\dots ,g_{\si(r)},g_{\tau(0)}',\dots,g'_{\tau(r)}) \mapsto \\
\phi_{\si(0)}(g_{\si(0)}) \dots \phi_{\si(r)}(g_{\si(r)}) \phi_{\tau(0)}(g'_{\tau(0)}) \dots \phi_{\tau(r)}(g_{\tau(r)}'),
\end{multline*}
where
\[
(g_{\si(0)},\dots ,g_{\si(r)},g_{\tau(0)}',\dots,g'_{\tau(r)}) \in S_{\mb{i}} = S_+^{d_{\si(0)}} \times \cdots \times S_+^{d_{\si(r)}} \times S_-^{d_{\tau(0)}} \times \cdots \times S_-^{d_{\tau(r)}}.
\]
We will let $A_i, B_i$ and $C_i, D_i$ denote the standard coordinates on $S_+^{d_i}$ and $S_-^{d_i}$, respectively.

Since $u$ and $v$ are Coxeter elements, the torus $\wh{H}_{\mb{i}}$ is equal to $\prod_{k=0}^r H_k$, and its action on $S_{\mb{i}}$ is given by
\begin{align*}
t^{h_k} \cdot (g_{\si(0)}, \dots, g'_{\tau(r)}) & = (g_{\si(0)},\dots ,g_k\cdot t^{h_k}, \dots, t^{-h_k} \cdot g_{\si(r)} \cdot t^{h_k}, t^{-h_k} \cdot g'_{\tau(0)} \cdot t^{h_k}, \dots\\
& \quad \dots, t^{-h_k} \cdot g'_k, \dots, g'_{\tau(r)}).
\end{align*}
To write this in coordinates we introduce the notation $i <_\si k$ to mean $\si^{-1}(i) < \si^{-1}(k)$, or simply that $i$ appears to the left of $k$ in the reduced word for $v$; likewise we define $i <_\tau k$.  Then we have
\[ t^{h_k}: (A_i,B_i) \to 
\begin{cases} 
(A_i,B_i) & i <_\si k \\
(tA_i,t^{-1}B_i) & i = k \\
(A_i,t^{-C_{ki}}B_i) & i >_\si k 
\end{cases},\quad  (C_i,D_i) \to 
\begin{cases} 
(C_i,t^{C_{ki}}D_i) & i <_\tau k \\
(tC_i,tD_i) & i = k \\
(C_i,D_i) & i >_\tau k
\end{cases},
\]
where $C_{ki}$ is the corresponding entry in the Cartan matrix of $\wt{LG}$.  If we let
\[ 
T_i=A_iD_i^{-1}, \quad V_i=B_iD_i(\prod_{k <_\si i} D_k^{C_{ki}}), \quad W_i=(\prod_{k >_\tau i}A_k^{-C_{ki}})A_i^{-1}C_i, 
\]
then
\[ 
\CC[\widetilde{LG}_\mb{i}] \cong \CC[S_\mb{i}]^{\wh{H}_{\mb{i}}} \cong \CC[T^{\pm 1}_0,V^{\pm 1}_0,W^{\pm 1}_0,\dots ,T^{\pm 1}_r,V^{\pm 1}_r,W^{\pm 1}_r].
\]

In \cref{sec:acts} we will consider the quotient of $\wt{LG}^{u,v}$ by the adjoint action of $H$.  This is again a Poisson variety, since $H$ acts by Poisson automorphisms.   This is similar to the reduced double Bruhat cells considered in \cite{Zelevinsky2000,Yang2008}, though they consider the quotient by left multiplication rather than conjugation.  We now derive coordinates on $\wt{LG}^{u,v}/H$ along with their Poisson brackets. 

If $h^k \in \hh$ satisfies $\al_i(h^k)=\de_{i,k}$, then for $k \neq 0$ we have
\[ t^{h^k}: (T_i,V_i,W_i) \to 
\begin{cases} 
(T_i,t^{-\th_k}V_i,t^{\th_k}W_i) & i = 0 \\
(T_i,tV_i,t^{-1}W_i) & i = k \\
(T_i,V_i,W_i) & i \neq 0, k.
\end{cases}
\]
Now setting $S_i=V_iW_i$ and $Q=V_0(\prod_{i\neq0}V_i^{\th_i})$, a straightforward calculation yields
\begin{gather}\label{eq:coords}
\CC[\widetilde{LG}_\mb{i}/H] \cong \CC[T^{\pm 1}_0,S^{\pm 1}_0,\dots,T^{\pm 1}_r,S^{\pm 1}_r,Q^{\pm 1}].  
\end{gather}
The Poisson structure is determined by the pairwise brackets of these generators; the nonzero ones are exactly
\begin{gather}
\{S_i,T_k\}=2d_iS_iT_i\de_{i,k}, \quad \{Q, T_k\}= d_k\th_kQT_k, \notag\\
\{S_i,S_k\}=2d_kC_{ki}([i >_\si k >_\tau i] - [i >_\tau k >_\si i]) S_i S_k, \label{eq:explicitbrackets} \\
\quad \{Q,S_k\} = \biggl(\sum_{i \neq k} \th_id_kC_{ki}([i >_\si k >_\tau i] - [i >_\tau k >_\si i])\biggr)Q S_k. \notag
\end{gather}
Here $[i >_\si k >_\tau i]$ is equal to 1 if both $i >_\si k$ and $k >_\tau i$, and is equal to 0 otherwise (also recall that $\th_0 = 1$ by convention).  

In particular, though the dimensions of the symplectic leaves of $\wt{LG}^{u,v}$ depend on the specific choice of $u$ and $v$, our computations of the bracket on $\wt{LG}_{\mb{i}}/H$ imply the following:

\begin{prop}\label{prop:leafdim}
The symplectic leaves of $\widetilde{LG}_\mb{i}/H$ are of dimension $2r+2$, and $Q^2(\prod_k S_k^{-\th_k})$ is a Casimir.
\end{prop}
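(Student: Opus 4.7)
The plan is to prove both claims by direct computation in the explicit brackets of \eqref{eq:explicitbrackets}, exploiting the fact that every nonzero bracket among the generators $T_k, S_k, Q$ is ``log-constant'': $\{X,Y\} = c_{XY}\,XY$ for a scalar $c_{XY}$.  Thus the Poisson tensor is a constant-coefficient bivector in logarithmic coordinates, of constant rank equal to the rank of the scalar antisymmetric matrix $M = (c_{XY})$ throughout the chart \eqref{eq:coords}.

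To verify that $F := Q^2 \prod_k S_k^{-\th_k}$ is a Casimir, I would apply the Leibniz rule to $\{F,T_j\}$, $\{F,S_j\}$, and $\{F,Q\}$ in turn.  The bracket with $T_j$ collapses immediately: only the contributions of $\{Q,T_j\}$ and $\{S_j,T_j\}$ are nonzero, and they cancel via $\th_j \cdot 2d_j = 2d_j \th_j$.  The brackets $\{F,Q\}$ and $\{F,S_j\}$ both reduce, after organizing terms, to the vanishing of sums of the form
\[
\sum_k \th_k \sum_{i \ne k} \th_i\, d_k C_{ki}\bigl([i >_\si k >_\tau i]-[i >_\tau k >_\si i]\bigr)
\]
(and an analogous single-index $S_j$-variant).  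The two key inputs are the symmetrizability relation $d_k C_{ki} = d_i C_{ik}$ and the tautology $[i >_\si k >_\tau i] = [k >_\tau i >_\si k]$---both sides express the conjunction ``$i >_\si k$ and $k >_\tau i$''.  Together they make the summand antisymmetric under $i \leftrightarrow k$, forcing the sum to vanish.

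For the leaf dimension, the nonconstant Casimir $F$ already gives the upper bound $2r+2$.  For the matching lower bound I would exhibit an explicit nondegenerate $(2r+2)\times(2r+2)$ submatrix of $M$, namely the principal submatrix indexed by $\{T_0,\dots,T_r,S_0,\dots,S_r\}$ (i.e.\ dropping the row and column of $Q$).  By \eqref{eq:explicitbrackets} its $T$-$T$ block vanishes and its mixed $T$-$S$ block is the nondegenerate diagonal matrix $-2\,\mathrm{diag}(d_0,\dots,d_r)$, so a block-matrix determinant computation gives the nonzero value $\pm 4^{r+1}(\prod_i d_i)^2$, establishing rank exactly $2r+2$.

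The main obstacle is the combinatorial step: recognising the tautology $[i >_\si k >_\tau i] = [k >_\tau i >_\si k]$ and combining it with symmetrizability to produce the $i \leftrightarrow k$ antisymmetry that annihilates the obstruction sum.  Once that is in hand, the Leibniz-rule bookkeeping and the block determinant are essentially mechanical.
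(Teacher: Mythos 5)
Your proposal is correct and is essentially the computation the paper itself appeals to: the paper offers no separate argument, simply asserting that the proposition follows from the explicit brackets \eqref{eq:explicitbrackets}, and your log-constant-rank argument plus the $i \leftrightarrow k$ antisymmetry (via $d_kC_{ki}=d_iC_{ik}$ and $[i >_\si k >_\tau i]=[k >_\tau i >_\si k]$) fills in exactly those details. The only cosmetic remark is that $\{F,S_j\}$ vanishes term-by-term, since the coefficient of $\{Q,S_j\}$ is by construction $\tfrac12\sum_i \th_i$ times the coefficient of $\{S_i,S_j\}$, so the antisymmetry trick is only genuinely needed for $\{F,Q\}$.
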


\section{Affine Coxeter-Toda Systems} \label{sec:acts}

We now apply the results of the preceding sections to construct integrable systems on the reduced Coxeter double Bruhat cells of $\wt{LG}$.   We will use the factorization coordinates described in \cref{sec:brackets} to work explicitly with their Hamiltonians, and to identify the relativistic periodic Toda system as an example of the construction.  

\subsection{Complete Integrability}

We first recall the following definition:

\begin{defn}\label{defn:intsys}
A completely integrable Hamiltonian system on an affine Poisson variety is a collection of Poisson-commuting functions $H_1, \dots, H_n$ whose associated Hamiltonian vector fields are generically independent, and whose number is half the dimension of a generic symplectic leaf (this is the maximum possible number given the independence requirement).
\end{defn}

Invariant functions on $\wt{LG}$ Poisson commute with each other by \cref{prop:commuting}, and we will construct such functions as follows.  Any regular function on $G$ can be pulled back along the evaluation map $\wt{LG} \times \CC^* \to G$ to a regular function on $\wt{LG} \times \CC^*$.  Choosing a coordinate $z$ on $\CC^*$ identifies the coordinate ring of $\wt{LG} \times \CC^*$ with the set of regular maps $\wt{LG} \to \CC[z^{\pm 1}]$.  If our original function on $G$ is the character of a representation $V$, we refer to the resulting map $\wt{LG} \to \CC[z^{\pm 1}]$ as the \emph{evaluation character} of $V$.  The coefficient of any power of $z$ in an evaluation character is then an invariant scalar function on $\wt{LG}$.  

Together, all such coefficients of evaluation characters provide an infinite collection of pairwise Poisson-commuting functions on $\wt{LG}$.  Thus a natural strategy for constructing integrable systems is to restrict these functions to the double Bruhat cells of $\wt{LG}$.  On a general cell, however, it may be that too few of these functions remain independent to form a maximal set of Poisson-commuting functions.  Our main theorem provides a sufficient condition for obtaining an integrable system this way, or more precisely after reducing by the conjugation action of $H$.

\begin{thm}\label{thm:intmain}
The reduced Coxeter double Bruhat cell $\wt{LG}^{u,v}/H$ is the phase space of an integrable system whose Hamiltonians $H_1,\dots,H_{r+1}$ are coefficients of evaluation characters.  We take $H_1,\dots,H_r$ to be the constant coefficients of the evaluation characters of the $r$ fundamental representations of $G$, and $H_{r+1}$ to be the $z$-linear coefficient of the evaluation character of a certain representation $V$.  This is the irreducible representation whose highest weight is in the $W$-orbit of $\mu := - \sum_{k\neq 0}(\th_k + \sum_{j >_\si k}\th_j C_{kj})\om_k$, where the $\om_k$ are the fundamental dominant weights of $G$ and $\th_0=1$.  
\end{thm}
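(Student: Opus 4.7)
The plan is to verify four points: pairwise Poisson commutativity of $H_1,\dots,H_{r+1}$, their descent to $\wt{LG}^{u,v}/H$, a dimension count matching the integrability requirement, and generic independence of the $H_i$ on a symplectic leaf. The first three are immediate from the theory already developed. Each $H_i$ is a $z$-coefficient of an evaluation character, i.e., the pullback of a conjugation-invariant function on $G$ along an evaluation homomorphism $\wt{LG}\to G$. Conjugation-invariance yields pairwise Poisson commutativity by \cref{prop:commuting} and $H$-invariance, so the $H_i$ descend to $\wt{LG}^{u,v}/H$. By \cref{prop:leafdim} the generic symplectic leaf there has dimension $2r+2$, so a completely integrable system requires exactly $r+1$ independent commuting functions.

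For generic independence, I would work in the coordinates $(T_0,\dots,T_r,S_0,\dots,S_r,Q)$ of \eqref{eq:coords} together with the Casimir $Q^2\prod_k S_k^{-\theta_k}$. The key structural observation is that the $z$-dependence of an element $\gamma(z)\in\wt{LG}_{\mb{i}}$ enters only through the two affine factors $\phi_0(g_0)$ (contributing $z$) and $\phi_0(g'_0)$ (contributing $z^{-1}$); the remaining Chevalley factors land in constant loops. Consequently the constant-in-$z$ Hamiltonians $H_1,\dots,H_r$ restrict to fundamental characters of $G$ evaluated at an explicit element of $H\subset G$ whose entries are Laurent monomials in the $T_i$, up to contributions from the two affine factors. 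Algebraic independence of the fundamental characters of $G$ as functions on its Cartan then yields $r$ independent differentials among $dH_1,\dots,dH_r$ in the $T$-directions.

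For the independence of $H_{r+1}$, I would trace how a weight-$\mu$ vector of $V$ is transported to the highest-weight vector through the factorization $\phi_{\sigma(0)}(g_{\sigma(0)})\cdots\phi_{\tau(r)}(g'_{\tau(r)})$, picking up exactly one power of $z$ from one application of $e_0$ or $f_0$. The specific form of $\mu$ in the statement is the combinatorial shadow of this transport: the coefficient $\theta_k+\sum_{j>_\sigma k}\theta_j C_{kj}$ of $\omega_k$ is precisely the number of times the generator $e_k$ (or $f_k$) must act on a weight vector as one pushes through the ordered Coxeter factors, given that the single $e_0$ or $f_0$ action absorbs the factor of $z$. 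The resulting leading monomial of the $z^1$ coefficient of $\chi_V$ then involves some $S_k$ or $Q$ with nonzero exponent, so $dH_{r+1}$ has a nontrivial component outside the span of $dT_0,\dots,dT_r$. The $(r+2)\times(2r+3)$ Jacobian of $(H_1,\dots,H_{r+1},Q^2\prod_k S_k^{-\theta_k})$ with respect to all $2r+3$ coordinates then has full rank $r+2$ at a generic point, establishing independence on a symplectic leaf.

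The hard part will be the last step, specifically verifying that the leading $z^1$ monomial of $\chi_V$ in the reduced factorization coordinates does not vanish by cancellation against contributions from other weight paths through the factorization. The precise form of $\mu$ is dictated by this analysis: it is the Weyl-orbit representative of the unique weight whose highest-weight-to-$\mu$ path through the ordered Coxeter factors picks up a single factor of $z$ and yields a ``pure'' monomial in the quotient coordinates $T_i, S_i, Q$. Once this leading term is exhibited, the independence of the Hamiltonians on a generic symplectic leaf, and hence complete integrability, follows from the Jacobian computation.
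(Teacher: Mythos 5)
Your outline follows the paper's proof closely: the same dimension count via \cref{prop:leafdim}, commutativity via \cref{prop:commuting}, the observation that the $z$-grading is controlled by the two affine factors (in the paper: the power of $z$ on a monomial equals the difference of the exponents of $B_0$ and $C_0$, and $Q$ is the only generator of $\CC[\wt{LG}_{\mb{i}}/H]$ with unequal $B_0$ and $C_0$ exponents, so the $z^k$-coefficient of any evaluation character has degree exactly $k$ in $Q$), and the reduction of the independence of $H_1,\dots,H_r$ to the independence of the fundamental characters on the Cartan. But the step you flag as ``the hard part'' --- that the $Qz$ term of the evaluation character of $V$ survives without cancellation --- is precisely where the content of the proof lies, and your proposal leaves it open. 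The paper closes it in two moves. First, \cref{lem:mu} computes $s_{\si(j)}\cdots s_{\si(r)}(\mu)=\mu+\sum_{k\ge j}\th_{\si(k)}\al_{\si(k)}$ for all $j$ (in particular $s_{\si(0)}\cdots s_{\si(r)}(\mu)=\mu$); this guarantees that each intermediate weight space along the path is nonzero and that transporting a weight-$\mu$ vector through the ordered factors $\phi_{\si(0)}(g_{\si(0)})\cdots\phi_{\si(r)}(g_{\si(r)})$ produces a contribution to the $z$-linear part of $H_\mu:=\tr_{V_\mu}(\pi_\mu\circ g)$ containing a monomial whose $B$-content is exactly $B_0\prod_{i\ne0}B_i^{\th_i}$ and which is independent of the $A_i$, forcing it to be a nonzero scalar multiple of $Q$. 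Second --- and this is what rules out cancellation --- a monomial proportional to $Qz$ can only arise from the diagonal block $H_\la$ with $\la=\mu$, so no other weight space contributes a cancelling term. Your heuristic that $\mu$ is the weight whose path picks up a single factor of $z$ and yields a pure monomial is the right intuition, but without the computation in \cref{lem:mu} (which hinges on the nontrivial identity $\langle\mu+\sum_{k>_\si 0}\th_k\al_k\,|\,h_0\rangle=-1$) the argument is not complete.

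One smaller point: your claim that independence of the fundamental characters on the Cartan ``yields $r$ independent differentials among $dH_1,\dots,dH_r$ in the $T$-directions'' is not immediate, because the correction terms also depend on the $T_i$. The paper instead argues for algebraic independence: writing $H_i=J_i+K_i$ with $J_i$ of degree zero in the $S_i$ and $K_i$ of strictly positive degree (positivity uses that $H_i$ extends over $B_j,C_j\to0$), any polynomial relation $p(H_1,\dots,H_r)=0$ forces $p(J_1,\dots,J_r)=0$ by extracting the $S$-degree-zero part, and the $J_i$ are exactly the fundamental characters restricted to $H$. Generic independence of differentials then follows in characteristic zero, and your final Jacobian computation with the Casimir $Q^2\prod_kS_k^{-\th_k}$ adjoined is an acceptable substitute for the paper's closing step that algebraic independence of the $H_i$ implies generic independence of their Hamiltonian vector fields.
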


Note that in the statement of the theorem we could have taken $V$ to be any sufficiently large representation.  The given choice is essentially the minimal possible choice to ensure that $H_{r+1}$ restricts nontrivially to $\wt{LG}^{u,v}/H$. 

\begin{proof}\
By \cref{prop:leafdim} the symplectic leaves of $\wt{LG}^{u,v}/H$ are $(2r+2)$-dimensional, so the stated functions will form an integrable system once we show that their Hamiltonian vector fields remain independent when restricted to $\wt{LG}^{u,v}/H$.  Since $\wt{LG}_\mb{i}$ is dense in $\wt{LG}^{u,v}$ it suffices to consider their restrictions to $\wt{LG}_\mb{i}/H$, where we can use the explicit coordinates given by \cref{eq:coords}.  

First we show that $H_{r+1}$ is nonzero when restricted to $\wt{LG}^{u,v}/H$.  We can compute the evaluation character of $V$ by decomposing the action of $g$ with respect to a weight basis.  Specifically, let $V_\la$ be the $\la$-weight space of $V$, $\pi_\la$ the projection of $V$ onto $V_\la$ given by the weight space decomposition, and $H_\la$ the regular function defined by $H_\la(g) := \tr_{V_\la}(\pi_\la \circ g)$.  Then $H_{r+1} = \sum H_{\la}$, where the sum runs over the nonzero weight spaces of $V$.

Recall that for any $g \in \wt{LG}_\mb{i}$ we have the factorization
\begin{gather}\label{eq:gfact}
g = \phi_{\si(0)}(g_{\si(0)}) \dots \phi_{\si(r)}(g_{\si(r)}) \phi_{\tau(0)}(g'_{\tau(0)}) \dots \phi_{\tau(r)}(g_{\tau(r)}'),
\end{gather}
where
\begin{equation*}
g_i = \begin{pmatrix} A_i & B_i \\ 0 & A_i^{-1} \end{pmatrix}, \quad
g'_i = \begin{pmatrix} D_i^{-1} & 0 \\ C_i & D_i \end{pmatrix}.
\end{equation*}
From \cref{lem:mu} we conclude that the weight spaces in $V$ of weight $\mu + \sum_{k \geq j}\th_{\si(k)}\al_{\si(k)}$ are nonzero for all $j$.  From this and \cref{eq:gfact} we see that for any $v \in V_\mu$, the component of $\phi_{\si(j)}(g_j) \dots$ $\dots \phi_{\si(r)}(g_r) \cdot v$ of weight $\mu + \sum_{k \geq j}\th_{\si(k)}\al_{\si(k)}$ is nonzero for all $j$.  Since $s_{\si(0)}\dots s_{\si(r)}(\mu)=\mu$, it follows that the $z$-linear term of $H_\mu$ contains a monomial whose $B_i$ components are exactly $B_0(\prod_{i\neq0}B_i^{\th_i})$.  One can compute from the weight spaces involved that this monomial does not depend on the $A_i$.  By inspecting the generators of $\CC[\wt{LG}_{\mb{i}}/H]$ from \cref{eq:coords} we conclude that this monomial must be a scalar multiple of $Q$.  In particular $H_\mu$ can be written as a sum of scalar multiple of $Qz$ and other terms not of this form.  The reader may check using \cref{eq:gfact} that $H_\la$ cannot contain any scalar multiple of $Qz$ unless $\la = \mu$.  In particular, the $z$-linear term of the evaluation character is nonzero, since we have ruled out any cancellation of the $Qz$.

The independence of $H_{r+1}$ and the remaining Hamiltonians follows from the fact that the restriction of $H_{r+1}$ to $\wt{LG}_{\mb{i}}/H$ is linear in $Q$, while the other Hamiltonians do not depend on $Q$.  Indeed, suppose $M$ is any monomial in the restriction of an evaluation character to $\wt{LG}_{\mb{i}}/H$.  It is straightforward to see that the power of $z$ accompanying $M$ is the difference of the exponents of $B_0$ and $C_0$ in $M$.  Since $Q$ is the only generator of $\CC[\wt{LG}_{\mb{i}}/H]$ whose powers of $B_0$ and $C_0$ are distinct, it follows that the $z^k$-term of an evaluation character has degree $k$ with respect to $Q$.  

Finally, we claim that the Hamiltonians $H_1,\dots,H_r$ are algebraically independent.  Decompose each $H_i$ as $J_i + K_i$, where $J_i$ has degree zero with respect to the $S_i$, and $K_i$ is a sum of monomials of nonzero degree in the $S_i$.  Since $H_i$ is the restriction of a function on $\wt{LG}$, $\lim_{B_j,C_j \to 0} H_i$ exists for all $j$, so these monomials are in fact of positive degree in the $S_i$.

We claim that the $J_i$ are independent.  The projection $\wt{H} \to H$ induces an inclusion $\CC[H] \subset \CC[\wt{H}]$, and we identify $\CC[\wt{H}]$ with $\CC[T^{\pm 1}_0,\dots ,T^{\pm 1}_r]$ in the obvious way.  Then restricting the characters of the $i$ fundamental representations to $H$ and including them in $\CC[T^{\pm 1}_0,\dots ,T^{\pm 1}_r]$, we obtain exactly the functions $J_i$; it is a standard result that the restrictions of the fundamental characters to $H$ are independent.

Now suppose there is some polynomial relation among the $H_i$.  That is, for some polynomial $p$ in $r$ variables we have $p(H_1,\dots,H_r)=0$.  For any polynomial $p$ we can consider the decomposition of $p(H_1,\dots,H_r)$ into a component of degree zero in the $S_i$ and a component which depends nontrivially on the $S_i$.  But the $K_i$ are all of strictly positive degree in the $S_i$, hence the degree zero part of $p(H_1,\dots,H_r)$ is exactly $p(J_1,\dots,J_r)$.  Thus $p(H_1,\dots,H_r)=0$ implies $p(J_1,\dots,J_r)=0$, so $p$ must be identically zero.  Finally, one can check using \cref{eq:explicitbrackets,prop:leafdim} that for the Hamiltonians $H_1,\dots,H_{r+1}$, their algebraic independence implies the generic independence of their Hamiltonian vector fields.
\end{proof}

\begin{lemma}\label{lem:mu}
We have $s_{\si(j)}\dots s_{\si(r)}(\mu)=\mu + \sum_{k \geq j}\th_{\si(k)}\al_{\si(k)}$ for all $j$.  Here $s_0$, $\al_0$ are understood as $s_{\th}$, $-\th$ rather than affine simple roots. In particular, $s_{\si(0)}\dots s_{\si(r)}(\mu)=\mu$, since $\th_0 \al_0 = -\sum_{i\neq 0} \th_i \al_i$.
\end{lemma}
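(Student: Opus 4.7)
The plan is to prove the formula by downward induction on $j$, starting from the trivial base case $j = r+1$ (empty sum gives $\mu$) and working down to $j = 0$. Writing $\nu_j := \mu + \sum_{k \geq j}\th_{\si(k)}\al_{\si(k)}$, the inductive step reduces to $s_{\si(j)}(\nu_{j+1}) = \nu_{j+1} + \th_{\si(j)}\al_{\si(j)}$. Because $v = s_{\si(0)}\cdots s_{\si(r)}$ is a Coxeter element, the map $k \mapsto \si(k)$ is a bijection from positions to indices, so positions $k > j$ correspond exactly to indices $m$ with $m >_\si l$, where $l := \si(j)$; hence $\nu_{j+1} = \mu + \sum_{m >_\si l}\th_m \al_m$.

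Using the reflection formula $s_\be(\la) = \la - \langle \la, h_\be\rangle \be$, what remains is the pairing identity $\langle \nu_{j+1}, h\rangle = -\th_l$, where $h = h_l$ if $l \neq 0$ and $h = -h_\th$ if $l = 0$ (the coroot of $\al_0 = -\th$). The case $l \neq 0$ is dispatched uniformly by the observation that $\langle \al_m, h_l\rangle = C_{lm}$ for all $m \in \{0, \dots, r\}$: for $m \neq 0$ this is the defining relation of the Cartan matrix, while for $m = 0$ it becomes $\langle -\th, h_l\rangle = -\th(h_l) = C_{l,0}$ by the paper's relation $C_{l,0} = -\th(h_l)$. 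Substituting this and the formula $\langle \mu, h_l\rangle = -(\th_l + \sum_{m >_\si l}\th_m C_{lm})$ from the definition of $\mu$, the summations cancel and we obtain $-\th_l$ as required.

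The case $l = 0$ is the main obstacle: the pairing is with $h_\th$ rather than a simple coroot, so we use $\langle \om_k, h_\th\rangle = \th_k^\vee$ (the comarks) and the analogous relation $\langle \al_m, h_\th\rangle = -C_{0,m}$ (from $C_{0,m} = -\al_m(h_\th)$). The required identity $\langle \nu_{j+1}, h_\th\rangle = 1$ then becomes a combinatorial relation among marks, comarks, and extended Cartan matrix entries indexed by the ordering $\si$. To verify it, expand the definition of $\mu$ and apply the auxiliary identity $\sum_{m=0}^r \th_m \al_m = 0$ (equivalent to $\th = \sum_{l \neq 0}\th_l \al_l$ under $\al_0 = -\th$, $\th_0 = 1$) to control the partial sums; the key cancellation comes from $\sum_{m \neq 0}\th_m C_{0,m} = -2$. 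Finally, the \textbf{in particular} statement follows from the $j = 0$ case: the total sum $\sum_{k=0}^r \th_{\si(k)}\al_{\si(k)} = \sum_{l=0}^r \th_l\al_l = \th_0\al_0 + \sum_{l \neq 0}\th_l\al_l = -\th + \th = 0$.
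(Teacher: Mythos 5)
Your proposal is correct and follows essentially the same route as the paper: downward induction on $j$, reducing each step to a pairing computation that is immediate from the definition of $\mu$ when $\si(j)\neq 0$ and requires the extra identity when $\si(j)=0$. The one place you leave as a sketch --- verifying $\langle \nu_{j+1}, h_\th\rangle = 1$ --- is exactly the computation the paper writes out in full; it closes as you indicate, using symmetrizability ($d_kC_{kj}=d_jC_{jk}$) to combine the $\si$-dependent partial sums with the normalization $\langle\th|h_\th\rangle=2$, which is your identity $\sum_{m\neq 0}\th_m C_{0,m}=-2$.
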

\begin{proof}[Proof of \cref{lem:mu}]
We induct on $j$: assuming the statement for $j+1$ we compute that
\begin{align*}
s_{\si(j)}\dots s_{\si(r)}(\mu) &= s_{\si(j)}(\mu + \sum_{k > j}\th_{\si(k)}\al_{\si(k)}) \\
&= (\mu + \sum_{k > j}\th_{\si(k)}\al_{\si(k)}) - \langle \mu + \sum_{k > j}\th_{\si(k)}\al_{\si(k)} | h_{\si(j)} \rangle \al_{\si(j)} \\
&= \mu + \sum_{k \geq j}\th_{\si(k)}\al_{\si(k)} \\
\end{align*}
For $\si(j) \neq 0$ the last equality follows from the definition of $\mu$, while for $\si(j) = 0$ it follows from calculating that:
\begin{align*}
\langle \mu + \sum_{k >_{\si} 0}\th_k\al_k | h_0 \rangle &= \langle \mu + \sum_{k >_{\si} 0}\th_k\al_k | - \sum_{k \neq 0} d_k \th_k h_k \rangle \\
&= \sum_{k \neq 0}d_k\th_k(\th_k + \sum_{j >_\si k}\th_j C_{kj}) - \sum_{\substack{k \neq 0 \\ j >_\si 0}}d_k \th_k \th_j C_{kj} \\
&= \sum_{k \neq 0}d_k\th_k(\th_k + \sum_{\substack{j \neq 0 \\ j >_\si k}}\th_j C_{kj}) + \sum_{k <_\si 0}d_k\th_k C_{k0} - \sum_{\substack{k \neq 0 \\ j >_\si 0}}d_k \th_k \th_j C_{kj} \\
&= \frac12\sum_{j,k \neq 0}d_k \th_j \th_k C_{kj} - \sum_{\substack{j \neq 0 \\ k <_\si 0}}d_k\th_j \th_k C_{kj} - \sum_{\substack{k \neq 0 \\ j >_\si 0}}d_k \th_k \th_j C_{kj} \\
& = -1. \\
\end{align*}
Here we use the fact that $\sum_{j,k \neq 0}d_k \th_j \th_k C_{kj} = \langle \th | h_\th \rangle = 2$, $C_{k0} = - \sum_{j\neq0}\th_j C_{kj}$, and $C_{kk} = 2$.
\end{proof}

\begin{remark}
Even for double Bruhat cells on which there are too few independent coefficient functions to obtain an integrable system, it was shown in \cite{Reshetikhin2003} that in the finite-dimensional case one obtains superintegrable systems.  This is a stronger statement than simply having a collection of Poisson-commuting functions.  In particular, the dynamics are restricted to isotropic analogues of Liouville tori.  One expects this to hold in the affine case as well, but we do not pursue this here.
\end{remark} 

\subsection{The Relativistic Periodic Toda System}\label{sec:toda}

We now show that the relativistic periodic Toda system of \cite{Ruijsenaars1990} can be realized (up to symplectic reduction) as an affine Coxeter-Toda system of type $A_n^{(1)}$ for a natural choice of Coxeter elements.  In canonical coordinates $p_k, q_k$ this system corresponds to the Hamiltonian
\begin{equation}\label{eqn:rtshamiltonian}
\sum_{k=0}^m e^{hp_k}(1 + h^2 exp(q_{k+1} - q_k)),
\end{equation}
where $h$ is a nonzero parameter and we impose the periodic boundary conditions $p_{k+m+1}=p_k$, $q_{k+m+1}=q_k$ \cite{Suris1991}.  For now we consider the complex form where $p_k$ and $q_k$ take values in $\CC$.

Consider the double Bruhat cell of $\wt{LSL}_n$ with $u$ and $v$ both equal to the element $s_0 s_1 \cdots s_n$, where the simple roots of $SL_n$ are numbered in the usual way.  We note that from the computations in \cref{sec:brackets} it follows that the symplectic leaves of this cell are already $(2r+2)$-dimensional, so the corresponding Coxeter-Toda system is integrable before reduction by $H$.  

If $H_1 \in \CC[(\wt{LSL}_n)_{\mb{i}}]$ is the Hamiltonian obtained from the constant term of the character of the defining representation of $SL_n$, a simple calculation yields that
\begin{equation}\label{eqn:actshamiltonian}
H_1 =  \sum_{i=0}^n T_i T_{i-1}^{-1}(1 + S_i),
\end{equation}
where $T_{-1}$ and $S_{-1}$ are read as $T_n$ and $S_n$.

To connect this with the relativistic Toda system, we introduce auxiliary variables ${c_0, \dots, c_n, d_0, \dots, d_n}$, on which we define a Poisson structure by setting
\[
\{c_k, d_k\} = 2 c_k d_k, \quad \{c_k, d_{k+1}\} = -2 c_k d_{k+1}, \quad \{c_k, c_{k+1} \} = -2 c_k c_{k+1},
\]
with all other brackets among the generators equal to zero (here $d_{n+1}$ and $c_{n+1}$ are understood as $d_0$ and $c_0$).  The algebra $\CC[c_0^{\pm 1}, d_0^{\pm 1},\dots,c_n^{\pm 1}, d_n^{\pm 1}]$ is then the coordinate ring of a $(2n+2)$-dimensional Poisson torus with $2n$-dimensional symplectic leaves.  

Now observe that this Poisson variety can be obtained as a reduction of both $(\wt{LSL}_n)_{\mb{i}}$ and the phase space of the relativistic Toda system (for $m = n$ and $h=2$). That is, we have surjective Poisson maps given by
\[c_i \mapsto S_i T_i T_{i-1}^{-1}, \quad d_i \mapsto T_i T_{i-1}^{-1} \quad \mathrm{and} \quad 
c_i \mapsto 4e^{2p_i - q_i + q_{i+1}}, \quad d_i \mapsto e^{2p_i}.
\]
Moreover, the following proposition is clear from \cref{eqn:rtshamiltonian,eqn:actshamiltonian}:
\begin{prop} The Hamiltonian
\[
H_1 = \sum_{i=0}^n c_i + d_i
\]
pulls back to the Hamiltonians of the relativistic Toda and Coxeter-Toda systems under the maps given above, hence defines a Hamiltonian system which is a common reduction of these two integrable systems.
\end{prop}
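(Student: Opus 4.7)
The plan is to prove the proposition by direct substitution, since once the two maps are known to be surjective Poisson (as asserted in the paragraph preceding the statement) the claim reduces to two parallel calculations. I would handle the Coxeter-Toda pullback first: substituting $c_i \mapsto S_i T_i T_{i-1}^{-1}$ and $d_i \mapsto T_i T_{i-1}^{-1}$ into $H_1 = \sum_{i=0}^n (c_i + d_i)$ immediately yields $\sum_{i=0}^n T_i T_{i-1}^{-1}(1 + S_i)$, which matches the explicit form of $H_1$ on $(\wt{LSL}_n)_{\mb{i}}$ recorded in \cref{eqn:actshamiltonian}.

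I would then repeat the substitution for the relativistic Toda side: sending $c_i \mapsto 4 e^{2p_i - q_i + q_{i+1}}$ and $d_i \mapsto e^{2p_i}$ produces $\sum_{k=0}^n e^{2p_k}(1 + 4 e^{q_{k+1} - q_k})$, which is precisely the Hamiltonian of \cref{eqn:rtshamiltonian} at $h=2$. The periodic boundary conditions $p_{k+n+1} = p_k$, $q_{k+n+1} = q_k$ correspond to the cyclic indexing $T_{-1} = T_n$, $S_{-1} = S_n$ on the Coxeter-Toda side, so the two sums are indexed compatibly. The final assertion that the Poisson torus with Hamiltonian $H_1$ is a common reduction of the two parent integrable systems then follows formally: a surjective Poisson map sends the Hamiltonian flow of a pulled-back function to the Hamiltonian flow of the function downstairs, and we have just identified the two relevant pullbacks.

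The main obstacle, implicit but not spelled out in the excerpt, is the verification that the two displayed maps are in fact Poisson. For the Coxeter-Toda side I would check the three nontrivial defining brackets $\{c_k, d_k\} = 2c_k d_k$, $\{c_k, d_{k+1}\} = -2 c_k d_{k+1}$, $\{c_k, c_{k+1}\} = -2 c_k c_{k+1}$ (and vanishing of all others) by expanding in terms of $S_i, T_i$ and applying \cref{eq:explicitbrackets} in type $A_n^{(1)}$ with $u = v = s_0 s_1 \cdots s_n$. In this setting all $d_i = 1$ and $\si = \tau$ is the identity permutation, which forces $[i >_\si k >_\tau i] = [i >_\tau k >_\si i]$ and hence $\{S_i, S_k\} = 0$; all of the $c$-$d$ and $c$-$c$ brackets are then produced purely by the relations $\{S_i, T_k\} = 2 S_i T_i \delta_{i,k}$, with the $T_i T_{i-1}^{-1}$ factors contributing nothing since the Cartan generators Poisson commute.

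For the relativistic Toda side, with the canonical bracket $\{q_i, p_j\} = \delta_{ij}$ underlying the model of \cite{Ruijsenaars1990,Suris1991}, the same three nontrivial brackets follow from the exponential Leibniz rule $\{e^f, e^g\} = e^{f+g}\{f,g\}$ applied to the explicit exponents $2p_i - q_i + q_{i+1}$ and $2p_i$. None of this bookkeeping is conceptually difficult — it is what the author has in mind when he calls the proposition ``clear from \cref{eqn:rtshamiltonian,eqn:actshamiltonian}'' — but these two bracket calculations are the actual mathematical content, the subsequent pullback of $H_1$ being pure substitution.
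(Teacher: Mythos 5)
Your proposal is correct and follows the same route as the paper, whose proof is literally the observation that the identity is ``clear from'' comparing \cref{eqn:rtshamiltonian} (at $h=2$) with \cref{eqn:actshamiltonian}: both substitutions check out exactly as you describe. The one thing you add beyond the paper is the verification that the two displayed maps are Poisson, which the paper asserts without proof in the preceding paragraph; your bracket computation on the Coxeter--Toda side is right (in type $A_n^{(1)}$ all $d_i=1$, $\{T_i,T_k\}=0$, $\{S_i,S_k\}=0$ since $\sigma=\tau$, and the three nonzero $c$--$d$ and $c$--$c$ brackets come entirely from $\{S_i,T_i\}=2S_iT_i$). The only slip is on the relativistic Toda side: with your stated convention $\{q_i,p_j\}=\delta_{ij}$ all three brackets come out with the opposite sign (e.g.\ $\{c_k,d_k\}=-2c_kd_k$), so you need the convention $\{p_i,q_j\}=\delta_{ij}$ for the map to be Poisson rather than anti-Poisson; since the paper never fixes this sign, this is a convention choice rather than a genuine gap.
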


Finally, we recall that the relativistic Toda system is usually defined on the real phase space with canonical coordinates $p_k$, $q_k$.  Because of the exponentials in the Hamiltonian, the corresponding real slice of the Coxeter-Toda phase space is the subset of $(\wt{LSL}_n)_{\mb{i}}$ on which the factorization coordinates take \emph{positive} real values.  This totally positive part of the double Bruhat cell has many interesting combinatorial properties and was the principal motivation for \cite{Fomin1999}.  Thus in the present context we find that total positivity arises naturally when we compare our construction with the usual real form of the relativistic Toda system.

\bibliography{DBCIS_arxiv} 
\bibliographystyle{alpha}

\end{document}